\title[Koszul duality and tilting equivalence]{On a common generalization of Koszul duality and tilting equivalence}
\author{Dag Madsen}
\address{Department of Mathematics, 215 Carnegie, Syracuse University,
Syracuse, NY 13244-1150, USA}
\email{dmadsen@syr.edu}
\keywords{Generalized Koszul algebras, extension algebras}
\subjclass[2000]{16W50, 16S37, 18E30}
\newtheorem{lem}{Lemma}[subsection]
\newtheorem{prop}[lem]{Proposition}
\newtheorem{cor}[lem]{Corollary}
\newtheorem{thm}[lem]{Theorem}
\newtheorem*{cor2}{Corollary}
\theoremstyle{definition}
\newtheorem{defin}[lem]{Definition}
\newtheorem{example}[lem]{Example}
\newtheorem*{conj}{Conjecture}
\newtheorem*{ques}{Question}
\newtheorem*{defin2}{Definition}
\newcommand{\G}{\Gamma}
\newcommand{\K}{\operatorname{\mathsf{K}}}
\renewcommand{\L}{\Lambda}
\newcommand{\Z}{\mathbb Z}
\newcommand{\add}{\operatorname{\mathsf{add}}}
\newcommand{\cgr}{\operatorname{\mathsf{cgr}}}
\newcommand{\cK}{\operatorname{\mathsf{cK}}}
\newcommand{\cone}{\operatorname{cone}}
\newcommand{\End}{\operatorname{End}}
\newcommand{\Ext}{\operatorname{Ext}}
\newcommand{\gldim}{\operatorname{gldim}}
\newcommand{\Gr}{\operatorname{\mathsf{Gr}}}
\newcommand{\gr}{\operatorname{\mathsf{gr}}}
\newcommand{\Hom}{\operatorname{Hom}}
\newcommand{\id}{\operatorname{id}}
\newcommand{\lfd}{\operatorname{\mathsf{lfd}}}
\renewcommand{\mod}{\operatorname{\mathsf{mod}}}
\newcommand{\Mod}{\operatorname{\mathsf{Mod}}}
\newcommand{\op}{\operatorname{op}}
\newcommand{\pd}{\operatorname{pd}}
\newcommand{\soc}{\operatorname{soc}}
\renewcommand{\top}{\operatorname{top}}
\newcommand{\Tor}{\operatorname{Tor}}
\newcommand{\gsh}[1]{\langle #1 \rangle}
\newcommand{\ggen}[1]{\{ #1 \}}
\begin{document}

\maketitle

\begin{abstract}
We propose a new definition of Koszulity for graded algebras where the degree zero part has finite global dimension, but is not necessarily semi-simple. The standard Koszul duality theorems hold in this setting. We give an application to algebras arising from multiplicity free blocks of the BGG category $\mathcal O$.
\end{abstract}

\section*{Introduction}
Koszul algebras have traditionally been required to be graded algebras with semi-simple degree zero part, but Green, Reiten, and Solberg in \cite{TKos} defined a notion of Koszulity for more general graded algebras where the degree zero part is allowed to be arbitrary finite dimensional. The purpose of their work was to develop a unified approach to Koszul duality and tilting equivalence. Koszulity of an algebra $\L$ was defined with respect to a module $T$, leading to the name \emph{$T$-Koszul algebras}. In \cite{Ext}, we gave an account of the $T$-Koszul algebras of Green, Reiten, and Solberg from a derived category perspective. In this subject the right concepts seem to have been found, but in both papers mentioned the basic definitions have been a little too elaborate. We propose here a new definition of $T$-Koszul algebras that is more elementary and accessible.
\begin{defin2}[\ref{nydef}]
Let $\L=\bigoplus_{i\geq 0} \L_i$ be a graded algebra with $\gldim \L_0 < \infty$, and let $T$ be a graded $\L$-module concentrated in degree zero.
We say that $\L$ is \emph{$T$-Koszul} if both of the following conditions hold.
\begin{itemize}
\item[(i)] $T$ is a tilting $\L_0$-module.
\item[(ii)] $T$ is graded self-orthogonal as a $\L$-module.
\end{itemize}
\end{defin2}

We prove that the new definition is equivalent to the one given in \cite{Ext}, provided $\gldim \L_0 < \infty$ (Theorem \ref{comp}). As a consequence the three main theorems for Koszul algebras hold in this setting, namely, each $T$-Koszul algebra has a $T$-Koszul dual algebra which is the extension algebra of $T$ (Theorem \ref{kosdua}(a)), the $T$-Koszul dual of the $T$-Koszul dual algebra is isomorphic to the original algebra (Theorem \ref{kosdua}(b)), and there is a duality between categories of $T$-Koszul modules over $T$-Koszul dual algebras (Theorem \ref{duamod}). In subsection \ref{inftysec} we comment on the situation when $\gldim \L_0=\infty$.

New results in this paper include: a proof that that ungraded and graded extension algebras of $T$ are isomorphic even when $\L$ is allowed to be non-noetherian (Corollary \ref{iso}), a much simplified account of $T$-Koszul modules (Proposition \ref{fincog} and Theorem \ref{duamod}), a proof that under certain finiteness conditions $T$-Koszul dual algebras have equivalent bounded categories (Theorem \ref{derequiv}), and a new description of so-called $\Delta$-filtered modules over certain quasi-hereditary Koszul algebras (Corollary \ref{gqh}).

We present an application to the representation theory of semi-simple Lie algebras that might be of particular interest. There is a $T$-Koszul duality acting behind the scenes in the paper \cite{Stan}, and it can be given the following surprisingly simple formulation.

\begin{cor2}[to Theorem \ref{hoved}]
Let $A_\lambda$ be a basic algebra corresponding to a block $\mathcal O_\lambda$ of category $\mathcal O$ for a semi-simple Lie algebra.  Let $\Delta$ denote the direct sum of all Verma modules in $\mathcal O_\lambda$. Let $E_\lambda=[\bigoplus_{i \geq 0} \Ext_{A_\lambda}^i(\Delta,\Delta)]^{\op}$. Suppose $A_\lambda$ is multiplicity free. Then $A_\lambda \cong [\bigoplus_{i \geq 0} \Ext_{E_\lambda}^i(D \Delta,D \Delta)]^{\op}$ as ungraded algebras.
Furthermore, if $A_\lambda$ and $E_\lambda$ are given the $\Ext$-grading, then there is an equivalence of triangulated categories $\mathcal D^b (\gr A_\lambda) \rightarrow \mathcal D^b (\gr E_\lambda)$.
\end{cor2}

In section 1 we recall some basic notions and facts about graded algebras and modules. In section 2 we discuss tilting equivalence and Koszul duality, highlighting the similarities between the two theories. In section 3 we look at graded self-orthogonal modules and the functors they define between derived categories. In section 4 we discuss the new definition of $T$-Koszul algebras and prove all the Koszul duality theorems. In section 5 we give an application to certain quasi-hereditary Koszul algebras, in particular multiplicity free blocks of category $\mathcal O$.

\section{Preliminaries on graded algebras and modules}

We recall some basic notions and facts regarding graded algebras and modules that we will use throughout the paper.

\subsection{Graded duality}
Let $k$ be a field and let $\L=\bigoplus_{i\geq 0} \L_i$ be a graded $k$-algebra. Throughout the paper we assume $\L$ is \emph{locally finite dimensional}, that is to say, we have $\dim_k \L_i < \infty$ for all $i \geq 0$. It is important to note that for most of the paper we do not assume $\L_0$ is semi-simple or commutative. To simplify statements of results, we will however assume throughout that $\L_0$ is augmented over $k \times \cdots \times k=k^{\times r}$ for some $r>0$, in particular we assume that $\L_0$ is a basic $k$-algebra.

We denote by $\Gr \L$ the category of graded $\L$-modules $M=\bigoplus_{i \in
\Z} M_i$ with degree $0$ morphisms. We denote by $\lfd \L$ the full
subcategory of locally finite dimensional modules, that is, graded $\L$-modules with $\dim_k M_i < \infty$ for all $i \in \Z$. Important full subcategories of $\lfd \L$ are the category of finitely generated graded $\L$-modules $\gr \L$ and the category of finitely cogenerated graded $\L$-modules $\cgr \L$.

Forgetting the grading of $\L$, we denote by $\Mod \L$ the category of (ungraded) $\L$-modules. If $A$ is a finite dimensional $k$-algebra, then we denote by $\mod A$ the category of finitely generated (ungraded) $A$-modules.

If $V=\bigoplus_{i \in \Z} V_i$ is a graded $k$-vector space, then its \emph{graded dual} vector space $DV$ is given by $(DV)_i=\Hom_k(V_{-i},k)$. If $M=\bigoplus_{i \in \Z} M_i$ is a graded $\L$-module, then $DM=\bigoplus_{i \in \Z} \Hom_k(M_{-i},k) $ can be given a graded $\L^{\op}$-module structure by $\lambda_j f_i(m)=f_i(\lambda_j m)$ for $\lambda_j \in \L_j$, $f_i \in \Hom_k(M_{-i},k)$ and $m \in M_{-i-j}$. If $M$ is locally finite dimensional, then $DDM \simeq M$.

The duality $D$ can also in a straightforward way be defined for morphisms of graded modules, and we get a duality functor $D \colon \lfd \L \rightarrow \lfd \L^{\op}$. This functor restricts to dualities $D \colon \gr \L \rightarrow \cgr \L^{\op}$ and $D \colon \cgr \L \rightarrow \gr \L^{\op}$. Restricting further to finite dimensional modules concentrated in degree zero, we get a duality $D \colon \mod \L_0 \rightarrow \mod \L^{\op}_0$. If $\L=\L_0$, then this is the usual duality $D \colon \mod \L \rightarrow \mod \L^{\op}$ for finite dimensional algebras.

Given a graded $\L$-module $M$, the \emph{$j$th graded shift} of M, denoted $M \gsh j$, is the module with graded parts $(M \gsh j)_i = M_{i-j}$ and module structure inherited from $M$. There is a functorial isomorphism $D(M \gsh j) \simeq DM \gsh {-j}$.

\subsection{Graded extensions}

The category $\Gr \L$ is abelian, and one can define the usual homological functors. The following well-known result can for instance be found in \cite[Corollaries 2.4.4 and 2.4.7]{Grad}.

\begin{lem}\label{ugrad}
Let $\L=\bigoplus_{i \geq 0} \L_i$ be a graded algebra, and let $M$ and
$N$ be graded $\L$-modules.
\begin{itemize}
\item[(a)]If $M$ is finitely generated, then $$\Hom _{\L}(M,N) \simeq
\bigoplus_{j \in \Z} \Hom _{\Gr \L}(M,N \gsh j)$$
\item[(b)]If $M$ is finitely generated and has a
projective resolution such that all syzygies are finitely generated, then $$\Ext^i_{\L}(M,N) \simeq \bigoplus_{j \in
\Z} \Ext^i_{\Gr \L}(M,N \gsh j)$$ for all $i \geq 0$.
\end{itemize}
\end{lem}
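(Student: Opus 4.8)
The plan is to prove (a) directly, by decomposing an ungraded homomorphism into its homogeneous components, and then to deduce (b) from (a) by applying it termwise to a suitable projective resolution.

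For (a), I would work with the natural map
\[
\Phi \colon \bigoplus_{j \in \Z} \Hom_{\Gr \L}(M, N\gsh{j}) \longrightarrow \Hom_\L(M,N)
\]
sending a family $(g_j)_j$ to the ungraded homomorphism $\sum_j g_j$, where each $g_j \in \Hom_{\Gr \L}(M, N\gsh{j})$ is regarded, after forgetting the grading, as a map $M \to N$ carrying $M_i$ into $(N\gsh{j})_i = N_{i-j}$, that is, as a homogeneous map of degree $-j$. Injectivity of $\Phi$ is immediate, since distinct summands land in distinct degrees of $N$, so $\sum_j g_j = 0$ forces every $g_j = 0$. For surjectivity, given $f \in \Hom_\L(M,N)$ I would define its degree $d$ component $f^{(d)}$ by $f^{(d)}(m) = f(m)_{i+d}$ for $m \in M_i$, where $f(m)_{i+d}$ is the degree $i+d$ part of $f(m) \in N$; a short computation using that $\L$ is graded shows each $f^{(d)}$ is again $\L$-linear, hence defines an element of $\Hom_{\Gr \L}(M, N\gsh{-d})$. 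The hypothesis that $M$ is finitely generated enters precisely here: choosing finitely many homogeneous generators $m_1, \dots, m_n$, each $f(m_s)$ has finite support in $\bigoplus_i N_i$, so only finitely many $f^{(d)}$ are nonzero, $f = \sum_d f^{(d)}$ is a finite sum, and $f$ lies in the image of $\Phi$.

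For (b), I would use the hypothesis to choose a projective resolution $P_\bullet \to M$ in $\Gr \L$ in which every $P_i$ is a finitely generated graded projective module, which is possible exactly because $M$ and all its syzygies are finitely generated. Forgetting the grading, each $P_i$ is a direct summand of a finite sum of copies of $\L$ and so is projective in $\Mod \L$, while exactness is unaffected; thus $P_\bullet \to M$ is also a projective resolution in $\Mod \L$ and computes $\Ext^i_\L(M,N)$. Applying part (a) to each finitely generated term $P_i$ gives isomorphisms $\Hom_\L(P_i, N) \simeq \bigoplus_j \Hom_{\Gr \L}(P_i, N\gsh{j})$ which, being natural in the first variable, are compatible with the differentials induced by $P_{i+1} \to P_i$. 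This yields an isomorphism of cochain complexes $\Hom_\L(P_\bullet, N) \simeq \bigoplus_j \Hom_{\Gr \L}(P_\bullet, N\gsh{j})$, and taking cohomology, using that cohomology of complexes of $k$-vector spaces commutes with direct sums, produces the stated isomorphism $\Ext^i_\L(M,N) \simeq \bigoplus_j \Ext^i_{\Gr \L}(M, N\gsh{j})$.

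The routine parts are the $\L$-linearity of each $f^{(d)}$ and the naturality of $\Phi$. The step requiring the most care is the reduction in (b): one must verify that the chosen graded resolution remains projective and exact after forgetting the grading, and that the termwise isomorphisms from (a) genuinely assemble into an isomorphism of complexes before passing to cohomology. The finiteness hypotheses are exactly what make the argument run, with finite generation of $M$ driving (a) and finite generation of the syzygies allowing (a) to be invoked at every term of the resolution in (b).
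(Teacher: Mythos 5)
Your argument is correct. The paper does not prove this lemma at all; it simply cites \cite[Corollaries 2.4.4 and 2.4.7]{Grad}, and what you have written out is precisely the standard argument found there: decomposing an ungraded homomorphism from a finitely generated graded module into finitely many homogeneous components for (a), and then applying this termwise to a graded projective resolution with finitely generated terms (which remains a projective resolution after forgetting the grading) for (b). Both the $\L$-linearity of the components $f^{(d)}$ and the compatibility of the termwise isomorphisms with the differentials check out, so there is nothing to correct.
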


With some finiteness conditions, the duality $D$ and graded extensions combine well. A reference for the following proposition is \cite[Proposition 1.3]{Ext}.

\begin{prop}
Let $\L=\bigoplus_{i \geq 0} \L_i$ be a graded algebra, let $M$ and
$N$ be graded locally finite dimensional $\L$-modules, and let $D$ denote the
above duality. If $M$ is bounded below or $N$ is bounded above, then
$$\Ext^i_{\Gr \L}(M,N) \simeq \Ext^i_{\Gr \L^{\op}}(DN,DM)$$
for all $i \geq 0$.
\end{prop}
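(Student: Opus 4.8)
The plan is to reduce the statement to the degree-zero case $\Ext^0 = \Hom$, which the duality handles directly, and then propagate it through a projective resolution. For any two modules $A,B \in \lfd \L$ the duality $D \colon \lfd \L \to \lfd \L^{\op}$, being a contravariant equivalence, supplies a natural isomorphism $\Hom_{\Gr \L}(A,B) \simeq \Hom_{\Gr \L^{\op}}(DB,DA)$; in particular the case $i=0$ holds with no boundedness hypothesis. The idea for $i>0$ is to choose a projective resolution $P_\bullet \to M$ in $\Gr \L$, apply $D$ to obtain an injective coresolution of $DM$ in $\Gr \L^{\op}$, and then compare the two complexes computing the respective Ext groups termwise via the $\Hom$ isomorphism above.

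First I would arrange that the resolution stays inside $\lfd \L$, which is exactly where the boundedness hypothesis enters. Assume $M$ is bounded below (the case where $N$ is bounded above follows by applying the result over $\L^{\op}$ to the modules $DN$ and $DM$ and using $DD \simeq \id$, since $(\L^{\op})^{\op}=\L$). Since $\L$ is non-negatively graded and locally finite dimensional, the graded projective cover of a bounded-below module in $\lfd \L$ is again bounded below and locally finite dimensional, and its syzygy, being a submodule of the cover, inherits these properties; iterating produces a projective resolution $P_\bullet \to M$ with every $P_i \in \lfd \L$ and bounded below. Boundedness below is essential here: for a module supported in arbitrarily negative degrees the free cover need not be locally finite dimensional, so $D$ could not be applied. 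Because $D$ is an exact duality on $\lfd \L$ and $D(M\gsh j)\simeq DM\gsh{-j}$, the complex $DP_\bullet$ is a coresolution $0 \to DM \to DP_0 \to DP_1 \to \cdots$ of $DM$ in $\lfd \L^{\op}$.

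Now $\Ext^i_{\Gr \L}(M,N) = H^i\bigl(\Hom_{\Gr \L}(P_\bullet,N)\bigr)$, and applying the natural $\Hom$ isomorphism with $A=P_i$, $B=N$ gives an isomorphism of complexes $\Hom_{\Gr \L}(P_\bullet,N) \simeq \Hom_{\Gr \L^{\op}}(DN,DP_\bullet)$, whence $\Ext^i_{\Gr \L}(M,N) \simeq H^i\bigl(\Hom_{\Gr \L^{\op}}(DN,DP_\bullet)\bigr)$. To identify the right-hand side with $\Ext^i_{\Gr \L^{\op}}(DN,DM)$ it suffices to know that $DP_\bullet$ is a genuine injective coresolution in $\Gr \L^{\op}$, that is, that each $DP_i$ is injective in the full category $\Gr \L^{\op}$. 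This is the main obstacle: exactness of $\Hom_{\Gr \L}(P_i,-)$ together with the duality only yields that $DP_i$ is injective relative to $\lfd \L^{\op}$, whereas computing $\Ext_{\Gr \L^{\op}}$ requires injectivity against all short exact sequences of graded modules. I would settle this by the standard fact that, for a locally finite graded algebra, the graded dual of a graded free module is a graded injective cogenerator: writing $P_i$ as a sum of shifts of $\L$ and using $D(\bigoplus_\alpha \L\gsh{d_\alpha}) \simeq \prod_\alpha D\L\gsh{-d_\alpha}$, the module $DP_i$ is a product of copies of the injective module $D\L$ and hence injective in $\Gr \L^{\op}$. With this in hand the coresolution $DP_\bullet$ computes $\Ext_{\Gr \L^{\op}}(DN,DM)$, completing the proof.
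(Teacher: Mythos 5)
Your proof is correct: the key points --- that a bounded below locally finite dimensional module admits a projective resolution with locally finite dimensional, bounded below terms; that $D$ carries such a resolution to an injective coresolution of $DM$ because the dual of a direct sum of shifts of $\L$ is a product of shifts of the graded injective $D\L$; and that the two Hom complexes are isomorphic termwise by naturality --- are all sound, and the symmetric case follows as you say by applying the result over $\L^{\op}$. The paper itself gives no proof, only a citation to \cite[Proposition 1.3]{Ext}, and your argument is the standard one carried out there, so there is nothing to flag.
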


\section{Tilting equivalence and Koszul duality}
In this section we present the basic facts about tilting equivalence and Koszul duality in such a way that the similarities between the two theories are made apparent.

Let $T$ be an object in an abelian category $\mathcal A$. Then $\add T$ denotes the full subcategory of $\mathcal A$ consisting of all finite direct sums of direct summands of $T$. The derived category of $\mathcal A$ is denoted by $\mathcal D (\mathcal A)$. The \emph{bounded derived category} of $\mathcal A$, that is, the full subcategory of $\mathcal D (\mathcal A)$ consisting of objects with bounded homology, is denoted by $\mathcal D^b(\mathcal A)$.

\subsection{Tilting and cotilting modules}

\begin{defin}
Let $A$ be a basic finite dimensional $k$-algebra and let $T$ be a finitely generated $A$-module. The module $T$ is called a \emph{tilting module} if all of the following conditions hold.
\begin{itemize}
\item[(i)] $\pd T < \infty$.
\item[(ii)] $\Ext_A^i(T,T)=0$ for all $i>0$.
\item[(iii)] There exists a coresolution $0 \to A \to T_0 \to T_1 \to \ldots \to T_n \to 0$ with $T_i$ in $\add T$ for all $0 \leq i \leq n$.
\end{itemize}
\end{defin}

There is a dual notion of cotilting module. A finitely generated $A$-module $U$ is called a \emph{cotilting module} if (i) $\id U < \infty$, (ii) $\Ext_A^i(U,U)=0$ for all $i>0$ and (iii) there exists a resolution $0 \to U_m \to \ldots \to U_1 \to U_0 \to DA \to 0$ with $U_i$ in $\add U$ for all $0 \leq i \leq m$.

There is a more general concept due to Wakamatsu \cite{Waka}. A finitely generated $A$-module $T$ is called a \emph{Wakamatsu tilting module} if (i) $\Ext_A^i(T,T)=0$ for all $i>0$ and (ii) there exists a coresolution $$0 \rightarrow A \rightarrow T_0 \xrightarrow {f_0} T_1 \xrightarrow {f_1} T_2 \rightarrow \ldots \rightarrow T_n \rightarrow \ldots$$ with $T_i$ in $\add T$ for all $i \geq 0$ and $\Ext^1_A(\ker f_i,T)=0$ for $i>0$. Wakamatsu cotilting modules are defined dually. It is known that a module is a Wakamatsu tilting module if and only if it is a Wakamatsu cotilting module \cite{Cot}.
Tilting modules and cotilting modules are Wakamatsu tilting modules, but in general a Wakamatsu tilting module can have both infinite projective dimension and infinite injective dimension. There is the following important conjecture about Wakamatsu tilting modules with finite projective dimension.

\begin{conj}[Wakamatsu Tilting Conjecture \cite{Apo}]
Let $T$ be a Wakamatsu tilting module with finite projective dimension. Then $T$ is a tilting module.
\end{conj}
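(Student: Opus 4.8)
The only thing separating a Wakamatsu tilting module $T$ from a genuine tilting module is the \emph{finiteness} of its $\add T$-coresolution of $A$. Indeed, self-orthogonality $\Ext^i_A(T,T)=0$ for $i>0$ is common to both definitions, and finite projective dimension is precisely the standing hypothesis; what the tilting definition demands in addition is a \emph{bounded} coresolution $0 \to A \to T_0 \to \cdots \to T_n \to 0$ with terms in $\add T$, whereas the Wakamatsu condition supplies only the possibly infinite coresolution $0 \to A \to T_0 \xrightarrow{f_0} T_1 \xrightarrow{f_1} \cdots$ with $\Ext^1_A(\ker f_i,T)=0$ for $i>0$. The plan is therefore to prove that, under $\pd T = d < \infty$, this coresolution terminates, equivalently that some cosyzygy $\ker f_i$ already lies in $\add T$.

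First I would try to convert finite projective dimension into finite injective dimension. Since $T$ is Wakamatsu tilting it is also Wakamatsu cotilting \cite{Cot}, so it carries both an $\add T$-coresolution of $A$ and an $\add T$-resolution of $DA$. It is known that a Wakamatsu tilting module having simultaneously finite projective and finite injective dimension is a genuine tilting module; granting this, the real target becomes the implication $\pd T < \infty \Rightarrow \id T < \infty$. A complementary approach is an inductive argument along the coresolution: strip off the term $T_0$, observe that the first cosyzygy $C_1 = \operatorname{coker}(A \to T_0) = \ker f_1$ is again self-orthogonal with respect to $T$, and attempt to show that after finitely many steps some cosyzygy falls into $\add T$.

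The main obstacle is exactly the implication $\pd T < \infty \Rightarrow \id T < \infty$, which is why the statement is recorded as a conjecture rather than a theorem. Passing from finite projective to finite injective dimension for a self-orthogonal module is not known in general and is closely tied to the Gorenstein symmetry and finitistic dimension conjectures; a proof in full generality would settle several of the standard homological conjectures at once. Self-orthogonality yields the vanishing $\Ext^i_A(T,T)=0$ for $i>0$ but not the eventual \emph{splitting} of cosyzygies into $\add T$ that a finite coresolution requires, and the induction stalls for the same reason: nothing forces the cosyzygies to stabilize. A realistic unconditional proof is thus available only under extra hypotheses---most cleanly when $A$ is Gorenstein, where finite projective and finite injective dimension coincide for every module and the coresolution is immediately seen to terminate---while the general case remains open.
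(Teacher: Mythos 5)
The statement you were asked to prove is recorded in the paper as an open \emph{conjecture}: the paper gives no proof of it, and none is known. Your proposal correctly recognizes this, and what you have written is an accurate diagnosis of the obstruction rather than a proof. Your identification of the precise gap is right: self-orthogonality plus $\pd T<\infty$ gives everything in the definition of a tilting module except the termination of the $\add T$-coresolution of $A$, and nothing currently known forces a cosyzygy $\ker f_i$ to land in $\add T$ after finitely many steps. Your reduction to the implication $\pd T<\infty \Rightarrow \id T<\infty$ is also sound: it is a standard fact that a Wakamatsu tilting module of finite projective and finite injective dimension is tilting, so the conjecture is equivalent to this dimension implication, which in turn sits inside the web of homological conjectures (Gorenstein symmetry, finitistic dimension) and is why the statement remains open. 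The Gorenstein special case you mention does go through for exactly the reason you give.

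The only thing to flag is a matter of framing rather than mathematics: your text is an explanation of why no proof exists, not a proof, so it should not be presented as a proof attempt. Within the paper the conjecture is used only as motivation (via Proposition \ref{fin}, which handles the case $\gldim A<\infty$, where infinite coresolutions eventually split and the issue disappears), so nothing downstream depends on resolving it.
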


If $A$ has finite global dimension, then infinite exact sequences will eventually split, and we have the following consequence.

\begin{prop}\label{fin}
Let $A$ be a finite dimensional algebra with $\gldim A < \infty$. Let $T$ be a finitely generated $A$-module. The following are equivalent.
\begin{itemize}
\item[(a)] $T$ is a tilting module.
\item[(b)] $T$ is a cotilting module.
\item[(c)] $T$ is a Wakamatsu tilting module.
\end{itemize}
\end{prop}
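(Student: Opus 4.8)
The implications (a)$\Rightarrow$(c) and (b)$\Rightarrow$(c) have already been recorded in the discussion preceding the proposition, so the plan is to establish the two converse implications (c)$\Rightarrow$(a) and (c)$\Rightarrow$(b). These are dual to one another: if $T$ is a Wakamatsu tilting $A$-module, then $DT$ is a Wakamatsu cotilting $A^{\op}$-module, hence a Wakamatsu tilting $A^{\op}$-module by the equivalence cited from \cite{Cot}, and $\gldim A^{\op}=\gldim A<\infty$. Applying $D$ to a finite $\add DT$-coresolution of $A^{\op}$ that witnesses $DT$ as a tilting $A^{\op}$-module produces a finite $\add T$-resolution of $DA$ witnessing $T$ as a cotilting $A$-module. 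Thus it suffices to prove (c)$\Rightarrow$(a) uniformly for all algebras of finite global dimension, and then read off (c)$\Rightarrow$(b) by applying it to $A^{\op}$ and $DT$.

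So assume $T$ is Wakamatsu tilting and set $d=\gldim A$. Condition (ii) in the definition of a tilting module is built into the definition of a Wakamatsu tilting module, and condition (i) is automatic since $\pd T\le d<\infty$. It remains to produce a \emph{finite} $\add T$-coresolution of $A$ as required by (iii). Starting from the infinite coresolution $0\to A\to T_0\xrightarrow{f_0}T_1\xrightarrow{f_1}\cdots$ supplied by the Wakamatsu condition, write $C_j=\operatorname{coker}f_{j-1}$ for the cosyzygies, with $C_{-1}=A$, so that there are short exact sequences $0\to C_{j-1}\to T_j\to C_j\to 0$. The goal is to show that some $C_n$ lies in $\add T$; the coresolution then truncates to $0\to A\to T_0\to\cdots\to T_{n-1}\to C_n\to 0$.

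The first step is a dimension shift. Since each $T_j\in\add T$ and $\Ext^i_A(T,T)=0$ for $i>0$, applying $\Hom_A(-,T)$ to these short exact sequences yields $\Ext^i_A(C_{j-1},T)\cong\Ext^{i+1}_A(C_j,T)$ for all $i\ge 1$. Iterating gives $\Ext^i_A(C_j,T)\cong\Ext^{i+m}_A(C_{j+m},T)$, and choosing $m$ with $i+m>d$ forces the right-hand side to vanish because $\gldim A=d$. Hence $\Ext^{>0}_A(C_j,T)=0$ for every cosyzygy; in particular the extra Wakamatsu requirement $\Ext^1_A(\ker f_j,T)=0$ is automatic once the global dimension is finite, so over such algebras a Wakamatsu tilting module is simply a self-orthogonal module admitting an $\add T$-coresolution of $A$.

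The decisive and, I expect, hardest step is to prove that this coresolution terminates. The plan is to transport it through $\Hom_A(-,T)$: because all the relevant $\Ext^1$-groups now vanish, $\Hom_A(-,T)$ carries the coresolution to an exact sequence $\cdots\to\Hom_A(T_1,T)\to\Hom_A(T_0,T)\to\Hom_A(A,T)\to 0$, which, as each $T_j\in\add T$, is a projective resolution of the $\Gamma$-module $\Hom_A(A,T)\cong T$ over $\Gamma=\End_A(T)$; if the coresolution is chosen minimal this resolution is minimal, so its length equals the $\add T$-coresolution length of $A$. Termination is therefore equivalent to the finiteness of $\pd_\Gamma T$, and it is exactly here that the finite global dimension of $A$ must be leveraged to make the infinite sequence split off after finitely many steps. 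I regard this as the main obstacle, since, unlike the $\Ext$-vanishing above, it is not a formal consequence of dimension-shifting and genuinely requires that both $\pd T$ and $\id T$ be finite; an alternative route to the same end is the counting argument that a self-orthogonal module of finite projective dimension possessing the maximal number of pairwise non-isomorphic indecomposable summands—as a Wakamatsu tilting module does—is automatically a tilting module. With (c)$\Rightarrow$(a) in hand, the implication (c)$\Rightarrow$(b) follows from the duality reduction of the first paragraph.
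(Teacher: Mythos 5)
There is a genuine gap, and it sits exactly at the step you yourself flag as ``the main obstacle'': you never actually prove that the $\add T$-coresolution of $A$ terminates. The paper offers only the one-line hint that over an algebra of finite global dimension ``infinite exact sequences will eventually split,'' and that splitting is precisely what your argument is missing. The route you propose (termination is equivalent to $\pd_\Gamma T<\infty$ over $\Gamma=\End_A(T)$) just trades one unproved finiteness for another, and the ``counting argument'' you mention as an alternative --- that a self-orthogonal module of finite projective dimension with the maximal number of indecomposable summands is tilting --- is not available here: for projective dimension $>1$ that statement is essentially the content of the Wakamatsu Tilting Conjecture itself and cannot be cited as known.

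The missing step is, however, a short dimension shift in the \emph{second} variable rather than the first. You already showed $\Ext^{>0}_A(C_j,T)=0$ for every cosyzygy $C_j$, hence $\Ext^{>0}_A(C_n,T_j)=0$ for all $j$. Applying $\Hom_A(C_n,-)$ to the short exact sequences $0\to C_{j-1}\to T_j\to C_j\to 0$ therefore gives $\Ext^i_A(C_n,C_j)\cong\Ext^{i+1}_A(C_n,C_{j-1})$ for all $i\ge 1$, and iterating down to $C_{-1}=A$ yields
$$\Ext^1_A(C_n,C_{n-1})\cong\Ext^{n+1}_A(C_n,A),$$
which vanishes as soon as $n\ge\gldim A$. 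Thus $0\to C_{n-1}\to T_n\to C_n\to 0$ splits, $C_{n-1}\in\add T$, and the coresolution truncates to a finite $\add T$-coresolution $0\to A\to T_0\to\cdots\to T_{n-1}\to C_{n-1}\to 0$, completing (c)$\Rightarrow$(a). Everything else in your write-up --- the reduction of (c)$\Rightarrow$(b) to (c)$\Rightarrow$(a) via $D$ and $A^{\op}$, the observation that $\pd T<\infty$ is automatic, and the first dimension shift showing the extra Wakamatsu condition is redundant --- is correct and in the spirit of the paper's (unwritten) argument.
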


For the rest of the paper we will assume that all tilting, cotilting and Wakamatsu tilting module we deal with are \emph{basic}, that is, the indecomposable direct summands of a module are non-isomorphic. If $A$ is augmented over $k^{\times r}$, then basic tilting or cotilting modules have $r$ indecomposable direct summands. It is not known whether the corresponding statement holds for Wakamatsu tilting modules.

\subsection{Tilting equivalence}

Let $A$ be a basic finite dimensional $k$-algebra and suppose $T$ is a tilting $A$-module. Let $B=[\End_A(T)]^{\op}$. The $B$-module $DT \simeq \Hom_A(T,DA)$ is a cotilting module, and there is a ring isomorphism $A \cong [\End_B(DT)]^{\op}$. According to the Morita theory for derived categories \cite{Rick} \cite[chapter 6]{Grp}, there exists a two-sided tilting complex $X$ and an
equivalence of triangulated categories $$\bar G_T=\mathbb R \Hom_A(X,-)
\colon \mathcal D^b(\mod A) \rightarrow \mathcal D^b(\mod B)$$ such that $H^i \bar G_T (M)=\Ext^i_A(T,M)$ for any $A$-module $M$ and $i \geq 0$. A quasi-inverse of $\bar G_T$ is given by $\bar F_T= X \otimes_{B}^{\mathbb L} - \colon \mathcal D^b(\mod B) \rightarrow \mathcal D^b(\mod A)$. The functor $\bar F_T$ has the property that $H^{-n}\bar F_T(N)=\Tor_n^B(T,N)$ for any $B$-module $N$ and $n \geq 0$.

The equivalence above restricts to an equivalence between certain
module categories. Define $$T^{\perp}=\{M \in \mod A \mid \Ext^i_A(T,M)=0, \; i>0 \}$$ and $${}^{\perp}DT=\{N \in \mod B \mid \Ext^i_B(N,DT)=0, \; i>0 \}.$$ We get an equivalence $\bar E_T=\Hom_A(T,-) \colon T^{\perp} \rightarrow {}^{\perp}DT$ with quasi-inverse $T \otimes_B -$. $$\xymatrix @M=1.5ex {\mathcal D^b(\mod A) \ar[rr]^{\mathbb R \Hom_A(X,-)}  && \mathcal D^b(\mod B)\\
T^{\perp} \ar[rr]^{\Hom_A(T,-)} \ar@{^{(}->}[u] && {}^{\perp}DT
\ar@{^{(}->}[u]}$$

Important objects in $T^{\perp}$ are $T$ itself and the injective cogenerator $DA$. Important objects in ${}^{\perp}DT$ are $DT$ and $B$. At the core of the equivalence $\bar E_T$ there is a correspondence between indecomposable summands of $T$ and indecomposable projective $B$-modules and another correspondence between indecomposable injective $A$-modules and indecomposable summands of $DT$.
\begin{eqnarray*}
&T \overset {\bar E_T} {\longmapsto}  B, \qquad &DA \overset {\bar E_T} {\longmapsto}  DT,\\&T \underset {\bar F_T} {\longmapsfrom} B, \qquad &DA \underset {\bar F_T} {\longmapsfrom} DT.
\end{eqnarray*}

If we wish to make the correspondence more symmetrical, we can compose $\bar E_T$ with the duality $D$ and get a duality $\bar E_T D=\Hom_{A^{\op}}(-,DT) \colon {}^{\perp}{}_{A^{\op}}DT \rightarrow {}^{\perp}{}_B DT$.
$$\xymatrix @M=1.5ex {& \mathcal D^b(\mod A) \ar[r]^{\bar G_T}  & \mathcal D^b(\mod B)\\ {}^{\perp}_{A^{\op}}DT \ar[r]^D &
{}_A T^{\perp} \ar[r]^{\bar E_T} \ar@{^{(}->}[u] & {}^{\perp}_B DT
\ar@{^{(}->}[u]}$$ In this way indecomposable direct summands of $DT$ on one side correspond to indecomposable projective modules on the other side in both directions.
\begin{eqnarray*}
&DT \stackrel {\bar E_T D} {\longmapsto} B, \qquad &A \overset {\bar E_T D} {\longmapsto}  DT,\\&DT \underset {D \bar F_T} {\longmapsfrom} B, \qquad &A \underset {D \bar F_T} {\longmapsfrom} DT.
\end{eqnarray*}

\subsection{Koszul duality}

Koszul algebras were first defined in \cite{Pri}. We follow here the treatment in \cite{Bei} in which algebras are supposed to be graded with semi-simple degree zero part.

\begin{defin}
Let $\L=\bigoplus_{i\geq 0} \L_i$ be a graded algebra where $\dim_k\L_i < \infty$ for all $i \geq 0$ and $\L_0 \cong k \times \cdots \times k=k^{\times r}$ as rings. Then $\L$ is called a \emph{Koszul algebra} if $\Ext^i_{\Gr \L}(\L_0,\L_0 \gsh j)=0$ whenever $i \neq j$.
\end{defin}

Suppose $\L$ is a Koszul algebra. By \cite[Proposition 2.1.3]{Bei}, this is equivalent to $\L_0$ having a linear projective $\L$-resolution. In particular $\L_0$ has a projective $\L$-resolution such that all syzygies are finitely generated. Let $$\G=[\bigoplus_{i\geq 0} \Ext_\L^i (\L_0,\L_0)]^{\op} \cong [\bigoplus_{i\geq 0} \Ext^i_{\Gr \L}(\L_0,\L_0 \gsh i)]^{\op}.$$ Then $\G$ is also a Koszul algebra \cite[Theorem 2.10.2]{Bei}, it is called the \emph{Koszul dual} of $\L$. There is an isomorphism of graded algebras $$\L \cong [\bigoplus_{i\geq 0} \Ext_\G^i (\G_0,\G_0)]^{\op} \cong [\bigoplus_{i\geq 0} \Ext^i_{\Gr \G}(\G_0,\G_0 \gsh i)]^{\op}.$$ According to the theory of lifts as laid out in \cite[7.3 and 10.2]{Dgc}, by lifting the set $\{\L_0 \gsh i [i] \}_{i \in \Z}$, we can construct a bigraded $\L$-$\G$-bimodule complex $X$ and a triangulated functor $$G=\mathbb R \Hom_{\Gr \L}(X,-) \colon \mathcal D(\Gr \L) \rightarrow \mathcal D(\Gr \G)$$ with the property that $(H^i G (M))_j=\Ext^{i+j}_{\Gr\L}(\L_0,M \gsh j)$ for any graded $\L$-module $M$ and $i,j \in \Z$. We recall the steps of this construction in section \ref{lift}. A left adjoint of $G$ is given by $F= X \otimes_{\Gr \G}^{\mathbb L} - \colon \mathcal D(\Gr \G) \rightarrow \mathcal D(\Gr \L)$.

The functor $G$ restricts to an equivalence between certain categories of graded modules. Define the category of \emph{coKoszul $\L$-modules} $$\cK(\L)=\{M \in \cgr \L \mid \Ext^i_{\Gr \L}(\L_0 \gsh{-j},M)=0 \text{ whenever } i \neq j \}$$ and the category of \emph{Koszul $\G$-modules} $$\K(\G)=\{N \in \gr \G \mid \Ext^i_{\Gr \G}(N,\G_0 \gsh j)=0 \text{ whenever } i \neq j \}.$$ We get an equivalence $E \colon \cK(\L) \rightarrow \K(\G)$, where $E=\bigoplus_{i\geq 0} \Ext^i_{\Gr \L}(\L_0 \gsh{-i},-) \simeq \bigoplus_{i\geq 0} \Ext^i_\L(\L_0,-)$.
$$\xymatrix @M=1.5ex {\mathcal D(\Gr \L) \ar[rr]^{\mathbb R \Hom_{\Gr \L}(X,-)}  && \mathcal D(\Gr \G)\\
\cK(\L) \ar[rr]^E \ar@{^{(}->}[u] && \K(\G)
\ar@{^{(}->}[u]}$$

Important objects in $\cK(\L)$ are $\L_0$ and the graded injective cogenerator $D\L$. Important objects in $\K(\G)$ are $\G_0$ and $\G$. At the core of the equivalence $E$ there is a correspondence between indecomposable graded simple $\L$-modules and indecomposable projective $\G$-modules and another correspondence between indecomposable graded injective $\L$-modules and graded simple $\G$-modules.
\begin{eqnarray*}
&\L_0 \overset {E} {\longmapsto}  \G, \qquad &D\L \overset {E} {\longmapsto} \G_0,\\&\L_0 \underset {F} {\longmapsfrom} \G, \qquad &D\L \underset {F} {\longmapsfrom} \G_0.
\end{eqnarray*}

If we wish to make the correspondence more symmetrical, we can compose $E$ with the duality $D$ and get a duality $ED \simeq \bigoplus_{i\geq 0} \Ext^i_{\L^{\op}}(-,\L_0) \colon \K(\L^{\op}) \rightarrow \K(\G)$.
$$\xymatrix @M=1.5ex {& \mathcal D(\Gr \L) \ar[r]^G  & \mathcal D(\Gr \G)\\ \K(\L^{\op}) \ar[r]^D &
\cK(\L) \ar[r]^E \ar@{^{(}->}[u] & \K(\G)
\ar@{^{(}->}[u]}$$ In this way graded simple modules on one side correspond to indecomposable projective modules on the other side in both directions.
\begin{eqnarray*}
&\L_0 \overset {ED} {\longmapsto}  \G, \qquad &\L \overset {ED} {\longmapsto} \G_0,\\&\L_0 \underset {DF} {\longmapsfrom} \G, \qquad &\L \underset {DF} {\longmapsfrom} \G_0.
\end{eqnarray*}

We often refer to the Koszul algebras of this section as \emph{classical} Koszul algebras.

\section{Graded self-orthogonal modules}

We now return to graded algebras $\L=\bigoplus_{i\geq 0} \L_i$, where $\L_0$ is a finite dimensional algebra augmented over $k^{\times r}$, but not necessarily semi-simple. Let $1=e_1 + \ldots + e_r$ be the decomposition of the identity in primitive orthogonal idempotents. Our aim is to describe a duality theory encompassing both tilting equivalence and Koszul duality. In this section we look at suitable functors for such a duality theory.

\subsection{Graded and ungraded extension algebras}

Motivated by the definition of classical Koszul algebras, we define the following homological property for graded modules.

\begin{defin}
Let $T$ be a finitely generated basic graded $\L$-module concentrated in degree $0$. We say that $T$ is a \emph{graded self-orthogonal module} if $\Ext^i_{\Gr \L}(T,T \gsh j)=0$ whenever $i \neq j$.
\end{defin}

Examples of graded self-orthogonal modules are the tilting modules in the $\L=\L_0$ case and the module $\L_0$ in the classical Koszul case.

In general such a module $T$ might have syzygies that are not finitely generated. Even though we are not in a situation where Lemma \ref{ugrad} (b) applies, we can establish the following isomorphisms.

\begin{prop}\label{ungr}
Let $T$ be a graded self-orthogonal module. Then $$\Ext^i _{\L}(T,T) \simeq \Ext^i _{\Gr \L}(T,T \gsh i)$$ for each $i \geq 0$.
\end{prop}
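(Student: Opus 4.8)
The plan is to compute $\Ext^i_\L(T,T)$ from a graded projective resolution of $T$ with the grading forgotten, and to exploit the hypothesis that $T$ is concentrated in degree $0$ to rewrite the result as a product over graded shifts which self-orthogonality then collapses to a single term. Since the syzygies of $T$ need not be finitely generated, Lemma \ref{ugrad}(b) does not apply, and the projective modules appearing will in general be infinitely generated; the entire point is to deal with this.

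The key elementary observation is that, because $T$ lives in degree $0$, for every graded $\L$-module $M$ there is a natural isomorphism
$$\Hom_\L(M,T) \simeq \prod_{j \in \Z} \Hom_{\Gr \L}(M, T \gsh j).$$
Indeed, a graded homomorphism $M \to T \gsh j$ is determined by its restriction to $M_j$, distinct values of $j$ involve distinct homogeneous components of $M$, and so an arbitrary family assembles, with no convergence condition, into a single ungraded map. I would stress the contrast with Lemma \ref{ugrad}(a): for finitely generated $M$ only finitely many components are nonzero and one gets a direct sum, but for an infinitely generated $M$ one genuinely gets the product.

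Next I would choose a graded projective resolution $P_\bullet \to T$ in $\Gr \L$ and forget the grading to regard it as an ungraded projective resolution, so that $\Ext^i_\L(T,T) = H^i \Hom_\L(P_\bullet, T)$. As the differentials of $P_\bullet$ are degree-preserving, the displayed isomorphism is natural in $M$ and yields an isomorphism of complexes $\Hom_\L(P_\bullet, T) \simeq \prod_{j \in \Z} \Hom_{\Gr \L}(P_\bullet, T \gsh j)$. Products of $k$-vector spaces are exact, so cohomology commutes with the product and
$$\Ext^i_\L(T,T) \simeq \prod_{j \in \Z} \Ext^i_{\Gr \L}(T, T \gsh j).$$
Graded self-orthogonality annihilates every factor with $j \neq i$, leaving the single factor $\Ext^i_{\Gr \L}(T, T \gsh i)$, which is the claim.

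The main obstacle is precisely the breakdown of the finiteness hypothesis in Lemma \ref{ugrad}(b): one must resist writing the ungraded Hom as a direct sum, correctly identify it as a product, and verify that cohomology commutes with that product. What rescues the argument is that the target $T$ is concentrated in a single degree, which both makes the product description hold with no summability restriction and, simultaneously, forces self-orthogonality to leave exactly one nonzero factor in each cohomological degree.
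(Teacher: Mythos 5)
Your proof is correct and follows essentially the same route as the paper: both forget the grading on a graded projective resolution, identify the ungraded Hom complex with a \emph{product} (not a direct sum) of graded Hom complexes into the shifts $T\gsh j$, and let graded self-orthogonality collapse the product to the single factor $j=i$. The only cosmetic difference is that the paper obtains the product decomposition by splitting each projective term as $\bigoplus_l P^{-i}\ggen l$ according to degree of generation, whereas you prove the natural isomorphism $\Hom_\L(M,T)\simeq\prod_{j}\Hom_{\Gr\L}(M,T\gsh j)$ directly for arbitrary graded $M$, which is a valid and slightly more streamlined way to reach the same intermediate formula.
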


\begin{proof} Suppose $T$ has a minimal graded projective resolution $$\mathcal P \colon \ldots \to P^{-i} \to \dots \to P^{-1} \to P^0$$ For each $i>0$ it might happen that $P^{-i}$ is not finitely generated, but, since $T$ is bounded below, we always have $P^{-i}=\bigoplus_{l \in \Z} P^{-i} \ggen l$, where $P^{-i} \ggen l$ is a graded projective $\L$-module finitely generated in degree $l$, for each $l \in \Z$. By forgetting the grading, we can regard $\mathcal P$ as an ungraded projective resolution of $T$. The complex $\Hom_\L(\mathcal P, T)$ computes the groups $\Ext^i _{\L}(T,T)$. The $i$th term of this complex is $\Hom_{\L}(\bigoplus_{l \in \Z} P^{-i} \ggen l,T)$. For each $i \geq 0$ we have \begin{eqnarray*}
\Hom_{\L}(\bigoplus_{l \in \Z} P^{-i} \ggen l,T) &\simeq& \prod_{l \in \Z} \Hom_{\L}(P^{-i} \ggen l,T)\\ &\simeq& \prod_{l \in \Z} \Hom_{\Gr \L}(P^{-i} \ggen l,T \gsh l),
\end{eqnarray*}
since $T$ is concentrated in degree $0$. Also, since $T$ is concentrated in a single degree, we have $$d(\Hom_{\Gr \L}(P^{-i} \ggen l,T \gsh l)) \subseteq \Hom_{\Gr \L}(P^{-(i-1)} \ggen l,T \gsh l)$$ for all $i>0$, $l \in \Z$, where $d$ is the differential in $\Hom_\L(\mathcal P, T)$. The complex $\Hom_\L(\mathcal P, T)$ is then an infinite product (indexed by $l$) of complexes $\Hom_\L(\mathcal P \ggen l, T \gsh l)$ of the form
\begin{multline*}
\ldots \rightarrow \Hom_{\Gr \L}(P^{-i} \ggen l,T \gsh l) \rightarrow \Hom_{\Gr \L}(P^{-(i-1)} \ggen l,T \gsh l) \rightarrow \ldots\\ \ldots \rightarrow \Hom_{\Gr \L}(P^{-1} \ggen l,T \gsh l) \rightarrow \Hom_{\Gr \L}(P^0 \ggen l,T \gsh l) \rightarrow 0.
\end{multline*}
For each $l \in \Z$, since $T \gsh l$ is concentrated in degree $l$, the complex $\Hom_\L(\mathcal P \ggen l, T \gsh l)$ computes the groups $\Ext^i _{\Gr \L}(T,T \gsh l)$. We conclude that $$\Ext^i_{\L}(T,T) \simeq \prod_{l \in \Z} \Ext^i_{\Gr \L}(T,T \gsh l)$$ for each $i \geq 0$. But $\Ext^i_{\Gr \L}(T,T \gsh l)=0$ whenever $i \neq l$, so $$\Ext^i_{\L}(T,T) \simeq \Ext^i_{\Gr \L}(T,T \gsh i)$$ for each $i \geq 0$.
\end{proof}

We conclude that the graded and ungraded extension algebras are isomorphic.

\begin{cor}\label{iso}
Let $T$ be a graded self-orthogonal module. There is an isomorphism of graded algebras $$[\bigoplus_{i\geq 0} \Ext_\L^i (T,T)]^{\op} \cong [\bigoplus_{i\geq 0} \Ext^i_{\Gr \L}(T,T \gsh i)]^{\op}.$$
\end{cor}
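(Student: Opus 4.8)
The isomorphism of graded vector spaces is already in hand: Proposition \ref{ungr} furnishes, for each $i\geq 0$, an isomorphism $\Ext^i_{\L}(T,T)\simeq\Ext^i_{\Gr\L}(T,T\gsh i)$, and assembling these over $i\geq 0$ identifies the underlying graded vector spaces of the two extension algebras, the grading on the $\Ext_\L$ side being the cohomological ($\Ext$) degree. What remains is to check that this identification is multiplicative for the Yoneda products; then the same underlying map is automatically an isomorphism of the opposite algebras as well. The plan is to exhibit the isomorphism of Proposition \ref{ungr} as the natural comparison induced by forgetting the grading, for which multiplicativity comes for free.

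More precisely, I would first record that the total bigraded space $\bigoplus_{i\geq 0}\bigoplus_{l\in\Z}\Ext^i_{\Gr\L}(T,T\gsh l)$ carries a Yoneda product, and that the forgetful functor $\Gr\L\to\Mod\L$, being exact and sending the minimal graded projective resolution $\mathcal P$ of $T$ (with the grading forgotten) to an ungraded projective resolution of $T$, induces a ring homomorphism $\phi$ from this bigraded algebra to $\bigoplus_{i\geq 0}\Ext^i_{\L}(T,T)$. Graded self-orthogonality of $T$ forces $\Ext^i_{\Gr\L}(T,T\gsh l)=0$ for $l\neq i$, so the source of $\phi$ collapses to the diagonal $\bigoplus_{i\geq 0}\Ext^i_{\Gr\L}(T,T\gsh i)$, which is exactly the graded extension algebra. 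Unwinding the proof of Proposition \ref{ungr}, the decomposition $\Hom_{\L}(P^{-i},T)\simeq\prod_{l\in\Z}\Hom_{\Gr\L}(P^{-i}\ggen l,T\gsh l)$ used there is precisely the componentwise description of $\phi$; since only the $l=i$ factor survives, the product agrees with the direct sum and $\phi$ is the isomorphism of Proposition \ref{ungr} in each cohomological degree. Hence $\phi$ is a bijective ring homomorphism, i.e.\ an isomorphism of graded algebras, and passing to opposite algebras gives the stated isomorphism.

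The main thing to verify carefully --- and the only real content beyond Proposition \ref{ungr} --- is the identification of the abstract isomorphism with the natural forgetful comparison, together with the bookkeeping of internal degrees showing that the Yoneda product of a class in $\Ext^i_{\Gr\L}(T,T\gsh i)$ with one in $\Ext^j_{\Gr\L}(T,T\gsh j)$ lands in $\Ext^{i+j}_{\Gr\L}(T,T\gsh{i+j})$, so that the shifts add correctly. Concretely, representing the classes by cocycles $P^{-i}\to T$ and $P^{-j}\to T$ and computing the Yoneda product by lifting one of them to a chain endomorphism of $\mathcal P$ and composing, one uses that $\mathcal P$ is a complex of graded projectives with homogeneous differential, so the lifts may be chosen homogeneous; since $T$ is concentrated in degree $0$, the resulting composite is again homogeneous and represents the graded product, with internal degree equal to the total cohomological degree. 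No new homological difficulties arise: the same resolution computes both products, and the homogeneity of the comparison maps is forced by $T$ being concentrated in a single degree.
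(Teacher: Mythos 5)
Your proposal is correct and follows essentially the same route as the paper: both arguments obtain multiplicativity for free from functoriality of the forgetful functor $\Gr\L\to\Mod\L$ and then invoke Proposition \ref{ungr} (noting that the product over $l$ collapses to the single diagonal term) for bijectivity. The only difference is presentational --- the paper packages the multiplicativity by codifying each graded algebra as a full subcategory ($\{T[i]\}$ of $\mathcal D(\Mod\L)$, respectively $\{T\gsh i[i]\}$ of $\mathcal D(\Gr\L)$) and observing that the forgetful functor restricts to an equivalence between them, whereas you verify the same compatibility directly at the level of Yoneda products on the graded projective resolution.
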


\begin{proof}
From Proposition \ref{ungr} we know there is a bijective linear map between the algebras. To prove that multiplication is preserved, we codify each algebra as a category and show that the resulting categories are equivalent. (See \cite[section 4]{Ext} for the details on how to codify a graded algebra as a category.) The graded algebra $[\bigoplus_{i\geq 0} \Ext_\L^i (T,T)]^{\op}$ can be codified as the full subcategory $\mathcal U=\{T [i] \mid i \in \Z\} \subseteq \mathcal D (\Mod \L)$.  The graded algebra $Ext^i_{\Gr \L}(T,T \gsh i)]^{\op}$ can be codified as the full subcategory $\mathcal V=\{T \gsh i [i] \mid i \in \Z\} \subseteq \mathcal D (\Gr \L)$. There is a forgetful functor $|-| \colon D (\Gr \L) \to D (\Mod \L)$ which by (the proof of) Proposition \ref{ungr} restrict to an equivalence $|-| \colon \mathcal V \to \mathcal U$. Therefore $[\bigoplus_{i\geq 0} \Ext_\L^i (T,T)]^{\op}$ and $[\bigoplus_{i\geq 0} \Ext^i_{\Gr \L}(T,T \gsh i)]^{\op}$ are isomorphic as graded algebras.
\end{proof}

\subsection{Derived functors}\label{lift}
Let $T=\bigoplus_{h=1}^r T_{(h)}$ be a graded self-orthogonal module, where each direct summand $T_{(h)}$ is indecomposable, and let $$\mathcal P \colon \ldots \to P^{-i} \to \ldots \to P^{-1} \to P^0$$ be a graded projective resolution of $T$. Consider the class of objects $$\mathcal U =\{T_{(h)} \gsh i [i] \mid 1 \leq h \leq r , i \in \Z \}$$ in $\mathcal D (\Gr \L)$. Constructing the \emph{standard lift} \cite[7.3]{Dgc} of $\mathcal U$, we get the graded DG algebra $\tilde \G$ with components $$\tilde \G^j_i=\bigoplus_{l \in \Z}\Hom_{\Gr \L}(P^l,P^{l+i+j}\gsh i)$$ and differential $$f \mapsto d \circ f - (-1)^n f \circ d$$ for $f \in \tilde \G^j$, and a bigraded DG $\L$-$\tilde \G$-bimodule $\tilde X$ with components $$\tilde X^l_{i,j}=P^{l+j}_{i-j}.$$ Let $\mathcal D (\tilde \G)$ denote the derived category of $\tilde \G$. There is a pair of adjoint triangulated functors $$\tilde G_T=\mathbb R \Hom_{\Gr \L}(\tilde X,-) \colon \mathcal D(\Gr \L) \rightarrow \mathcal D(\tilde \G)$$ and $$\tilde F_T= \tilde X \otimes^{\mathbb L}_{\tilde \G} - \colon \mathcal D(\tilde \G) \rightarrow \mathcal D(\Gr \L).$$ By the periodicity of $\tilde X$, we have $\tilde G_T(N \gsh i)= \tilde G_T (N) \gsh {-i} [-i]$ for any $N \in \mathcal D(\Gr \L)$. Since $T$ is graded self-orthogonal, the DG algebra $\tilde \G$ is formal and has cohomology algebra $$\G=[\bigoplus_{i\geq 0} \Ext^i_{\Gr \L}(T,T \gsh i)]^{\op} \cong [\bigoplus_{i\geq 0} \Ext_\L^i (T,T)]^{\op}.$$ By \cite[9.1]{Dgc}, we can find a bigraded DG $\tilde \G$-$\G$-bimodule $Y$ such that $$H_{\tilde \G}=\mathbb R \Hom_{\tilde \G}(Y,-) \colon \mathcal D (\tilde \G) \to \mathcal D(\Gr \G)$$ is an equivalence of triangulated categories. This equivalence commutes with the graded shift $\gsh 1$. Now let $G_T=H_{\tilde \G} \tilde G_T$. By \cite[6.3(c)]{Dgc}, there is a complex of bigraded $\L$-$\G$-modules $X$ such that $$G_T=\mathbb R \Hom_{\Gr \L}(X,-) \colon \mathcal D (\Gr \L) \to \mathcal D(\Gr \G).$$ This functor has a left adjoint $$F_T= X \otimes^{\mathbb L}_{\Gr \G} - \colon \mathcal D(\Gr \G) \rightarrow \mathcal D(\Gr \L).$$ We let $\phi \colon F_T G_T \rightarrow \id_{\mathcal D (\Gr \L)}$ denote the counit of the adjunction.

It is perhaps fortunate that we won't be needing the actual construction of the complex $X$ and the functor $G_T$. Besides being triangulated, the other important facts about $G_T$ can be summarized in the following proposition.

\begin{prop}\label{rhom}
Let $T$ be a graded self-orthogonal $\L$-module. Let $M$ be a finitely cogenerated $\L$-module, let $N$ be an object in $\mathcal D(\Gr \L)$, and let $i,j \in \Z$. Then
\begin{itemize}
\item[(a)] $G_T(T) \simeq \G$.
\item[(b)] The map $\phi_T \colon F_T G_T(T) \rightarrow T$ is an isomorphism.
\item[(c)] There is a functorial isomorphism $G_T(N \gsh j) \simeq G_T(N) \gsh {-j} [-j].$
\item[(d)] There is a functorial isomorphism $F_T G_T(N \gsh j) \simeq F_T G_T(N) \gsh j$.
\item[(e)] $(H^i G_T(M))_j \simeq \Ext^{i+j}_{\Gr \L}(T,M \gsh j).$
\item[(f)] $G_T(D\L) \simeq DT$, where $DT \simeq \Hom_{\Gr \L}(T,D\L)$ is considered as a graded $\G$-module concentrated in degree $0$.
\end{itemize}
\end{prop}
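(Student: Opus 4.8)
The plan is to take (c) and (e) as the two substantive statements and to deduce (a), (b), (d), (f) from them together with the formal properties of the functors recorded in \ref{lift}, without ever touching the explicit complex $X$. Part (c) is immediate: the periodicity $\tilde G_T(N\gsh j)\simeq\tilde G_T(N)\gsh{-j}[-j]$ of the standard lift, together with the fact that the formality equivalence $H_{\tilde\G}$ commutes with $\gsh 1$, compose to give $G_T(N\gsh j)=H_{\tilde\G}\tilde G_T(N\gsh j)\simeq G_T(N)\gsh{-j}[-j]$.

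The heart of the proposition is (e). Since $H_{\tilde\G}$ is a triangulated equivalence that intertwines the cohomology functors and the internal grading, it is enough to compute the bigraded cohomology of the DG $\tilde\G$-module $\tilde G_T(M)=\mathbb R\Hom_{\Gr\L}(\tilde X,M)$. Because $\tilde X$ is assembled from the graded projective resolution $\mathcal P$ of $T$ (its components are the reindexed $\tilde X^l_{i,j}=P^{l+j}_{i-j}$), this $\Hom$-complex already computes graded $\Ext$, and the task is to match the bidegree $(i,j)$ piece with $\Ext^{i+j}_{\Gr\L}(T,M\gsh j)$. To keep the bookkeeping honest I would first use (c) to reduce to the internal-degree-zero strand: from $G_T(M\gsh j)\simeq G_T(M)\gsh{-j}[-j]$ one reads off $(H^iG_T(M))_j\simeq(H^{i+j}G_T(M\gsh j))_0$, so it suffices to prove $(H^nG_T(M'))_0\simeq\Ext^n_{\Gr\L}(T,M')$ for all $n$ and all finitely cogenerated $M'$. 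The finiteness hypothesis enters exactly as in Proposition \ref{ungr}: the syzygies $P^{-i}=\bigoplus_l P^{-i}\ggen{l}$ need not be finitely generated, but since a finitely cogenerated module is bounded above, for each fixed internal degree only finitely many summands $P^{-i}\ggen{l}$ contribute, so the relevant $\Hom$-groups are honest finite sums and the cohomology is computed correctly.

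With (e) in hand the remaining parts fall out. For (a), graded self-orthogonality makes $\Ext^{i+j}_{\Gr\L}(T,T\gsh j)$ vanish unless $i=0$, while $(H^0G_T(T))_j\simeq\Ext^j_{\Gr\L}(T,T\gsh j)=\G_j$; the $\G$-module structure is the regular one coming from $\tilde G_T(T)\simeq\tilde\G$, so $G_T(T)\simeq\G$. For (f), $D\L$ is graded injective, so (e) forces $(H^iG_T(D\L))_j$ to vanish unless $i+j=0$, where it equals $\Hom_{\Gr\L}(T,D\L\gsh{-i})$; applying the duality $D$ and the isomorphism $D(\L\gsh i)\simeq D\L\gsh{-i}$ rewrites this as $\Hom_{\Gr\L^{\op}}(\L\gsh i,DT)$, a map out of a module generated in degree $i$ into the degree-zero module $DT$, which is zero unless $i=0$ and equals $DT$ when $i=0$. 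Hence $G_T(D\L)$ is concentrated in cohomological and internal degree zero and equals $\Hom_{\Gr\L}(T,D\L)$, with the $\G$-module structure the natural one by functoriality.

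For (d), rewriting (c) as $G_T(W\gsh{-j})\simeq G_T(W)\gsh{j}[j]$ and feeding it through the adjunction $F_T\dashv G_T$ yields $F_T(L\gsh{-j}[-j])\simeq F_T(L)\gsh{j}$ by Yoneda, whence $F_TG_T(N\gsh j)\simeq F_T(G_T(N)\gsh{-j}[-j])\simeq F_TG_T(N)\gsh{j}$. Finally for (b), part (a) gives $F_TG_T(T)\simeq F_T(\G)=X\otimes^{\mathbb L}_{\Gr\G}\G\simeq X$, and by construction the underlying $\L$-complex of $X$ is quasi-isomorphic to $T$ with the counit $\phi_T$ realizing this quasi-isomorphism; this is part of the defining property of the standard lift, so I would cite \cite[7.3]{Dgc}. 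The one genuinely laborious step is (e): everything else is either formal (c, d) or a direct reading of the cohomology formula (a, f) or a citation (b), whereas (e) requires correctly pinning down the bigraded cohomology of $\mathbb R\Hom_{\Gr\L}(\tilde X,M)$ while carrying two gradings through the formality equivalence and controlling the possibly infinitely generated syzygies, which is precisely where the finitely cogenerated hypothesis does its work.
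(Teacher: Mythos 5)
Your treatment of (c), (d), and (f) matches the paper's, and your overall conclusions are right, but you organize the logic differently at the crucial point, namely (e). The paper proves (a) and (b) \emph{first}, by citing the defining property of the standard lift (that $\tilde G_T$ restricts to an equivalence between $\mathcal U$ and the summands of the $\tilde\G\gsh i$), and then obtains (e) by a purely formal four-step chain: $(H^iG_T(M))_j\simeq\Hom_{\mathcal D\Gr\G}(\G\gsh j,G_T(M)[i])$ since $\G\gsh j$ is graded projective, then $\G\gsh j\simeq G_T(T\gsh{-j})[-j]$ by (a) and (c), then the adjunction $F_T\dashv G_T$ together with $\phi_{T\gsh{-j}}$ being an isomorphism (from (b) and (d)) moves everything back to $\mathcal D(\Gr\L)$, giving $\Hom_{\mathcal D\Gr\L}(T\gsh{-j}[-j],M[i])\simeq\Ext^{i+j}_{\Gr\L}(T,M\gsh j)$. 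This completely sidesteps what you correctly identify as the laborious step of your plan: computing the bigraded cohomology of $\mathbb R\Hom_{\Gr\L}(\tilde X,M)$ and carrying it through the formality equivalence. That step is where your write-up is thinnest --- the assertion that $H_{\tilde\G}$ ``intertwines the cohomology functors and the internal grading'' is exactly the nontrivial content you would have to establish, and your appeal to the finitely cogenerated hypothesis to tame the syzygies is not actually needed for (e) (the paper's argument uses no boundedness at all there; the hypothesis earns its keep only later, in Proposition \ref{fincog}). Note also that your derivation of (a) from (e) only identifies $G_T(T)$ with $\G$ as a bigraded vector space; to get the isomorphism of (complexes of) graded $\G$-modules you fall back on $\tilde G_T(T)\simeq\tilde\G$, which is the same citation of \cite[7.3]{Dgc} the paper makes --- so you do not actually escape proving (a) and (b) by the lift property, and once you have them the formal route to (e) is both shorter and safer than the direct computation.
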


\begin{proof}
(a) and (b) One of the defining properties of a lift is that $\tilde G_T$ restricts to an equivalence between $\mathcal U$ and the category consisting of the indecomposable direct summands of the DG modules $\tilde \G \gsh i$, see \cite[7.3]{Dgc}. Composing with the equivalence $H_{\tilde \G}$, we get assertions (a) and (b).

(c) By the definition of $G_T$ it follows that
\begin{eqnarray*}
G_T(N \gsh j) &=& H_{\tilde \G} \tilde G_T(N \gsh j)\\
&=& H_{\tilde \G} (\tilde G_T(N) \gsh {-j} [-j])\\
&\simeq& (H_{\tilde \G} \tilde G_T(N)) \gsh {-j} [-j]\\
&=& G_T(N) \gsh {-j} [-j].
\end{eqnarray*}

(d) For any object $L$ in $\mathcal D(\Gr \L)$, there are isomorphisms
\begin{eqnarray*}
\Hom_{\mathcal D \Gr \L}(F_T G_T (N \gsh j),L)&\simeq& \Hom_{\mathcal D \Gr \G}(G_T (N \gsh j),G_T(L))\\ &\simeq& \Hom_{\mathcal D \Gr \G}(G_T(N) \gsh {-j}[-j],G_T(L))\\ &\simeq& \Hom_{\mathcal D \Gr \G}(G_T(N),G_T(L) \gsh j [j])\\ &\simeq& \Hom_{\mathcal D \Gr \L}(F_T G_T (N),L \gsh {-j})\\ &\simeq& \Hom_{\mathcal D \Gr \L}(F_T G_T (N) \gsh j,L).
\end{eqnarray*}

(e) There are isomorphisms
\begin{eqnarray*}
(H^i G_T(M))_j &\simeq& \Hom_{\mathcal D \Gr \G}(\G \gsh j,G_T(M)[i])\\ &\simeq& \Hom_{\mathcal D \Gr \L}(T \gsh {-j}[-j],M[i])\\
&\simeq& \Hom_{\mathcal D \Gr \L}(T, M \gsh j [i+j])\\
&\simeq& \Ext^{i+j}_{\Gr \L}(T,M \gsh j).
\end{eqnarray*}

(f) Since $D\L$ is injective, the group $(H^i G_T(D\L))_j \simeq \Ext^{i+j}_{\Gr \L}(T,D\L \gsh j)$ is non-zero only when $i+j=0$. Since $T$ is concentrated in degree $0$, we have $\Hom_{\Gr \L}(T,D\L \gsh j) \neq 0$ only when $j=0$ and as a consequence $i=0$. Therefore, $G_T(D\L) \simeq (H^0 G_T(D\L))_0 \simeq \Hom_{\Gr \L}(T,D\L) \simeq DT.$
\end{proof}

The question of whether $\phi_{D\L} \colon F_T G_T(D\L) \rightarrow D\L$ is an isomorphism is a central one, and we will return to it in the next section.

\section{$T$-Koszul algebras}

$T$-Koszul algebras were first defined by Green, Reiten, and Solberg \cite{TKos} with the purpose of developing a unified approach to Koszul duality and tilting equivalence. The original definition is rather involved, and we will not repeat the details here. In \cite{Ext} we observed that the two central properties of $T$-Koszul algebras are (i) $T$ is graded self-orthogonal as a $\L$-module, and (ii) $\phi_{D\L} \colon F_T G_T(D\L) \rightarrow D\L$ is an isomorphism. These two conditions taken together are strong enough to ensure the existence of a good duality theory. This fact led us in \cite{Ext} to make a simplified definition and say that $\L$ is a  $T$-Koszul algebra if and only if (i) and (ii) hold.

The condition on $\phi_{D\L}$, while bringing much of the power, might not be so tractable, and we present here a new equivalent definition where that condition is replaced with a more elementary one. The only slightly unfortunate aspect of the new definition is that we have to assume $\gldim \L_0 < \infty$, but most of the examples we are interested in are still covered.

\subsection{The new definition}

\begin{defin}\label{nydef}
Let $\L=\bigoplus_{i\geq 0} \L_i$ be a graded algebra with $\gldim \L_0 < \infty$, and let $T$ be a graded $\L$-module concentrated in degree $0$.
We say that the algebra $\L$ is \emph{$T$-Koszul} (or, \emph{Koszul with respect to $T$}) if both of the following conditions hold.
\begin{itemize}
\item[(i)] $T$ is a tilting $\L_0$-module.
\item[(ii)] $T$ is graded self-orthogonal as a $\L$-module.
\end{itemize}
\end{defin}

The tilting and classical Koszul algebra cases that we want to generalize can be recovered by setting $\L=\L_0$ and $\L_0 \cong k^{\times r}$ respectively.

If $\L=\L_0$, then (i) implies (ii), so $\L$ is Koszul with respect to $T$ if and only if $T$ is a tilting module.

If $\L_0 \cong k^{\times r}$, then the only tilting $\L_0$-module is $\L_0$ itself. In this case $\L$ is Koszul with respect to $T=\L_0$ if and only $\L$ is a classical Koszul algebra.

A great benefit of the new definition of $T$-Koszul algebras is that it is easy to establish the following important theorem. It was also obtained in \cite{Mag} with the original definition.

\begin{thm}\label{opp}
Let $\L=\bigoplus_{i\geq 0} \L_i$ be a graded algebra with $\gldim \L_0 < \infty$. Suppose $\L$ is a Koszul algebra with respect to a module $T$. Then $\L^{\op}$ is a Koszul algebra with respect to $DT$.
\end{thm}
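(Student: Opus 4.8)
The plan is to verify directly that the pair $(\L^{\op}, DT)$ satisfies the two conditions of Definition \ref{nydef}. First I would dispose of the preliminary point that the definition even applies to $\L^{\op}$: since global dimension is left--right symmetric for finite dimensional algebras, $\gldim (\L^{\op})_0 = \gldim \L_0^{\op} = \gldim \L_0 < \infty$. I would also observe that $DT$ is again a finitely generated basic graded module concentrated in degree $0$, because $T$ is finite dimensional and the duality $D$ sends degree $0$ to degree $0$ and preserves indecomposability and the non-isomorphism of summands.

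For condition (i) I would argue that $DT$ is a tilting $\L_0^{\op}$-module. The duality $D \colon \mod \L_0 \to \mod \L_0^{\op}$ carries tilting modules to cotilting modules: dualizing a finite projective resolution of $T$ gives a finite injective resolution of $DT$, the self-orthogonality $\Ext^i_{\L_0}(T,T)=0$ dualizes to $\Ext^i_{\L_0^{\op}}(DT,DT)=0$, and dualizing the tilting coresolution of $\L_0$ produces the required $\add DT$-resolution of $D\L_0$. Hence $DT$ is a cotilting $\L_0^{\op}$-module. This is where finiteness of global dimension does the real work: by Proposition \ref{fin} applied to $\L_0^{\op}$, the classes of tilting, cotilting, and Wakamatsu tilting modules coincide, so $DT$ is in fact a tilting $\L_0^{\op}$-module.

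For condition (ii) I would show $DT$ is graded self-orthogonal over $\L^{\op}$, that is, $\Ext^i_{\Gr \L^{\op}}(DT, DT \gsh j) = 0$ for $i \neq j$. Starting from the hypothesis $\Ext^i_{\Gr \L}(T, T \gsh j) = 0$ for $i \neq j$, I would apply the graded $\Ext$ duality of Section 1 with $M = T$ and $N = T \gsh j$ (both locally finite dimensional, and $T$ concentrated in degree $0$, hence bounded below), obtaining $\Ext^i_{\Gr \L}(T, T \gsh j) \simeq \Ext^i_{\Gr \L^{\op}}(D(T \gsh j), DT)$. Using the functorial isomorphism $D(T \gsh j) \simeq DT \gsh{-j}$ and then shift-invariance of graded $\Ext$ (shifting both arguments by $j$), the right-hand side is isomorphic to $\Ext^i_{\Gr \L^{\op}}(DT, DT \gsh j)$. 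Chaining these gives $\Ext^i_{\Gr \L^{\op}}(DT, DT \gsh j) \simeq \Ext^i_{\Gr \L}(T, T \gsh j)$, which vanishes for $i \neq j$.

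The two conditions together say exactly that $\L^{\op}$ is $DT$-Koszul. I expect the only genuinely substantive point to be condition (i): self-orthogonality transports across $D$ essentially formally, but the tilting property does not transport on its own, since $D$ only yields a cotilting module. It is precisely the hypothesis $\gldim \L_0 < \infty$, via Proposition \ref{fin}, that allows one to upgrade cotilting to tilting. Without finite global dimension this symmetric statement would fail, which is consistent with the remark in the text that the finiteness assumption is exactly what the new definition relies on.
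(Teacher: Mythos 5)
Your proposal is correct and follows essentially the same route as the paper's own proof: dualize the tilting module to get a cotilting $\L_0^{\op}$-module and upgrade it to a tilting module via Proposition \ref{fin} using $\gldim \L_0^{\op} = \gldim \L_0 < \infty$, then transport graded self-orthogonality through the chain $\Ext^i_{\Gr \L}(T,T \gsh j) \simeq \Ext^i_{\Gr \L^{\op}}(DT \gsh {-j},DT) \simeq \Ext^i_{\Gr \L^{\op}}(DT,DT \gsh j)$. Your write-up is somewhat more explicit than the paper's about where Proposition \ref{fin} enters, but the substance is identical.
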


\begin{proof}
If $T$ is a tilting $\L_0$-module, then $DT$ is a cotilting $\L_0^{\op}$-module. Since $\gldim \L_0^{\op}=\gldim \L_0$ is finite, any cotilting $\L_0^{\op}$-module is also a tilting $\L_0^{\op}$-module.

Using the duality $D$ we get an isomorphism $$\Ext^i_{\Gr \L}(T,T \gsh j) \simeq \Ext^i_{\Gr \L^{\op}}(DT \gsh {-j},DT) \simeq \Ext^i_{\Gr \L^{\op}}(DT,DT \gsh j).$$ So $T$ is graded self-orthogonal as a $\L$-module if and only if $DT$ is graded self-orthogonal as a $\L^{\op}$-module.
\end{proof}

If $\L$ is a Koszul algebra with respect to $\L_0$, then $\L^{\op}$ is a Koszul algebra with respect to $D \L_0$. As the following example shows, it does not necessarily follow that $\L$ is a Koszul algebra with respect to $D \L_0$.

\begin{example}
Let $\L$ be the path algebra given by the quiver
$$\xymatrix{1 \ar[r]^\alpha & 2 \ar[r]^\beta & 3}$$
and relation $\beta \alpha=0$. Let $\deg \alpha =2$ and $\deg \beta=0$.

Then $\L$ is a Koszul algebra with respect to $\L_0=S_1 \oplus P_2 \oplus S_3$. On the other hand $\L$ is not a Koszul algebra with respect to $D\L_0=S_1 \oplus S_2 \oplus P_2$. Indeed, we have $$\Ext^1_{\Gr \L}(D\L_0,D\L_0 \gsh 2)=\Ext^1_{\Gr \L}(S_1,S_2 \gsh 2) \neq 0,$$ and therefore $D\L_0$ is not graded self-orthogonal.
\end{example}

The following technical proposition provides us with a class of modules $M$ for which $G_T(M)$ is also a module and the counit map $\phi_M$ is an isomorphism. The module $D\L$ turns out to be in this class (which we later call the class of \emph{$T$-coKoszul} modules, see Theorem \ref{duamod}).

\begin{prop}\label{fincog}
Let $\L=\bigoplus_{i\geq 0} \L_i$ be a graded algebra with $\gldim \L_0 < \infty$. Suppose $\L$ is a Koszul algebra with respect to a module $T$. Let $M$ be a locally finite dimensional graded $\L$-module which is bounded above and has the property that $\Ext^i_{\Gr \L}(T \gsh{-j},M)=0$ whenever $i \neq j$. Then
\begin{itemize}
\item[(a)] $G_T(M)$ is a locally finite dimensional graded $\G$-module (stalk complex concentrated in cohomological degree $0$), and $G_T(M)_j=0$ whenever $j<0$.
\item[(b)] $M$ is a finitely cogenerated graded $\L$-module, and $M_j=0$ whenever $j>0$.
\item[(c)] The counit map $\phi_M \colon F_T G_T(M) \rightarrow M$ is an isomorphism.
\item[(d)] $G_T(M)$ is a finitely generated graded $\G$-module.
\end{itemize}
\end{prop}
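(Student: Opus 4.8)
The plan is to treat part (a) by a direct computation, to obtain the counit isomorphism in (c) by a dévissage onto the module $T$, and finally to deduce the grading and finiteness assertions (b) and (d) from (a) and (c) together with the duality of Theorem~\ref{opp}.

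For (a) I would start from the identity $(H^i G_T(M))_j \simeq \Ext^{i+j}_{\Gr \L}(T, M\gsh j)$ of Proposition~\ref{rhom}(e). Since graded extension groups are invariant under a simultaneous shift of both arguments, $\Ext^{i+j}_{\Gr \L}(T, M\gsh j) \simeq \Ext^{i+j}_{\Gr \L}(T\gsh{-j}, M)$, and the hypothesis on $M$ forces this to vanish unless $i+j=j$, that is, unless $i=0$. Hence $G_T(M)$ is a stalk complex, a graded $\G$-module with $(G_T(M))_j \simeq \Ext^{j}_{\Gr \L}(T\gsh{-j}, M)$, which is $0$ for $j<0$ because negative extension groups vanish. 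Local finite dimensionality of each $(G_T(M))_j$ I would get from a graded projective resolution of $T$: although the syzygies need not be finitely generated, in each internal degree the resolution is finitely generated (as in the proof of Proposition~\ref{ungr}), and since $M$ is locally finite dimensional and bounded above, only finitely many of these finitely generated summands contribute to a fixed graded component, so each $\Ext^{j}_{\Gr \L}(T\gsh{-j}, M)$ is finite dimensional.

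The heart of the argument is (c). Here I would use that the counit is already an isomorphism on $T$ by Proposition~\ref{rhom}(b), and that the class of objects on which $\phi$ is an isomorphism is closed under triangles, the shift $[1]$, and the graded shift $\gsh{1}$ (via Proposition~\ref{rhom}(c),(d)); it therefore contains the triangulated subcategory generated by the shifts $T\gsh i$. The task is then to place $M$ inside this subcategory. Because $T$ is a tilting $\L_0$-module and $\gldim \L_0<\infty$, the algebra $B=[\End_{\L_0}(T)]^{\op}$ has finite global dimension, so every $\L_0$-module lies in the thick subcategory of $\mathcal D^b(\mod \L_0)$ generated by $T$. Applying this degree by degree to the layers of the internal grading of $M$ — a filtration that is locally finite because $M$ is bounded above and locally finite dimensional — I would assemble $M$ from shifted copies of $T$ and conclude that $\phi_M$ is an isomorphism.

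Finally, (b) and (d) follow by bookkeeping. Once $\phi_M \colon F_T G_T(M) \to M$ is an isomorphism and $G_T(M)$ is concentrated in internal degrees $\geq 0$ by (a), the shift behaviour of the functors in Proposition~\ref{rhom}(c),(d) transfers this to $M_j=0$ for $j>0$. For the finiteness statements I would pass to $\L^{\op}$, which is $DT$-Koszul by Theorem~\ref{opp}: under the graded duality $D$ the module $DM$ satisfies the analogous hypothesis over $\L^{\op}$, and ``finitely cogenerated'' for $M$ becomes ``finitely generated'' for $DM$, so the two finiteness assertions in (b) and (d) are interchanged by $D$ and it suffices to prove one of them, which is forced by the finite global dimension of $\L_0$ making the relevant resolutions finite. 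I expect the main obstacle to be the dévissage in (c): since $T$ may have non-finitely-generated syzygies over $\L$, the functors $G_T$ and $F_T$ involve infinite products, and the internal degree filtration of $M$ is in general infinite. I would need to verify that the construction converges — that in each fixed cohomological and internal degree only finitely many steps of the filtration contribute — and that the extensions of the $\L_0$-layers computed in $\Gr \L$ are controlled by those computed over $\L_0$. Reconciling these two gradings is where the real work lies.
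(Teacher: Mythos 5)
Your part (a) matches the paper's argument and is fine. The real problems are in (c) and in your treatment of (b)/(d). For (c), the d\'evissage onto $T$ only places \emph{finite-dimensional} graded modules in the triangulated subcategory generated by the objects $T\gsh i$: since $M$ is bounded above but in general unbounded below, its internal-degree filtration is infinite, and $M$ simply does not lie in that subcategory. You flag this yourself as ``where the real work lies,'' but that work is precisely the content of the proof and cannot be deferred. The paper resolves it by fixing an internal degree $l$, forming the triangle $F_TG_T(M_{\geq l})\to F_TG_T(M)\to F_TG_T(M_{<l})$, and showing that $F_TG_T(M_{<l})$ contributes nothing in degree $l$; that vanishing is itself nontrivial and rests on the computation $D(H^iF_TG_T(U))_l\simeq\Hom_{\mathcal D\Gr\G}(G_T(U),DT\gsh{-l}[-l-i])$ together with the fact (from Proposition \ref{rhom}(e)) that $(H^iG_T(U))_j=0$ for $j\leq -l$ when $U=M_{<l}$. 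Only after this does the finite-dimensional case, applied to $M_{\geq l}$, give $(\phi_M)_l$ an isomorphism degree by degree. Without this convergence argument your proof of (c) is incomplete.

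The proposed reduction of (d) to (b) via Theorem \ref{opp} does not work: statement (d) concerns finite generation of $G_T(M)$ as a module over the Koszul dual algebra $\G$, whereas the duality $D$ relates graded $\L$-modules to graded $\L^{\op}$-modules; these are statements about modules over different algebras and are not interchanged by $D$. Moreover, even for (b) you give no actual argument --- ``forced by the finite global dimension of $\L_0$ making the relevant resolutions finite'' is not a proof. The paper's proof of (b) uses the coresolution $0\to\L_0\to T_0\to\cdots\to T_n\to 0$ in $\add T$ (available because $T$ is a tilting $\L_0$-module), adjointness of $F_T$ and $G_T$, and the fact from (a) that $G_T(M)$ is a stalk module with $G_T(T_l)$ projective, to force the graded socle of $M$ into degrees $\{-n,\ldots,0\}$; part (d) is proved by a parallel (not formally dual) argument over $\G$, using a resolution of $D\G_0$ by $\add DT$ and the isomorphism $F_TG_T(M)\simeq M$ from (c) to bound the support of the graded top of $G_T(M)$. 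Note also that the logical order matters: (b) and (d) each depend on (a), and (d) depends on (c), so the finiteness statements cannot be established before the counit isomorphism as your outline suggests.
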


\begin{proof}
(a) Since $\Ext^i_{\Gr \L}(T \gsh{-j},M)=0$ whenever $i \neq j$, it follows from Proposition \ref{rhom}(e) that $(H^i G_T(M))_j \simeq \Ext^{i+j}_{\Gr \L}(T \gsh {-j},M)=0$ whenever $i \neq 0$. Since $G_T(M)$ has nonzero cohomology only in cohomological degree $0$, we identify $G_T(M)$ with the graded module $H^0 G_T(M)$. We have $$G_T(M)_j=(H^0 G_T(M))_j \simeq \Ext^j_{\Gr \L}(T \gsh {-j},M)$$ for any $j \in \Z$, so $G_T(M)_j=0$ whenever $j<0$. Since $T$ has a locally finite dimensional graded projective $\L$-resolution, we have that $\Ext^j_{\Gr \L}(T \gsh {-j},M)$ is finite dimensional for each $j \in \Z$, so $G_T(M)$ is locally finite dimensional.

(b) Since $M$ is locally finite dimensional and bounded above, it is cogenerated by its graded socle $\soc_{\gr} M$. If $\soc_{\gr} M$ has support in degree $j$, then $\Hom_{\Gr \L}(\L_0 \gsh j,M) \neq 0$. Let $0 \to \L_0 \to T_0 \to T_1 \to \ldots \to T_n \to 0$ be a coresolution of $\L_0$ with $T_i$ in $\add T$ for all $0 \leq i \leq n$. Since $\Hom_{\Gr \L}(\L_0 \gsh j,M)=\Hom_{\mathcal D \Gr \L}(\L_0 \gsh j,M) \neq 0$, we can find an integer $0 \leq l \leq n$ such that $\Hom_{\mathcal D \Gr \L}(T_l \gsh j [-l],M) \neq 0$. Let $P$ be the graded projective $\G$-module $P=G_T(T_l)$. Since $F_T G_T(T_l) \simeq T_l$, we have
\begin{eqnarray*}
\Hom_{\mathcal D \Gr \L}(T_l \gsh j [-l],M) &\simeq& \Hom_{\mathcal D \Gr \G}(P \gsh {-j}[-j-l],G_T(M))\\ &\simeq& \Ext^{j+l}_{\Gr \G}(P \gsh {-j},G_T(M)).
\end{eqnarray*}
Since $\Hom_{\mathcal D \Gr \L}(T_l \gsh j [-l],M) \neq 0$ and $P$ is projective, we have $j+l=0$, and therefore $-n \leq j \leq 0$. So the degrees where $\soc_{\gr} M$ has support form a subset of $\{-n,\ldots,0\}$. Since $M$ is locally finite dimensional, it means that $M$ is finitely cogenerated. We also conclude that $M_j=0$ whenever $j>0$.

(c) Since $T$ is graded self-orthogonal, the map $\phi_T \colon F_T G_T (T) \to T$ is an isomorphism by Proposition \ref{rhom}(b). Since $T$ is a tilting $\L_0$-module, it follows from the Five Lemma that $\phi_{\L_0} \colon F_T G_T(\L_0) \rightarrow \L_0$ is an isomorphism. Since $\gldim \L_0 < \infty$, it  also follows from the Five Lemma that $\phi_N \colon F_T G_T(N) \rightarrow N$ is an isomorphism for any finitely generated $\L_0$-module $N$. By Proposition \ref{rhom}(d), it follows that $\phi_{N \gsh j} \colon F_T G_T(N \gsh j) \rightarrow N \gsh j$ is an isomorphism for any $j \in \Z$. Further use of the Five Lemma allows us to conclude that $\phi_N \colon F_T G_T(N) \rightarrow N$ is an isomorphism for any finite dimensional graded $\L$-module $N$.

We next prove that $F_T G_T(M)$ has nonzero cohomology only in cohomological degree $0$. For any $l>0$, Proposition \ref{rhom}(f) implies that
\begin{eqnarray*}
D(H^i F_T G_T(M))_l &\simeq& \Hom_{\mathcal D \Gr \L} (F_T G_T(M),D \L \gsh l [-i])\\ &\simeq& \Hom_{\mathcal D \Gr \G}(G_T(M),DT \gsh {-l} [-l-i])\\&=&0,
\end{eqnarray*}
since $DT$ is concentrated in degree $0$ and $G_T(M)_j=0$ whenever $j<0$.
Now suppose $l \leq 0$ is an integer, and let $U=M_{< l}$. By Proposition \ref{rhom}(e), we have $(H^i G_T(U))_j \simeq \Ext^{i+j}_{\Gr \L}(T,U \gsh j)$, which is $0$ whenever $j \leq -l$, since $T$ is concentrated in degree $0$. So
\begin{eqnarray*}
D(H^i F_T G_T(U))_l &\simeq& \Hom_{\mathcal D \Gr \L} (F_T G_T(U),D\L \gsh l [-i])\\ &\simeq& \Hom_{\mathcal D \Gr \G}(G_T(U),DT \gsh {-l} [-l-i])\\&=&0
\end{eqnarray*}
for all $i \in \Z$, since $DT$ is concentrated in degree $0$. There is a triangle $$F_T G_T(M_{\geq l}) \rightarrow F_T G_T(M) \rightarrow F_T G_T(U) \rightarrow F_T G_T(M_{\geq l})[1],$$ so the cohomology long-exact sequence gives $$(H^i F_T G_T(M_{\geq l}))_l \simeq (H^i F_T G_T(M))_l$$ for all $i \in \Z$. Since $M_{\geq l}$ is finite dimensional, the map $$\phi_{M_{\geq l}} \colon F_T G_T(M_{\geq l}) \rightarrow M_{\geq l}$$ is an isomorphism. We conclude that $(H^i F_T G_T(M))_l=0$ for all $i \neq0$ and that $(H^0 F_T G_T(M))_l \simeq M_l$.

Since $F_T G_T(M)$ has nonzero cohomology only in cohomological degree $0$, we can identify $F_T G_T(M)$ with the graded module $H^0 F_T G_T(M)$. We want to prove that $\phi_M \colon F_T G_T(M) \rightarrow M$ is an isomorphism. If $l>0$, then $(F_T G_T(M))_l=0=M_l$. Suppose $l \leq 0$. We have the following morphism of triangles.
$$\xymatrix @M=1.5ex {F_T G_T(U)[-1] \ar[r] \ar[d]^{\phi_{U[-1]}} & F_T G_T(M_{\geq l}) \ar[r] \ar[d]^{\wr} & F_T G_T(M) \ar[r] \ar[d]^{\phi_M} & F_T G_T(U) \ar[d]^{\phi_U} \\U[-1] \ar[r] &
M_{\geq l} \ar[r] & M \ar[r]& U.}$$
By taking degree $l$ parts we get the diagram
$$\xymatrix @M=1.5ex {0 \ar[r] & M_l \ar[r]^-{\sim} \ar[d]^{\wr} & (F_T G_T(M))_l \ar[r] \ar[d]^{(\phi_M)_l} & 0\\
0 \ar[r] & M_l \ar[r]^\sim & M_l \ar[r] & 0.}$$
So the map $(\phi_M)_l \colon (F_T G_T(M))_l \rightarrow M_l$ is an isomorphism for every $l \in \Z$, and therefore $\phi_M \colon F_T G_T(M) \rightarrow M$ is an isomorphism.

(d) The proof is similar to the proof of part (b). We know from part (a) that $G_T(M)$ is a locally finite dimensional graded $\G$-module that is bounded below. Therefore it is generated by its graded top $\top_{\gr} (G_T(M))=G_T(M)/J(G_T(M))$, where $J(G_T(M))$ denotes the graded Jacobson radical of $G_T(M)$. If $\top_{\gr} (G_T(M))$ has support in degree $j$, then $$\Hom_{\Gr \G}(G_T(M),D \G_0 \gsh j) \neq 0.$$ By Proposition \ref{rhom}(f), the $\G$-module $DT \simeq G_T(D\L)$ is concentrated in degree $0$. Considered as a module over $\G_0 \cong [\End_\L(T)]^{\op} \cong [\End_{\L_0}(T)]^{\op}$, the module $DT$ is a cotilting module. Let $0 \to U_m \to \ldots \to U_1 \to U_0 \to D\G_0 \to 0$ be a resolution of $D\G_0$ with $U_i$ in $\add DT$ for all $0 \leq i \leq m$. Since $\Hom_{\Gr \G}(G_T(M),D \G_0 \gsh j) = \Hom_{\mathcal D \Gr \G}(G_T(M),D \G_0 \gsh j) \neq 0$, there is an integer $0 \leq l \leq m$ such that $\Hom_{\mathcal D \Gr \G}(G_T(M),U_l \gsh j [l]) \neq 0$. Let $I$ be the graded injective $\L$-module such that $G_T(I)=U_l$. Since $F_T G_T(M) \simeq M$, we have
\begin{eqnarray*}
\Hom_{\mathcal D \Gr \G}(G_T(M),U_l \gsh j [l]) &\simeq& \Hom_{\mathcal D \Gr \L}(M,I \gsh {-j}[l-j])\\ &\simeq& \Ext^{l-j}_{\Gr \L}(M,I \gsh {-j}).
\end{eqnarray*}
Since $\Hom_{\mathcal D \Gr \G}(G_T(M),U_l \gsh j [l]) \neq 0$ and $I$ is injective, we have $l-j=0$, and therefore $0 \leq j \leq m$. So the degrees where $\top_{\gr} (G_T(M))$ has support form a subset of $\{0,\ldots,m\}$. Since $G_T(M)$ is locally finite dimensional, it means that $M$ is finitely generated.
\end{proof}

We can now prove that our new definition is equivalent to the one in \cite{Ext}, as long as $\gldim \L_0 < \infty$. As a consequence the conditions in new definition are strong enough to yield a good duality theory.

\begin{thm}\label{comp}
Let $\L=\bigoplus_{i\geq 0} \L_i$ be a graded algebra with $\gldim \L_0 < \infty$. Let $T$ be a graded $\L$-module concentrated in degree $0$. Suppose $T$ is graded self-orthogonal as a $\L$-module. Then $T$ is a tilting $\L_0$-module if and only if the counit map $\phi_{D\L} \colon F_T G_T(D \L) \rightarrow D \L$ is an isomorphism.
\end{thm}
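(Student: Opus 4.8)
The two implications have quite different characters: one direction is essentially a single application of Proposition~\ref{fincog}, while the other requires reconstructing the tilting conditions from a single counit isomorphism, and this is where the real work lies.

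\emph{The forward direction.} Assuming $T$ is a tilting $\L_0$-module, the pair $(\L,T)$ satisfies Definition~\ref{nydef}, so I would simply apply Proposition~\ref{fincog}(c) to $M=D\L$ after checking its hypotheses. Since $\L$ is locally finite dimensional, so is $D\L$, and because $\L$ is concentrated in non-negative degrees, $D\L$ is concentrated in degrees $\leq 0$ and is thus bounded above. It remains to verify that $\Ext^i_{\Gr\L}(T\gsh{-j},D\L)=0$ whenever $i\neq j$. As $D\L$ is graded injective these groups vanish for $i>0$, and for $i=0$ the standard adjunction gives $\Hom_{\Gr\L}(T\gsh{-j},D\L)\simeq\Hom_k(T_j,k)=(DT)_{-j}$, which is zero unless $j=0$ because $T$, hence $DT$, is concentrated in degree $0$. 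So the only possibly nonzero group sits at $i=j=0$, the hypothesis of Proposition~\ref{fincog} holds, and part (c) yields that $\phi_{D\L}$ is an isomorphism. Note this direction genuinely uses condition (i) of Definition~\ref{nydef}, since Proposition~\ref{fincog} presupposes that $T$ is tilting.

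\emph{The backward direction.} Now suppose $\phi_{D\L}$ is an isomorphism; I must recover that $T$ is tilting over $\L_0$. Since $\gldim\L_0<\infty$, condition (i) of the definition of tilting module is automatic, and by Proposition~\ref{fin} it suffices to show $T$ is a Wakamatsu tilting $\L_0$-module, i.e.\ that $\Ext^i_{\L_0}(T,T)=0$ for $i>0$ and that there is a finite coresolution $0\to\L_0\to T^0\to\cdots\to T^n\to 0$ with each $T^i$ in $\add T$. The structural input I would exploit is that the functors restrict to compatible correspondences: by Proposition~\ref{rhom}(a),(b) the maps $G_T$ and $F_T$ are mutually quasi-inverse between $\add T$ and the graded projective $\G$-modules, while by Proposition~\ref{rhom}(f) together with the hypothesis $\phi_{D\L}$ iso they match the graded injective $\L$-modules with $\add DT$, and in particular $F_T(DT)\simeq D\L$. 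The first homological condition is partly elementary: a degree-zero splitting over $\L$ of an extension of $\L_0$-modules restricts to a splitting over $\L_0$, so $\Ext^1_{\L_0}(T,T)$ embeds into $\Ext^1_{\Gr\L}(T,T)=0$; the higher vanishing and the coresolution must both be extracted from the reflexivity $F_T(DT)\simeq D\L$.

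My plan for that extraction is to pass to the opposite algebra. By the argument in the proof of Theorem~\ref{opp}, graded self-orthogonality of $T$ over $\L$ gives graded self-orthogonality of $DT$ over $\L^{\op}$, and the duality $D\colon\mathcal D(\Gr\L)\to\mathcal D(\Gr\L^{\op})$ intertwines the functors attached to $(\L,T)$ with those attached to $(\L^{\op},DT)$, sending $\G$ to $\G^{\op}$. Under this intertwining the isomorphism $\phi_{D\L}$ transports to a statement about the projective generator $D(D\L)\cong\L$ of $\Gr\L^{\op}$; using $\gldim\L_0^{\op}=\gldim\L_0<\infty$ and Proposition~\ref{rhom}, the corresponding resolution is finite, and reading it back through $F_T$ and $D$ is designed to produce simultaneously the vanishing $\Ext^{i}_{\L_0}(T,T)=0$ for $i>0$ and the finite $\add T$-coresolution of $\L_0$, which by Proposition~\ref{fin} say exactly that $T$ is tilting.

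\emph{The main obstacle.} The forward direction is immediate once the hypotheses of Proposition~\ref{fincog} are verified; the crux is the converse. In Proposition~\ref{fincog}(c) the isomorphism $\phi$ is propagated \emph{outward} from $T$ by a Five Lemma argument that feeds on the tilting coresolution of $\L_0$; here I must run this in reverse and \emph{recover} that coresolution from $\phi_{D\L}$ alone. The delicate point is the degree and cohomological bookkeeping when $F_T$ converts a projective $\G$-resolution of $DT$ into a complex built from objects of $\add T$: one must show that finiteness (forced by $\gldim\L_0<\infty$) compels this complex to be an honest finite $\add T$-coresolution of $\L_0$ sitting in cohomological degree $0$, and at the same time forces self-orthogonality of $T$ over $\L_0$. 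Controlling this conversion, rather than any isolated $\Ext$-vanishing, is the heart of the argument.
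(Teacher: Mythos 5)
Your forward direction is correct and coincides with the paper's argument: one checks that $\Ext^i_{\Gr \L}(T \gsh{-j},D\L)$ vanishes except at $i=j=0$ and applies Proposition \ref{fincog}(c) to $M=D\L$.

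The backward direction, however, is a plan rather than a proof, and the decisive steps are missing. You propose to apply $F_T$ to a projective $\G$-resolution of $DT$ and let ``finiteness'' force the result to be a finite $\add T$-coresolution of $\L_0$ in cohomological degree zero; but the finiteness you invoke is unjustified and borders on circular, since a graded projective $\G$-resolution of $DT$ has no reason to be finite at this stage ($\gldim \G_0 < \infty$ is only known \emph{after} $T$ is shown to be tilting, and $\gldim \L_0^{\op}<\infty$ does not bound projective dimensions over $\G$). You also explicitly defer the ``degree and cohomological bookkeeping'' showing that the image of the resolution collapses to cohomological degree zero --- but that bookkeeping \emph{is} the converse, not a detail to be controlled later. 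The paper takes a different and shorter route that avoids constructing any coresolution: since $T$ is concentrated in degree $0$, only the degree-zero parts $(Q^{-j})_0$ of a graded projective $\G$-resolution $\mathcal Q$ of $DT$ contribute to $T \otimes_{\Gr \G} \mathcal Q$, and these form a projective $\G_0$-resolution of $DT$; hence $\Tor^{\Gr \G}_j(T,DT) \simeq \Tor^{\G_0}_j(T,DT)$, and the hypothesis that $(\phi_{D\L})_0$ is an isomorphism becomes exactly the statement that the \emph{ungraded} counit $\bar F_T \bar G_T((D\L)_0) \to (D\L)_0$ for the pair $(\L_0,\G_0)$ is an isomorphism. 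This reduces the graded problem to a known ungraded one: by \cite[Proposition 8.3]{Ext} (see also \cite[Proposition 5]{Self}) a self-orthogonal module with this property is Wakamatsu tilting, and Proposition \ref{fin} upgrades this to tilting using $\gldim \L_0 < \infty$. Your detour through $\L^{\op}$ and Theorem \ref{opp} is not needed and does not substitute for this reduction. (Your observation that $\Ext^1_{\L_0}(T,T)$ embeds into $\Ext^1_{\Gr \L}(T,T)$ is fine, but the higher self-orthogonality of $T$ over $\L_0$ also follows directly by restricting a minimal graded projective $\L$-resolution of $T$ to degree zero; it does not need to be ``extracted from $F_T(DT)\simeq D\L$.'')
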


\begin{proof}
Assume $T$ is a tilting $\L_0$-module. Since $\Ext^i_{\Gr \L}(T \gsh{-j},D\L)=0$ except when $i=j=0$, by Proposition \ref{fincog}(c) the counit map $\phi_{D\L} \colon F_T G_T(D \L) \rightarrow D \L$ is an isomorphism.

For the converse, suppose $\phi_{D\L} \colon F_T G_T(D \L) \rightarrow D \L$ is an isomorphism. From Proposition \ref{rhom}(f) we know that $G_T(D \L) \simeq \Hom_{\Gr \L}(T, D\L) \simeq DT$. Let $$\mathcal Q \colon \ldots \rightarrow Q^{-(j+1)} \rightarrow Q^{-j} \rightarrow Q^{-(j-1)}  \rightarrow \ldots \rightarrow Q^{-1} \rightarrow Q^0$$ be a graded projective $\G$-resolution of $DT$. Then the complex
\begin{multline*}
T \otimes_{\Gr \G} \mathcal Q \colon \ldots \rightarrow T \otimes_{\Gr \G} Q^{-(j+1)} \rightarrow T \otimes_{\Gr \G} Q^{-j} \rightarrow T \otimes_{\Gr \G} Q^{-(j-1)} \rightarrow \ldots\\ \ldots \rightarrow T \otimes_{\Gr \G} Q^{-1} \rightarrow T \otimes_{\Gr \G} Q^0
\end{multline*} computes the $\Tor$ groups $\Tor^{\Gr \G}_j(T, DT)$.
Since $T$ is concentrated in degree $0$, this complex is isomorphic to \begin{multline*}
T \otimes_{\G_0} \mathcal Q_0 \colon \ldots \rightarrow T \otimes_{\G_0} (Q^{-(j+1)})_0 \rightarrow T \otimes_{\G_0} (Q^{-j})_0 \rightarrow T \otimes_{\G_0} (Q^{-(j-1)})_0 \rightarrow \ldots\\ \ldots \rightarrow T \otimes_{\G_0} (Q^{-1})_0 \rightarrow T \otimes_{\G_0} (Q^0)_0,
\end{multline*} where $\mathcal Q_0$ is the complex $$\mathcal Q_0 \colon \ldots \rightarrow (Q^{-(j+1)})_0 \rightarrow (Q^{-j})_0 \rightarrow (Q^{-(j-1)})_0  \rightarrow \ldots \rightarrow (Q^{-1})_0 \rightarrow (Q^0)_0.$$ This complex is a projective $\L_0$-resolution of $DT$. We conclude that $$\Tor^{\G_0}_j(T, DT) \simeq \Tor^{\Gr \G}_j(T, DT)$$ for all $j \geq 0$.
Since $(\phi_{D\L})_0 \colon (F_T(DT))_0 \rightarrow (D\L)_0$ is an isomorphism, we have $\Tor^{\Gr \G}_j(T, DT)=0$ for all $j \neq 0$ and $(F_T(DT))_0 \simeq T \otimes_{\Gr \G} DT \simeq T \otimes_{\G_0} DT \simeq (D\L)_0$.

Consider the pair of adjoint functors $$\bar G_T=\mathbb R \Hom_{\L_0}(T,-) \colon \mathcal D^b(\mod \L_0) \to \mathcal D^b(\mod \G_0)$$ and $$\bar F_T= T \otimes^{\mathbb L}_{\G_0} - \colon \mathcal D^b(\mod \G_0) \rightarrow \mathcal D^b(\mod \L_0).$$ We have just shown that $\bar \phi_{(D\L)_0} \colon \bar F_T \bar G_T ((D\L)_0) \simeq \bar F_T (DT) \rightarrow (D\L)_0$ is an isomorphism, where $\bar \phi$ is the counit of the adjunction. Since $T$ is a self-orthogonal $\L_0$-module, it follows from \cite[Proposition 8.3]{Ext} (see also \cite[Proposition 5]{Self}) that $T$ is a Wakamatsu tilting $\L_0$-module. Since $\gldim \L_0 < \infty$, by Proposition \ref{fin} the module $T$ must be a tilting $\L_0$-module.
\end{proof}

\subsection{Koszul duality for algebras}

We next prove the duality theorem for $T$-Koszul algebras.

\begin{thm}\label{kosdua}
Let $\L=\bigoplus_{i\geq 0} \L_i$ be a graded algebra with $\gldim \L_0 < \infty$. Suppose $\L$ is a Koszul algebra with respect to a module $T$. Then \begin{itemize}
\item[(a)]$\gldim \G_0 < \infty$, and $\G$ is a Koszul algebra with respect to $_{\G}DT$.
\item[(b)] There is an isomorphism of graded algebras $$\L \cong [\bigoplus_{i\geq 0} \Ext_\G^i (DT,DT)]^{\op} \cong [\bigoplus_{i\geq 0} \Ext^i_{\Gr \G}(DT,DT \gsh i)]^{\op}.$$
\end{itemize}
\end{thm}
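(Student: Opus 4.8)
The plan is to treat (a) and (b) together, since the engine for both is the fact that $G_T$ is fully faithful out of the injective object $D\L$ and its shifts. For the statements about $\G_0$ in (a) I would first invoke ordinary tilting theory: we have $\G_0 \cong [\End_{\L_0}(T)]^{\op}$, and since $T$ is a tilting $\L_0$-module over an algebra of finite global dimension, the derived equivalence attached to $T$ forces $\gldim \G_0 < \infty$. Moreover the $\G_0$-module $_\G DT \simeq \Hom_{\L_0}(T, D\L_0)$ is a cotilting $\G_0$-module by tilting theory, hence, using $\gldim \G_0 < \infty$ and Proposition \ref{fin}, it is a tilting $\G_0$-module. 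This gives condition (i) of Definition \ref{nydef} for $\G$.

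The key observation for condition (ii), and for all of (b), is that because $T$ is a tilting module, Theorem \ref{comp} tells us $\phi_{D\L} \colon F_T G_T(D\L) \to D\L$ is an isomorphism, and by Proposition \ref{rhom}(d) the same holds for every shift $D\L \gsh{-j}$. For a left adjoint $F_T$ with right adjoint $G_T$ and counit $\phi$, the map $\Hom(Y, X) \to \Hom(G_T Y, G_T X)$ induced by $G_T$ becomes, after the adjunction isomorphism, precomposition with $\phi_Y$; hence whenever $\phi_Y$ is invertible the functor $G_T$ is fully faithful out of $Y$. Taking $Y = D\L$, and combining $G_T(D\L) \simeq DT$ (Proposition \ref{rhom}(f)) with $G_T(D\L \gsh{-j}) \simeq DT \gsh j [j]$ (Proposition \ref{rhom}(c)), I obtain a functorial isomorphism
\[
\Ext^i_{\Gr \G}(DT, DT \gsh j) \simeq \Ext^{i-j}_{\Gr \L}(D\L, D\L \gsh{-j})
\]
for all $i,j$. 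Condition (ii) is then immediate: $D\L \gsh{-j}$ is an injective graded $\L$-module, so the right-hand side vanishes whenever $i - j \neq 0$, i.e.\ whenever $i \neq j$. Thus $_\G DT$ is graded self-orthogonal over $\G$, completing (a).

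For (b) the second isomorphism is exactly Corollary \ref{iso} applied to the graded self-orthogonal module $DT$ over $\G$, so only the first isomorphism requires work. Here I would run the ``codify as a category'' argument of Corollary \ref{iso}. By Proposition \ref{rhom}(c),(f) the functor $G_T$ carries the family $\{D\L \gsh{-i}\}_{i \in \Z}$ onto the family $\{DT \gsh i [i]\}_{i \in \Z}$, and the full faithfulness just established makes this a composition-preserving equivalence of the corresponding full subcategories of $\mathcal D(\Gr \L)$ and $\mathcal D(\Gr \G)$. The second family codifies $[\bigoplus_{i} \Ext^i_{\Gr \G}(DT, DT \gsh i)]^{\op}$, while the duality $D$ relates the morphism algebra of the first family $\{D\L \gsh{-i}\}$ to the graded endomorphism algebra of the free rank-one $\L^{\op}$-module, which recovers $\L$ (up to the opposite introduced by the contravariance of $D$). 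Stringing these identifications together yields $\L \cong [\bigoplus_{i} \Ext^i_{\Gr \G}(DT, DT \gsh i)]^{\op}$.

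The main obstacle lies entirely in this last step: keeping the multiplicative structure, and the various $(-)^{\op}$'s, straight. The underlying vector-space isomorphism $\Ext^i_{\Gr \G}(DT, DT \gsh i) \simeq \L_i$ is cheap, but upgrading it to an isomorphism of \emph{algebras} requires knowing that all the identifications respect products. This is precisely why I route everything through $G_T$ as a functor, so that composition is preserved automatically, and through the contravariant duality $D$, which reverses composition and so accounts for one opposite; verifying that the surviving $\op$ matches the one in the statement is the delicate bookkeeping. The cleanest consistency check is the degenerate case $\L = \L_0$, where the assertion collapses to the tilting-theoretic isomorphism $A \cong [\End_B(DT)]^{\op}$ recalled in Section 2.2.
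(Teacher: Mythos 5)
Your proposal is correct and follows essentially the same route as the paper: both parts rest on the same tilting-theoretic identification of $\G_0$ and $DT$, the same use of Theorem \ref{comp} plus the adjunction to show $G_T$ is fully faithful on $D\L$ and its shifts (yielding $\Ext^i_{\Gr \G}(DT,DT\gsh j)\simeq \Ext^{i-j}_{\Gr\L}(D\L,D\L\gsh{-j})$ and hence self-orthogonality from injectivity of $D\L$), and the same codification of $\L$ and of the graded $\Ext$-algebra as the full subcategories $\{D\L\gsh{-i}\}$ and $\{DT\gsh i[i]\}$ matched by $G_T$. The paper likewise leaves the second isomorphism in (b) to Corollary \ref{iso}, so there is nothing missing.
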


\begin{proof}
(a) We have $\G_0 \cong [\End_\L(T)]^{\op} \cong [\End_{\L_0}(T)]^{\op}$. Since $\gldim \L_0 < \infty$ and $T$ is a tilting $\L_0$-module, also $\gldim \G_0 < \infty$. Considered as a $\G_0$-module, the module $DT \simeq G_T(D\L)$ is a cotilting module, and by Proposition \ref{fin} it is also a tilting module. Using Proposition \ref{rhom}(e), the adjointness of $G_T$ and $F_T$, and Theorem \ref{comp}, we get
\begin{eqnarray*}
\Ext^i_{\Gr \G}(DT,DT \gsh j)&=&\Hom_{\mathcal D \Gr \G}(DT,DT\gsh j [i])\\ &\simeq &\Hom_{\mathcal D \Gr \G}(G_T(D\L),G_T(D\L)\gsh j [i])\\ &\simeq& \Hom_{\mathcal D \Gr \L}(F_T G_T(D\L),D\L \gsh {-j}[i-j])\\ &\simeq& \Hom_{\mathcal D \Gr \L}(D\L,D\L \gsh {-j}[i-j])\\ &\simeq&\Ext^{i-j}_{\Gr \L}(D\L,D\L \gsh {-j})
\end{eqnarray*}
Since $D\L$ is injective, we have $\Ext^i_{\Gr \G}(DT,DT \gsh j)=0$ whenever $i \neq j$. So $DT$ is a graded self-orthogonal $\G$-module, and $\G$ is a Koszul algebra with respect to $DT$.

(b) For each $i \geq 0$, we have
\begin{eqnarray*}
\Ext^i_{\Gr \G}(DT,DT \gsh i)&=&\Hom_{\mathcal D \Gr \G}(DT,DT\gsh i [i])\\ &\simeq& \Hom_{\mathcal D \Gr \L}(D\L,D\L \gsh {-i})\\ &\simeq&\L_i.
\end{eqnarray*}
These canonical isomorphisms define a bijective map $$\psi \colon [\bigoplus_{i\geq 0} \Ext^i_{\Gr \G}(DT,DT \gsh i)]^{\op} \rightarrow \L.$$ The graded algebra $[\bigoplus_{i\geq 0} \Ext^i_{\Gr \G}(DT,DT \gsh i)]^{\op}$ can be codified as the full subcategory $\mathcal U=\{DT \gsh i [i] \mid i \in \Z\} \subseteq \mathcal D (\Gr \G)$.  The graded algebra $\L$ can be codified as the full subcategory $\mathcal V=\{D\L \gsh {-i} \mid i \in \Z\} \subseteq \mathcal D (\Gr \L)$. Combining Theorem \ref{comp} with Proposition \ref{rhom}(d), we find that $F_T G_T(D\L \gsh {-i}) \simeq (D\L \gsh {-i})$ for all $i \in \Z$. By the adjointness of $F_T$ and $G_T$, the category $\mathcal V$ is equivalent to its essential image under $G_T$. By Proposition \ref{rhom}(c)(f) we have $G_T(D\L \gsh {-i}) \simeq DT \gsh i [i]$ for all $i \in \Z$. So $\mathcal U$ and $\mathcal V$ are equivalent as categories. Therefore $[\bigoplus_{i\geq 0} \Ext^i_{\Gr \G}(DT,DT \gsh i)]^{\op}$ and $\L$ are isomorphic as graded algebras.
\end{proof}

We call the pair $(\G,DT)$ the \emph{Koszul dual} of $(\L,T)$.

If $\L=\L_0$, then $\G=\G_0=[\End_{\L_0}(T)]^{\op}$. Similarly $[\bigoplus_{i\geq 0} \Ext_\G^i (DT,DT)]^{\op} \simeq [\End_{\G_0}(DT)]^{\op}$, so the theorem just states some basic facts in tilting theory.

If $\L_0 \cong k^{\times r}$, the theorem above is the well known duality for classical Koszul algebras \cite[Theorem 2.10.2]{Bei}.

\subsection{Koszul duality for modules}

As in the case of classical Koszul algebras, the functor $G_T =\mathbb R \Hom_{\Gr \L}(X,-) \colon \mathcal D (\Gr \L) \to \mathcal D(\Gr \G)$ is an equivalence only in special cases (when $\L$ is finite-dimensional and of finite global dimension \cite{Ext}), but restricts in different ways to equivalences between certain important pairs of (full) subcategories. We get the largest such equivalence if we set $$\mathcal T=\{M \in \mathcal D (\Gr \L) \mid \phi_M \colon F_T G_T (M) \rightarrow M \text{ is an isomorphism} \},$$ and let $\mathcal S$ be the essential image of $\mathcal T$ under $G_T$. Both $\mathcal T$ and $\mathcal S$ are closed under triangles, the translation functor, and graded shifts. By the adjointness of $G_T$ and $F_T$, there is an equivalence of triangulated categories $G_T \colon \mathcal T \rightarrow \mathcal S$.

Using Proposition \ref{fincog}, we find the largest equivalence between categories of suitably bounded locally finite-dimensional modules given by $G_T$. Define the category of \emph{$T$-coKoszul $\L$-modules} $$\cK_T(\L)=\{M \in \cgr \L \mid \Ext^i_{\Gr \L}(T \gsh{-j},M)=0 \text{ whenever } i \neq j \}$$ and the category of \emph{$T$-Koszul $\G$-modules} $$\K_T(\G)=\{N \in \gr \G \mid \Ext^i_{\Gr \G}(N,DT \gsh j)=0 \text{ whenever } i \neq j \}.$$ In the case $\L_0 \cong k^{\times r}$, these definitions are in agreement with the classical ones.

\begin{thm}\label{duamod}
The functor $G_T \colon \mathcal D (\Gr \L) \to \mathcal D(\Gr \G)$ restricts to an equivalence $E_T \colon \cK_T(\L) \rightarrow \K_T(\G)$, where $E_T=\bigoplus_{i\geq 0} \Ext^i_{\Gr \L}(T \gsh{-i},-).$
$$\xymatrix @M=1.5ex {\mathcal D(\Gr \L) \ar[rr]^{\mathbb R \Hom_{\Gr \L}(X,-)}  && \mathcal D(\Gr \G)\\
\cK_T(\L) \ar[rr]^{E_T} \ar@{^{(}->}[u] && \K_T(\G)
\ar@{^{(}->}[u]}$$
\end{thm}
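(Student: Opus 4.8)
The functor $E_T$ is, by construction, nothing but the restriction of $G_T$: for $M \in \cK_T(\L)$, Proposition \ref{fincog}(a) shows that $G_T(M)$ is a stalk complex concentrated in cohomological degree $0$ with $(G_T M)_j \simeq \Ext^j_{\Gr\L}(T\gsh{-j},M)$ and $(G_T M)_j = 0$ for $j<0$, so $G_T(M) \simeq \bigoplus_{i \geq 0} \Ext^i_{\Gr\L}(T\gsh{-i},M) = E_T(M)$ as graded $\G$-modules. The plan is therefore to prove three things: that $G_T$ carries $\cK_T(\L)$ into $\K_T(\G)$, that the resulting functor is fully faithful, and that it is essentially surjective.

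First I would dispose of well-definedness and full faithfulness together. Any $M \in \cK_T(\L)$ is finitely cogenerated, hence locally finite dimensional and bounded above, and satisfies exactly the $\Ext$-vanishing in the hypotheses of Proposition \ref{fincog}; thus Proposition \ref{fincog}(c) gives that $\phi_M$ is an isomorphism, i.e. $\cK_T(\L) \subseteq \mathcal T$. Since $G_T \colon \mathcal T \to \mathcal S$ is an equivalence onto its essential image and $\cK_T(\L)$ is a full subcategory of $\mathcal T$, the restriction $E_T$ is automatically fully faithful. To see that $G_T(M) \in \K_T(\G)$, Proposition \ref{fincog}(a) and (d) give that $G_T(M)$ is a finitely generated graded $\G$-module, and the required self-orthogonality is a direct computation: using $DT \simeq G_T(D\L)$ from Proposition \ref{rhom}(f) together with Proposition \ref{rhom}(c) to rewrite $DT\gsh j \simeq G_T(D\L\gsh{-j})[-j]$, and then invoking adjointness of $F_T$ and $G_T$ with $\phi_M$ an isomorphism, one obtains $\Ext^i_{\Gr\G}(G_T(M),DT\gsh j) \simeq \Ext^{i-j}_{\Gr\L}(M,D\L\gsh{-j})$, which vanishes for $i \neq j$ because $D\L$ is injective.

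Essential surjectivity is the heart of the matter. Since $G_T|_{\mathcal T}$ is an equivalence onto $\mathcal S$ with quasi-inverse $F_T|_{\mathcal S}$, it suffices to show that for every $N \in \K_T(\G)$ the object $M := F_T(N)$ lies in $\cK_T(\L)$ (in particular in $\mathcal T$) and that the unit $N \to G_T F_T(N)$ is an isomorphism. In other words, I must establish the statement dual to Proposition \ref{fincog}, with $F_T$ in the role of $G_T$ and $\K_T(\G)$ in the role of $\cK_T(\L)$. The clean way to obtain this is to exploit the symmetry of the construction: by Theorem \ref{kosdua} the pair $(\G,DT)$ is itself a Koszul pair whose Koszul dual is $(\L,T)$, and by Theorem \ref{opp} the algebra $\G^{\op}$ is Koszul with respect to $D(DT)$. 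Applying the graded duality $D$ and the $\Ext$-duality of Section 1 converts the defining condition of $\K_T(\G)$ (vanishing of maps into $DT$, inside $\gr\G$) into a coKoszul condition over $\G^{\op}$ (vanishing of maps out of the dual module, inside $\cgr\G^{\op}$). I would then feed this into the already-established direction of the theorem applied to the dual pair and transport the output back along $D$; the double-duality identification in Theorem \ref{kosdua}(b) guarantees that the functor $\K_T(\G) \to \cK_T(\L)$ so produced is quasi-inverse to $E_T$.

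The main obstacle is precisely this last step. The subcategories $\cK_T(\L)$ and $\K_T(\G)$ are defined asymmetrically — one by $\Ext$ out of shifts of $T$ inside $\cgr\L$, the other by $\Ext$ into shifts of $DT$ inside $\gr\G$ — so matching them up requires careful bookkeeping through the graded duality $D$ and Theorems \ref{opp} and \ref{kosdua}, and in particular verifying that the quasi-inverse built from the dual pair genuinely agrees with $F_T$ on $\K_T(\G)$. Once the module categories are correctly identified, all the homological content is already supplied by Propositions \ref{rhom} and \ref{fincog}.
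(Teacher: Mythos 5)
Your first half is correct and in line with the paper's argument: Proposition \ref{fincog}(c) places $\cK_T(\L)$ inside $\mathcal T$, so the restriction of $G_T$ is fully faithful; parts (a) and (d) together with the computation $\Ext^i_{\Gr \G}(G_T(M),DT\gsh j)\simeq \Ext^{i-j}_{\Gr \L}(M,D\L\gsh{-j})$ (which you rightly make explicit, mirroring the proof of Theorem \ref{kosdua}(a)) put the image inside $\K_T(\G)$; and Proposition \ref{rhom}(e) identifies the restricted functor with $E_T$. You also make the correct reduction for essential surjectivity: one must show that for $N\in\K_T(\G)$ the object $F_T(N)$ lies in $\cK_T(\L)$ and the unit $\eta_N\colon N\to G_TF_T(N)$ is an isomorphism, i.e.\ one must prove the statement dual to Proposition \ref{fincog}.

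The gap is in how you propose to obtain that dual statement. Passing to the pair $(\G^{\op},DDT)$ via the graded duality $D$ does produce a fully faithful functor $\Phi=D\circ E_{DDT}\circ D\colon \K_T(\G)\to\cK_T(\L)$, but essential surjectivity of $E_T$ requires $E_T\Phi\simeq\id_{\K_T(\G)}$, equivalently that $\Phi$ agrees with $F_T$ on $\K_T(\G)$, and neither of these follows from what you cite. Theorem \ref{kosdua}(b) is an isomorphism of \emph{algebras}; it does not identify the composite of the two module-level functors with the identity, and two fully faithful functors in opposite directions need not be mutually quasi-inverse. Unwinding $E_T\Phi(N)$ yields a double-$\Ext$ identity of the shape $\bigoplus_i\Ext^i_{\Gr \L^{\op}}(\bigoplus_l\Ext^l_{\Gr \G}(N,DT\gsh l),DT\gsh i)\simeq N$, which is exactly the module-level biduality the theorem is meant to establish, so the detour is circular unless that identification is proved independently. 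The paper instead reruns the argument of Proposition \ref{fincog} with $F_T$ and the unit in place of $G_T$ and the counit: for $N\in\K_T(\G)$ one computes, using Proposition \ref{rhom}(f) and adjointness, that $D(H^iF_T(N))_l\simeq\Hom_{\mathcal D\Gr \G}(N,DT\gsh{-l}[-l-i])=\Ext^{-l-i}_{\Gr \G}(N,DT\gsh{-l})$, which vanishes for $i\neq 0$ by the defining property of $\K_T(\G)$, so $F_T(N)$ is a module; one then checks $F_T(N)\in\cK_T(\L)$ and proves $\eta_N$ is an isomorphism by a degreewise truncation argument dual to the one in the proof of \ref{fincog}(c). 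You should replace the duality detour by this direct dual argument.
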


\begin{proof}
If $M \in \cK_T(\L)$, then by Proposition \ref{fincog}(c) the map $\phi_M \colon F_T G_T(M) \rightarrow M$ is an isomorphism. Hence $\cK_T(\L)$ is a subcategory of $\mathcal T$, and therefore $\cK_T(\L)$ is equivalent to its essential image under $G_T$. Let $G_T(\cK_T(\L))$ denote this essential image. Since $F_T (G_T (M))$ is concentrated in cohomological degree $0$ for any $M \in \cK_T(\L)$, it also follows from Proposition \ref{fincog} that $G_T(\cK_T(\L))$ is a full subcategory of $\K_T(\G)$.

Let $N \in \K_T(\G)$. In a manner analogous to the proof of Proposition \ref{fincog}, we can show that $F_T (N) \in \cK_T(\L)$ and that the unit map $\eta_N \colon N \to G_T F_T (N)$ is an isomorphism. Therefore $G_T(\cK_T(\L))=\K_T(\G)$

As a consequence of Proposition \ref{rhom}(e), the functor $G_T$, when restricted to $\cK_T(\L)$, is isomorphic to $E_T$.
\end{proof}

In the case $\L=\L_0$, any graded $\L$-module decomposes into a direct sum of its graded parts. In this case a locally finite-dimensional module $M=\bigoplus_{i \in \Z} M_i$ is in $\cK_T(\L)$ if and only if (i) $M$ is finite-dimensional, (ii) $M_i=0$ for all $i>0$, and (iii) $\Ext^j_{\L}(T,M_i)=0$ for all $j \neq -i$. Theorem \ref{duamod} is then essentially a reformulation of \cite[Theorem 1.16]{Miya}. The category $_{\L} T^{\perp}$ coincides with the full subcategory of $\cK_T(\L)$ consisting of modules that are cogenerated in degree $0$.

In earlier papers \cite{TKos} \cite{Ext}, Koszul modules were defined to be generated in degree $0$ (and by duality coKoszul modules were defined to be cogenerated in degree $0$), so that one would have $_{\L} T^{\perp}=\cK_T(\L)$ in the $\L=\L_0$ case. It is important to note that this extra provision comes at a price. For general $T$-Koszul algebras, if all coKoszul modules $M$ are required to be cogenerated in degree $0$, then even further, more unnatural, conditions have to be put on such modules $M$ in order to make sure that $E_T(M)$ is always generated in degree $0$. The approach taken in the present paper is to make definitions as simple as possible.

In the classical Koszul case with $\L_0 \cong k^{\times r}$, it follows from the definition that Koszul modules are generated in degree $0$.

Important objects in $\cK_T(\L)$ are $T$ and the graded injective cogenerator $D\L$. Important objects in $\K_T(\G)$ are $DT$ and $\G$. At the core of the equivalence $E_T$ there is a correspondence between indecomposable direct summands of $T$ and indecomposable projective $\G$-modules and another correspondence between indecomposable graded injective $\L$-modules and indecomposable direct summands of $DT$.
\begin{eqnarray*}
&T \overset {E_T} {\longmapsto}  \G, \qquad &D\L \overset {E_T} {\longmapsto} DT,\\&T \underset {F_T} {\longmapsfrom} \G, \qquad &D\L \underset {F_T} {\longmapsfrom} DT.
\end{eqnarray*}

The equivalence $E_T$ can be composed with the duality $D$ and we get a duality $E_T D \simeq \bigoplus_{i\geq 0} \Ext^i_{\Gr \L^{\op}}(-,DT \gsh i) \colon \K_T(\L^{\op}) \rightarrow \K_T(\G)$. In this way indecomposable direct summands of $DT$ on one side correspond to indecomposable projective modules on the other side in both directions.
\begin{eqnarray*}
&DT \overset {E_T D} {\longmapsto}  \G, \qquad &\L \overset {E_T D} {\longmapsto} DT,\\&DT \underset {D F_T} {\longmapsfrom} \G, \qquad &\L \underset {D F_T} {\longmapsfrom} DT.
\end{eqnarray*}

The functor $G_T$ also restricts to equivalences in the style of \cite[Theorem 2.4]{Dua}. We consider here a bounded version of that theorem. Let $\mathcal F_{\gr \L}(T)$ be the full subcategory of $\gr \L$ consisting of modules $M$ having a finite filtration $0=M_0 \subseteq M_1 \subseteq \ldots \subseteq M_t=M$ where the factors $M_i/M_{i-1}$ are graded shifts of direct summands of $T$ for all $1 \leq i \leq t$. A cochain complex of graded modules $\ldots \rightarrow N^{-1} \rightarrow N^0 \rightarrow N^1 \rightarrow \ldots \rightarrow N^{j-1} \rightarrow N^j \rightarrow \ldots$ is called \emph{linear} if $N^j$ is generated in degree $-j$ for all $j \in \Z$. Let $\mathcal L^b(\G)$ denote the category of bounded linear cochain complexes of graded projective $\G$-modules and cochain maps. A homotopy map between linear complexes is always $0$, so $\mathcal L^b(\G)$ can be regarded as a full subcategory of $\mathcal D(\Gr \G)$.

\begin{thm}\label{delta}
The functor $G_T \colon \mathcal D (\Gr \L) \to \mathcal D(\Gr \G)$ restricts to an equivalence $G_T \colon \mathcal F_{\gr \L}(T) \rightarrow \mathcal L^b(\G)$.
$$\xymatrix @M=1.5ex {\mathcal D(\Gr \L) \ar[rr]^{\mathbb R \Hom_{\Gr \L}(X,-)}  && \mathcal D(\Gr \G)\\
\mathcal F_{\gr \L}(T) \ar[rr] \ar@{^{(}->}[u] && \mathcal L^b(\G)
\ar@{^{(}->}[u]}$$
\end{thm}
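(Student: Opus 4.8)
The plan is to show that $G_T$, together with its adjoint $F_T$, restricts to a pair of mutually quasi-inverse functors between $\mathcal F_{\gr \L}(T)$ and $\mathcal L^b(\G)$. I would do this by verifying the two essential-image inclusions $G_T(\mathcal F_{\gr \L}(T)) \subseteq \mathcal L^b(\G)$ and $\mathcal L^b(\G) \subseteq G_T(\mathcal F_{\gr \L}(T))$, after first obtaining full faithfulness essentially for free. The starting point is the behaviour of $G_T$ on the building blocks of the two categories. Writing $T=\bigoplus_{h=1}^r T_{(h)}$ and $P_{(h)}=G_T(T_{(h)})$, Proposition \ref{rhom}(a) identifies $\bigoplus_h P_{(h)}$ with $\G$, so each $P_{(h)}$ is an indecomposable projective $\G$-module generated in degree $0$; combined with Proposition \ref{rhom}(c) this gives $G_T(T_{(h)} \gsh j) \simeq P_{(h)} \gsh {-j} [-j]$. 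This is exactly a one-term linear complex: it is concentrated in cohomological degree $j$, and the projective $P_{(h)} \gsh {-j}$ sitting there is generated in degree $-j$. Thus $G_T$ sends the factors of a $T$-filtration precisely to the indecomposable objects of $\mathcal L^b(\G)$.

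For full faithfulness I would observe that $\mathcal F_{\gr \L}(T) \subseteq \mathcal T$, where $\mathcal T=\{M \mid \phi_M \text{ is an isomorphism}\}$. Indeed $T \in \mathcal T$ by Proposition \ref{rhom}(b), each shift $T_{(h)} \gsh j$ lies in $\mathcal T$ since $\mathcal T$ is closed under graded shifts, and since $\mathcal T$ is closed under triangles an induction on the length of a $T$-filtration places all of $\mathcal F_{\gr \L}(T)$ inside $\mathcal T$. Now $\mathcal F_{\gr \L}(T)$ is a full subcategory of $\mathcal D(\Gr \L)$ and $\mathcal L^b(\G)$ is a full subcategory of $\mathcal D(\Gr \G)$, so the restriction to $\mathcal F_{\gr \L}(T)$ of the equivalence $G_T \colon \mathcal T \rightarrow \mathcal S$ is automatically fully faithful, and only the two essential-image statements remain to be proved.

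For $G_T(\mathcal F_{\gr \L}(T)) \subseteq \mathcal L^b(\G)$ I would induct on the filtration length. Given a short exact sequence $0 \to M' \to M \to T_{(h)} \gsh j \to 0$ with $M' \in \mathcal F_{\gr \L}(T)$, applying $G_T$ yields a triangle exhibiting $G_T(M)$ as the mapping cone of a morphism $P_{(h)} \gsh {-j}[-j-1] \to G_T(M')$. By induction $G_T(M')$ is represented by an honest bounded linear complex of projectives; since the source is a bounded complex of projectives the connecting morphism is represented by a genuine chain map, and the mapping cone is then an honest complex. The degree bookkeeping is what makes it work: the cone agrees with $G_T(M')$ in every cohomological degree except $j$, where it becomes $P_{(h)} \gsh {-j} \oplus G_T(M')^j$, and since both summands are generated in degree $-j$ the cone is again linear. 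The reverse inclusion is proved by induction on the number of nonzero terms, using the brutal-truncation triangle $N'' \to N \to N' \xrightarrow{\partial} N''[1]$ that splits off the top term $N''$ of a linear complex $N$; both $N'$ and $N''$ are again linear with fewer terms, so by induction $N'=G_T(M')$ and $N''=G_T(M'')$ with $M',M'' \in \mathcal F_{\gr \L}(T)$. Because $G_T$ is fully faithful on $\mathcal T$, the map $\partial$ corresponds to a class in $\Ext^1_{\Gr \L}(M',M'')$, hence to an extension $0 \to M'' \to M \to M' \to 0$ with $M \in \mathcal F_{\gr \L}(T)$, and applying $G_T$ to this extension recovers the triangle defining $N$, so $N \simeq G_T(M)$. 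The step I expect to be the main obstacle is precisely this interface between the derived category and honest complexes: one must ensure that the connecting morphisms supplied by the triangulated structure can be realized by chain maps of the correct internal degrees, so that the mapping cones are literally linear complexes of projectives rather than merely objects whose cohomology happens to be linear.
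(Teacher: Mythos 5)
Your proposal is correct and follows essentially the same route as the paper's proof: $\mathcal F_{\gr \L}(T) \subseteq \mathcal T$ gives full faithfulness, induction on the filtration length together with the mapping-cone degree bookkeeping shows the image lands in $\mathcal L^b(\G)$, and induction on the number of nonzero terms, splitting off the top term of a linear complex and lifting the connecting map to a class in $\Ext^1_{\Gr \L}(M',M'')$, gives essential surjectivity. The only slip is the non-load-bearing remark that the one-term complexes are ``the indecomposable objects of $\mathcal L^b(\G)$'' (linear complexes with several terms can also be indecomposable); otherwise your extra care in realizing the connecting morphisms by genuine chain maps of the correct internal degrees is precisely the point the paper leaves implicit.
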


\begin{proof} The direct summands of $T$ are objects in $\mathcal T$. Since $\mathcal T$ is closed under graded shifts and triangles, the category $\mathcal F_{\gr \L}(T)$ is a subcategory of $\mathcal T$. Hence $\mathcal F_{\gr \L}(T)$ is equivalent to its essential image under $G_T$.

If $T_{(h)}$ is a direct summand of $T$, then for any $i \in \Z$ we have that $G_T(T_{(h)} \gsh i) \simeq (G_T(T_{(h)})) \gsh {-i}[-i]$ is isomorphic to an object in $\mathcal L^b(\G)$. Suppose $0 \rightarrow L \rightarrow M \rightarrow N \rightarrow 0$ is an exact sequence of objects in $\mathcal F_{\gr \L}(T)$. If $G_T(L)$ and $G_T(N)$ are isomorphic to objects in $\mathcal L^b(\G)$, then $G_T(M)$ is isomorphic to the bounded linear complex $\cone(G_T(N)[-1] \rightarrow G_T(L))$. By induction on the length of the filtration of $M$, we get that $G_T(M)$ is isomorphic to an object in $\mathcal L^b(\G)$ for any $M$ in $\mathcal F_{\gr \L}(T)$.

Let $$N \colon 0 \rightarrow N^a \rightarrow N^{a+1} \rightarrow \ldots \rightarrow N^{a+n} \rightarrow 0$$ be a bounded linear complex of graded projective $\G$-modules. The subcomplex $$N' \colon 0 \rightarrow N^{a+n} \rightarrow 0$$ is isomorphic to $G_T(T' \gsh {a+n})$ for some $T'$ in $\add T$. The quotient $$N/N' \colon 0 \rightarrow  N^a \rightarrow N^{a+1} \rightarrow \ldots \rightarrow N^{a+n-1} \rightarrow 0$$ is also a bounded linear complex. Since $G_T$ is a triangulated functor, it follows by induction on $n$ that $N \simeq G_T(M)$ for some $M$ in $\mathcal F_{\gr \L}(T)$, and therefore $G_T \colon \mathcal F_{\gr \L}(T) \rightarrow \mathcal L^b(\G)$ is an equivalence.
\end{proof}

We have the following corollary.

\begin{cor} The subcategory $\mathcal F_{\gr \L}(T)$ of $\gr \L$ is closed under direct summands.
\end{cor}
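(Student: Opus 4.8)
The plan is to transport the problem across the equivalence $G_T \colon \mathcal F_{\gr \L}(T) \rightarrow \mathcal L^b(\G)$ of Theorem \ref{delta}, reducing it to the statement that $\mathcal L^b(\G)$ is closed under direct summands inside $\mathcal D(\Gr \G)$. First I would note that $\mathcal T$ is itself closed under direct summands: if $M \simeq M' \oplus M''$ in $\mathcal D(\Gr \L)$, then additivity of $F_T$, $G_T$ and of the counit $\phi$ shows that $\phi_M$ is an isomorphism exactly when both $\phi_{M'}$ and $\phi_{M''}$ are. Thus, given $M \in \mathcal F_{\gr \L}(T)$ and a decomposition $M \simeq M' \oplus M''$ in $\gr \L$ (viewed as a decomposition of stalk complexes in $\mathcal D(\Gr \L)$), the inclusion $\mathcal F_{\gr \L}(T) \subseteq \mathcal T$ established in the proof of Theorem \ref{delta} forces $M', M'' \in \mathcal T$; being summands of a finitely generated graded module, $M'$ and $M''$ again lie in $\gr \L$.

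Applying $G_T$ yields $G_T(M) \simeq G_T(M') \oplus G_T(M'')$ with $G_T(M) \in \mathcal L^b(\G)$. Granting for the moment that each summand $G_T(M')$, $G_T(M'')$ is isomorphic to an object of $\mathcal L^b(\G)$, essential surjectivity of the equivalence of Theorem \ref{delta} supplies $M'_0, M''_0 \in \mathcal F_{\gr \L}(T)$ with $G_T(M'_0) \simeq G_T(M')$ and $G_T(M''_0) \simeq G_T(M'')$. Since $M'$ and $M'_0$ both lie in $\mathcal T$, on which $G_T$ is fully faithful, these isomorphisms lift to isomorphisms $M' \simeq M'_0$ and $M'' \simeq M''_0$ in $\mathcal D(\Gr \L)$; as all four objects are stalk complexes, these are isomorphisms of graded modules, and transporting the defining filtrations of $M'_0$, $M''_0$ shows $M', M'' \in \mathcal F_{\gr \L}(T)$, as desired.

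The one substantive point --- the expected main obstacle --- is the closure of $\mathcal L^b(\G)$ under direct summands. Here I would use that a bounded linear complex of graded projectives is a bounded-above complex of projectives, hence $K$-projective, so that morphisms in $\mathcal D(\Gr \G)$ between such complexes coincide with homotopy classes of cochain maps; since homotopies between linear complexes vanish (as noted before Theorem \ref{delta}), the ring $\End_{\mathcal D(\Gr \G)}(G_T(M))$ is exactly the ring of cochain endomorphisms. The idempotent cutting out the summand $G_T(M')$ is therefore an honest idempotent cochain map $e$, and in each cohomological degree $j$ its component $e^j$ is a degree-zero $\G$-linear idempotent of the projective module $G_T(M)^j$. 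Idempotents split in $\gr \G$, so $e^j$ carves out a graded projective summand; because $e^j$ is $\G$-linear and degree-preserving, a summand of a module generated in degree $-j$ is again generated in degree $-j$, so the image complex $\operatorname{im} e$, with its restricted differential, is once more a bounded linear complex of graded projectives. Splitting $e$ in the abelian category of cochain complexes then gives $G_T(M) \simeq \operatorname{im} e \oplus \operatorname{im}(1-e)$ with both summands in $\mathcal L^b(\G)$, identifying $G_T(M') \simeq \operatorname{im} e$ and $G_T(M'') \simeq \operatorname{im}(1-e)$ and completing the argument. The only genuinely delicate verifications are the identification of the derived and cochain endomorphism rings and the stability of the degree-generation condition under taking summands.
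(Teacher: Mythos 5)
Your proposal is correct and follows essentially the same route as the paper, whose entire proof is the one-line observation that idempotents split in $\mathcal L^b(\G)$ and hence in $\mathcal F_{\gr \L}(T)$ via the equivalence of Theorem \ref{delta}. You have simply supplied the details the paper leaves implicit: that homotopies between linear complexes vanish so derived idempotents are honest cochain idempotents, that these split degreewise into graded projective summands still generated in the right degree, and that the decomposition transports back through $G_T$ restricted to $\mathcal T$.
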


\begin{proof}
Since idempotents split in $\mathcal L^b(\G)$, the same must be true in $\mathcal F_{\gr \L}(T)$.
\end{proof}

If $\L=\L_0$, then $\Ext_\L^1(T,T)=0$, and the objects of $\mathcal F_{\gr \L}(T)$ are finite direct sums of graded shifts of direct summands of $T$.

In the classical Koszul case with $\L_0 \cong k^{\times r}$, the category $\mathcal F_{\gr \L}(T)$ consists of all finite-dimensional graded $\L$-modules.

If certain finiteness conditions are satisfied, we get an equivalence of bounded derived categories of finitely generated graded modules, as the following theorem shows. For classical Koszul algebras, this is \cite[Theorem 2.12.6]{Bei}.

\begin{thm}\label{derequiv}
Suppose $\L$ is artinian and $\G$ is noetherian. Assume $\gldim \G < \infty$. Then there is an equivalence of triangulated categories $G_T^b \colon \mathcal D^b(\gr \L) \rightarrow \mathcal D^b(\gr \G)$.
\end{thm}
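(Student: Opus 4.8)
The plan is to deduce the equivalence from the restricted equivalence $G_T \colon \mathcal F_{\gr \L}(T) \to \mathcal L^b(\G)$ of Theorem \ref{delta} by showing that each of these two subcategories generates the relevant bounded derived category as a triangulated category, and then feeding this into the adjunction $(F_T,G_T)$. Write $\langle \mathcal C \rangle$ for the smallest full triangulated subcategory of the ambient derived category that contains $\mathcal C$ and is closed under the graded shifts $\gsh{1}$. The two generation statements I would establish are: (A) $\langle \mathcal F_{\gr \L}(T) \rangle = \mathcal D^b(\gr \L)$, and (B) $\langle \mathcal L^b(\G) \rangle = \mathcal D^b(\gr \G)$. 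Granting these, both the construction of $G_T^b$ and the proof that it is an equivalence fall out formally from the fact that $G_T$ is triangulated and is the right adjoint in the pair $(F_T,G_T)$ with counit $\phi$.

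For (A), I would first use that $\L$ is artinian: every finitely generated graded $\L$-module then has a finite composition series whose factors are graded shifts $S_h \gsh{i}$ of the simple $\L_0$-modules $S_h$ (each graded simple module being concentrated in a single degree, since $\L$ is positively graded). Hence it suffices to place each $S_h$ in $\langle \mathcal F_{\gr \L}(T) \rangle$, after which iterated triangles coming from composition series, from graded shifts, and from the stupid truncation triangles of a bounded complex give all of $\mathcal D^b(\gr \L)$. To reach $S_h$ I would view $\L_0 = \L/\L_{\geq 1}$ as a graded $\L$-module concentrated in degree $0$ and exploit that an exact sequence of $\L_0$-modules concentrated in degree $0$ is also exact in $\gr \L$. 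Since $\gldim \L_0 < \infty$, a finite $\L_0$-projective resolution of $S_h$ places $S_h$, in $\mathcal D^b(\gr \L)$, inside the triangulated subcategory generated by $\L_0$; and since $T$ is a tilting $\L_0$-module, its defining coresolution $0 \to \L_0 \to T_0 \to \cdots \to T_n \to 0$ with $T_i \in \add T$ places $\L_0$ inside $\langle \add T \rangle \subseteq \langle \mathcal F_{\gr \L}(T) \rangle$. Combining the two gives $S_h \in \langle \mathcal F_{\gr \L}(T) \rangle$.

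For (B), I would use that $\G$ is noetherian with $\gldim \G < \infty$, so that $\mathcal D^b(\gr \G)$ is equivalent to the homotopy category of bounded complexes of finitely generated graded projective $\G$-modules, which is generated as a triangulated category by the stalk complexes of such projectives. Any finitely generated graded projective splits into summands each generated in a single degree, and such a summand, placed in the matching cohomological degree, is precisely a one-term linear complex, hence an object of $\mathcal L^b(\G)$. Closing up under graded shifts, cohomological shifts, split triangles, and the truncation triangles of a bounded complex then shows that every bounded complex of finitely generated projectives lies in $\langle \mathcal L^b(\G) \rangle$, which is (B).

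Finally I would assemble the equivalence. The category $\mathcal T = \{ M \mid \phi_M \text{ is an isomorphism} \}$ introduced before Theorem \ref{duamod} is triangulated and closed under graded shifts, and (as in the proof of Theorem \ref{delta}) it contains $\mathcal F_{\gr \L}(T)$; by (A) it therefore contains $\mathcal D^b(\gr \L)$, so $\phi_M$ is an isomorphism for every $M \in \mathcal D^b(\gr \L)$. Because $F_T$ is left adjoint to $G_T$, invertibility of the counit on $\mathcal D^b(\gr \L)$ makes $G_T$ fully faithful there; and since $G_T$ is triangulated with $G_T(\mathcal F_{\gr \L}(T)) = \mathcal L^b(\G) \subseteq \mathcal D^b(\gr \G)$, statement (A) also guarantees that $G_T$ carries $\mathcal D^b(\gr \L)$ into $\mathcal D^b(\gr \G)$, defining $G_T^b$. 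Its essential image is a triangulated subcategory of $\mathcal D^b(\gr \G)$ containing $\mathcal L^b(\G)$, so by (B) it is all of $\mathcal D^b(\gr \G)$; thus $G_T^b$ is essentially surjective as well, hence an equivalence. The main obstacle I expect is step (A): one must check carefully that the artinian hypothesis genuinely reduces the problem to the simple modules, and that the $\L_0$-level resolutions (the projective resolution of $S_h$ and the tilting coresolution of $\L_0$) may legitimately be read inside $\gr \L$ as sequences of degree-$0$ modules, even though the projective $\L_0$-modules occurring in them are not projective over $\L$.
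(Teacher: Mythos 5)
Your proposal is correct and follows essentially the same route as the paper: the paper's (three-sentence) proof consists precisely of asserting your generation statements (A) and (B) — that $\mathcal D^b(\gr \L)$ is generated by graded shifts of direct summands of $T$ and that $\mathcal D^b(\gr \G)$ is generated by the indecomposable graded projectives — and then invoking $T \in \mathcal T$, $G_T(T)=\G$, and Proposition \ref{rhom}(c). You have simply supplied the details (composition series over the artinian $\L$, the $\L_0$-projective resolution of the simples, the tilting coresolution of $\L_0$, and the reduction of $\mathcal D^b(\gr\G)$ to $K^b$ of finitely generated graded projectives) that the paper leaves implicit.
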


\begin{proof}
If $\L$ is artinian, then $\mathcal D^b(\gr \L)$ is equivalent to the smallest triangulated subcategory of $\mathcal D (\Gr \L)$ containing all graded shifts of direct summands of $T$. If $\G$ is noetherian and of finite global dimension, then $\mathcal D^b(\gr \G)$ is equivalent to the smallest triangulated subcategory of $\mathcal D (\Gr \G)$ containing all indecomposable graded projective $\G$-modules. Since $T \in \mathcal T$ and $G_T(T)=\G$, it follows from Proposition \ref{rhom}(c) that there is an equivalence of triangulated categories $G_T^b \colon \mathcal D^b(\gr \L) \rightarrow \mathcal D^b(\gr \G)$.
\end{proof}

\subsection{What if $\L_0$ has infinite global dimension?}\label{inftysec}

So far our assumption has been that $\gldim \L_0 < \infty$, but in this subsection we remark on the situation when $\gldim \L_0=\infty$. In \cite{TKos} and \cite{Ext}, $T$-Koszul algebras were defined without any condition on the global dimension of $\L_0$. If, following \cite{Ext}, we take ``$\phi_{D\L} \colon F_T G_T(D \L) \rightarrow D \L$ is an isomorphism'' to be the defining property of $T$-Koszul algebras, can we find a more elementary definition in the style of Definition \ref{nydef} that is also appropriate in the infinite global dimension case? As stated, the important Theorem \ref{opp} and the possibly even more important Theorem \ref{comp} (both directions) are no longer true if we allow $\gldim \L_0=\infty$. Some of the theory can be recovered by using a crucial insight from \cite{TKos}, namely that one should allow $T$ to be a Wakamatsu tilting $\L_0$-module.

In the special case when $\L=\L_0$, it is known that given a self-orthogonal $\L$-module $T$, the map $\phi_{D\L} \colon F_T G_T(D \L) \rightarrow D \L$ is an isomorphism if and only if $T$ is a Wakamatsu tilting module (see for example \cite[Proposition 8.3]{Ext}). So we define $\L_0$ to be Koszul with respect to $T$ if $T$ is a Wakamatsu tilting $\L_0$-module. Can the definition for more general graded algebras be approached in a similar way? In other words, can we determine how the appropriate graded version of a Wakamatsu tilting module should be defined? If $T$ is a graded self-orthogonal $\L$-module and $\phi_{D\L}$ is an isomorphism, then it follows from the proof of Theorem \ref{comp} that $T$ is a Wakamatsu tilting $\L_0$-module, without any assumption on the global dimension of $\L_0$. In line with Definition \ref{nydef}, it seems natural to impose the following conditions on $T$.
\begin{itemize}
\item[(W1)] $T$ is a Wakamatsu tilting $\L_0$-module.
\item[(W2)] $T$ is a graded self-orthogonal $\L$-module.
\end{itemize}
As natural as these conditions might be, it is our impression that (W1) and (W2) together are not strong enough to imply that $\phi_{D\L}$ is an isomorphism, but we do not have a counterexample.

Our discussion on $T$-Koszul algebras with $\gldim \L_0=\infty$ might be related to some important unresolved homological conjectures. We mention here a possible connection with the Wakamatsu Tilting Conjecture. Let $\L=\L_0 \oplus \L_1$ be a graded algebra and $T$ a Wakamatsu tilting $\L_0$-module of finite projective dimension. Then $\phi_{D\L} \colon F_T G_T(D \L) \rightarrow D \L$ is an isomorphism if and only if $\phi_{(D \L)_{-1}} \colon F_T G_T((D \L)_{-1}) \rightarrow (D \L)_{-1}$ is an isomorphism. The Wakamatsu Tilting Conjecture can be reformulated as stating that for any $\L_0$-module $M \in T^{\perp}$, the map $\phi_M \colon F_T G_T(M) \rightarrow M$ is an isomorphism. All this suggests that the following question could and should be answered in the affirmative, but we do not have a proof.

\begin{ques}
Let $\L_0$ be a finite dimensional $k$-algebra and suppose $T$ is a Wakamatsu tilting $\L_0$-module of finite projective dimension for which the Wakamatsu Tilting Conjecture does not hold. Does there exist a graded algebra $\L=\L_0 \oplus \L_1$ such that (i) $T$ is a graded self-orthogonal $\L$-module, (ii) $(D \L)_{-1} \in T^{\perp}$ as a left $\L_0$-module, and (iii) $\phi_{D\L} \colon F_T G_T(D \L) \rightarrow D \L$ is not an isomorphism?
\end{ques}

\section{Quasi-hereditary Koszul algebras}

The definition of $T$-Koszul algebras is made as unrestrictive as possible. With the almost limitless choice of algebras $\L$ and modules $T$, there is a huge variety of different kinds of algebras satisfying the requirement (see \cite{TKos} for a wide range of examples).

Instead of taking the most general view, in this section we will have very specific examples in mind. Finite dimensional algebras arising in the representation theory of algebraic groups and Lie algebras often enjoy nice homological properties. For instance, such algebras often are Koszul and quasi-hereditary \cite{Icra}. We recall the definition of quasi-hereditary algebras below. Deeper phenomena resembling Koszul duality have been observed between algebras of this sort and extension algebras of so-called standard modules \cite{Stan}. In this section we show how the observed phenomena can be explained as instances of $T$-Koszul duality.

\subsection{Quasi-hereditary algebras with duality}

Let $\L$ be a finite dimensional $k$-algebra. Fix an ordering on a complete set of non-isomorphic simple $\L$-modules $S_1, \ldots, S_r$. For each $0 \leq i \leq r$, define the \emph{standard module} $\Delta_i$ to be the largest quotient of the projective module $P_i$ having no simple composition factors $S_j$ with $j>i$. Dually, define the \emph{costandard module} $\nabla_i$ to be the largest submodule of the injective module $I_i$ having no simple composition factors $S_j$ with $j>i$. Let $$\Delta=\bigoplus_{i=1}^r \Delta_i$$ and $\nabla=\bigoplus_{i=1}^r \nabla_i$.

We say that $\L$ is a \emph{quasi-hereditary algebra} \cite{Hwc} \cite{QuaH} if (i) $\Lambda$ admits a $\Delta$-filtration, i.e., there is a filtration $0=M_0 \subseteq M_1 \subseteq \ldots \subseteq M_t=\L$ where the subfactors $M_j/M_{j-1}$ are standard modules for all $1 \leq j \leq t$, and (ii) $\End_\L (\Delta_i)$ is a division ring for all $1 \leq i \leq r$. All quasi-hereditary algebras have finite global dimension.

If $\L$ is quasi-hereditary, then for all $1 \leq i,j \leq r$ and $n>0$ we have $\Ext^n_\L(\Delta_i,\nabla_j)=0$ \cite[Theorem 1.8]{Icra}. Furthermore $\Hom_\L(\Delta_i,\nabla_j)=0$ whenever $i \neq j$, and $\Hom_\L(\Delta_i,\nabla_i) \simeq \End(S_i)$.

A quasi-hereditary structure on an algebra determines a unique \emph{characteristic tilting module} $\mathfrak T_c$, a basic tilting-cotilting module that admits both a $\Delta$-filtration and a $\nabla$-filtration \cite{Tilt}. For each $1 \leq i \leq r$, the module $\mathfrak T_c$ has an indecomposable direct summand $T_i$ which admits morphisms $\Delta_i \hookrightarrow T_i \twoheadrightarrow \nabla_i$ with non-zero composition. These are the only indecomposable direct summands of $\mathfrak T_c$, so $\mathfrak T_c \simeq \bigoplus_{i=1}^r T_i$. The module $\Delta$ has a \emph{tilting coresolution} $$0 \to \Delta \to T^0 \to T^1 \to \ldots \to T^n \to 0$$ with $T^i \in \add \mathfrak T_c$ for all $0 \leq i \leq n$ \cite{Tilt}. Dually, the module $\nabla$ has a \emph{tilting resolution} $0 \to T^m \to \ldots \to T^1 \to T^0 \to \nabla \to 0$ with $T^i \in \add \mathfrak T_c$ for all $0 \leq i \leq m$.

In this section we only consider \emph{quasi-hereditary algebras with duality}, that is, we suppose our quasi-hereditary algebras are equipped with a duality functor $(-)^\circ \colon \mod \L \to \mod \L$ such that ${S_i}^\circ \simeq S_i$ for all $1 \leq i \leq r$. The direct summands of the characteristic tilting module have the property that ${T_i}^\circ \simeq T_i$ for all $1 \leq i \leq r$. Quasi-hereditary algebras with duality are called \emph{BGG algebras} in \cite{Bgg}. For such algebras the BGG reciprocity principle holds \cite{Bgg}; we have $$(P_i \colon \Delta_j)=[\Delta_j \colon S_i]$$ for all $1 \leq i ,j \leq r$, where $(P_i \colon \Delta_j)$ denotes the filtration multiplicity of $\Delta_j$ in a $\Delta$-filtration of $P_i$ and $[\Delta_j \colon S_i]$ denotes the multiplicity of $S_i$ in a composition series for $\Delta_j$.

Our interest in this section lies in graded finite dimensional algebras $\L=\bigoplus_{i=1}^t \L_i$ that are simultaneously quasi-hereditary and (classically) Koszul. It turns out that such algebras are in many cases Koszul with respect to $\Delta$. We begin with an example.

\begin{example}\label{schur}
Let $\L$ be the path algebra  $\L=kQ/I$, where $Q$ is the quiver
$$\xymatrix{1 \ar@/^/[r]^\alpha & 2 \ar@/^/[l]^{\alpha^\circ} \ar@/^/[r]^\beta & 3 \ar@/^/[l]^{\beta^\circ} \ar@/^/[r]^\gamma & 4 \ar@/^/[l]^{\gamma \circ} \ar@/^/[r]^\delta & 5 \ar@/^/[l]^{\delta^\circ}}$$
and $I=\langle \rho \rangle$ is the ideal generated by the set of relations $$\langle \rho \rangle= \{\beta \alpha, \gamma \beta, \delta \gamma, \beta^\circ \beta-\alpha \alpha^\circ, \gamma^\circ \gamma-\beta \beta^\circ, \delta^\circ \delta-\gamma \gamma^\circ, \delta \delta^\circ, \alpha^\circ \beta^\circ, \beta^\circ \gamma^\circ, \gamma^\circ \delta^\circ \}.$$ This is a quasi-hereditary algebra with duality. It is (Morita equivalent to) a block of a Schur algebra, and it is denoted by $\L_{5,0}$ in \cite{Rar} and by $\mathcal A_5$ in \cite{Schur}. The indecomposable standard modules are $\Delta_1=S_1$,
$$\begin{array}{ccccccccccc}
\Delta_2 \colon & \xymatrix@!=2pt{& \mathtt{S_2} \ar@{-}[dl]\\
\mathtt{S_1}}, && \Delta_3 \colon & \xymatrix@!=2pt{& \mathtt{S_3}
\ar@{-}[dl]\\
\mathtt{S_2}}, && \Delta_4 \colon & \xymatrix@!=2pt{& \mathtt{S_4}
\ar@{-}[dl]\\
\mathtt{S_3}}, && \Delta_5 \colon & \xymatrix@!=2pt{& \mathtt{S_5}
\ar@{-}[dl]\\
\mathtt{S_4}}. \end{array}$$
The characteristic tilting module $\mathfrak T_c=T_1 \oplus T_2 \oplus T_3 \oplus T_4 \oplus T_5$ has direct summands $T_1=S_1$,
$$\begin{array}{ccccccccccc}
T_2 \colon & \xymatrix@!=2pt{\mathtt{S_1} \ar@{-}[dr] &\\ & \mathtt{S_2}
\ar@{-}[dl]\\\mathtt{S_1}}, &
T_3 \colon & \xymatrix@!=2pt{& \mathtt{S_2} \ar@{-}[dl] \ar@{-}[dr] &
\\\mathtt{S_1} \ar@{-}[dr]&& \mathtt{S_3} \ar@{-}[dl]\\& \mathtt{S_2}&}, &
T_4 \colon & \xymatrix@!=2pt{& \mathtt{S_3} \ar@{-}[dl] \ar@{-}[dr] &
\\\mathtt{S_2} \ar@{-}[dr]&& \mathtt{S_4} \ar@{-}[dl]\\& \mathtt{S_3}&}, &
T_5 \colon & \xymatrix@!=2pt{& \mathtt{S_4} \ar@{-}[dl] \ar@{-}[dr] &
\\\mathtt{S_3} \ar@{-}[dr]&& \mathtt{S_5} \ar@{-}[dl]\\& \mathtt{S_4}&}.
\end{array}$$

The algebra is $T$-Koszul in three different important ways using three different sets of orthogonal modules: namely, the indecomposable summands of the characteristic tilting module, the simple modules, and the standard modules. The grading imposed on $\L$ is different in each case.

If all arrows are assigned degree $0$, then $\L=\L_0$ and $\L$ is Koszul with respect to $\mathfrak T_c$. The Koszul dual algebra of $(\L,\mathfrak T_c)$ is the \emph{Ringel dual} quasi-hereditary algebra $[\End_\L(\mathfrak T_c)]^{\op}$.
In this example $\L$ is \emph{Ringel self-dual}, that is, there is an isomorphism $[\End_\L(\mathfrak T_c)]^{\op} \cong \L$.

If all arrows are assigned degree $1$, then $\L_0 \cong k^{\times r}$ and $\L$ is a Koszul algebra in the classical sense. The Koszul dual algebra $[\bigoplus_{i\geq 0} \Ext_\L^i (\L_0,\L_0)]^{\op}$ is isomorphic to $kQ/I'$, where $I'=\langle \rho ' \rangle$ is the ideal generated by the set of relations $$\langle \rho ' \rangle= \{\beta^\circ \beta-\alpha \alpha^\circ, \gamma^\circ \gamma-\beta \beta^\circ, \delta^\circ \delta-\gamma \gamma^\circ, \delta \delta^\circ \}.$$

Finally, we want to show that $\L$ is Koszul with respect to $\Delta$. For this we need to do a little trick with the grading. Let $\deg \alpha = \deg \beta = \deg \gamma = \deg \delta = 1$ and $\deg \alpha^\circ = \deg \beta^\circ = \deg \gamma^\circ = \deg \delta^\circ = 0$. Then $\L$ is Koszul with respect to $\L_0 \simeq \Delta=\Delta_1 \oplus \Delta_2 \oplus \Delta_3 \oplus \Delta_4 \oplus \Delta_5$, since $\Ext^i_{\Gr \L}(\Delta,\Delta \gsh j) \neq 0$ implies $i=j$. The Koszul dual algebra $\G=[\bigoplus_{i\geq 0} \Ext_\L^i (\Delta,\Delta)]^{\op}$ is isomorphic to the graded algebra $k \check Q/\check I$, where $\check Q$ is the quiver $$\xymatrix{1 & 2 \ar@/_/[l]_{\check \alpha} \ar@/^/[l]^{\alpha^\circ} & 3 \ar@/_/[l]_{\check \beta} \ar@/^/[l]^{\beta^\circ} & 4 \ar@/_/[l]_{\check \gamma} \ar@/^/[l]^{\gamma \circ} & 5, \ar@/_/[l]_{\check \delta} \ar@/^/[l]^{\delta^\circ}}$$ the grading is given by $\deg \check \alpha = \deg \check \beta = \deg \check \gamma = \deg \check \delta = 1$ and $\deg \alpha^\circ = \deg \beta^\circ = \deg \gamma^\circ = \deg \delta^\circ = 0$, and $\check I=\langle \check \rho \rangle$ is the ideal generated by the set of relations $$\langle \check \rho \rangle= \{\alpha^\circ \check \beta - \check \alpha \beta^\circ, \beta^\circ \check \gamma - \check \beta \gamma^\circ, \gamma^\circ \check \delta - \check \gamma \delta^\circ, \alpha^\circ \beta^\circ, \beta^\circ \gamma^\circ, \gamma^\circ \delta^\circ \}.$$ From Theorem \ref{kosdua} we know there is an isomorphism of graded algebras $$\L \cong [\bigoplus_{i\geq 0} \Ext_\G^i (D \Delta,D\Delta)]^{\op}.$$ According to Theorem \ref{derequiv} there is an equivalence of triangulated categories $$\mathcal D^b(\gr \L) \to \mathcal D^b(\gr \G).$$ It restricts to the equivalence $$\mathcal F_{\gr \L}(\Delta) \to \mathcal L^b(\G)$$ from Theorem \ref{delta}.
\end{example}

\subsection{Height functions}

The algebra in Example \ref{schur} was Koszul with respect to $\Delta$, but not all graded quasi-hereditary algebras with duality are Koszul with respect to $\Delta$. Sufficient conditions can be given in terms of \emph{height functions}. A height function is a function $h\colon \{1,\ldots, r\} \rightarrow \mathbb N_{\geq 0}$.

Let $\Lambda=\bigoplus_{i=0}^t \L_{(i)}$ be a graded quasi-hereditary algebra with duality. We put degree indices in parentheses to distinguish the grading from the $\Delta$-grading below. Assume $\L_{(0)} \cong k^{\times r}$. Assume further that $\L$ is generated in degree $1$. For all $1 \leq j \leq r$, the modules $\Delta_j$, $\nabla_j$ and $T_j$ have graded lifts which preserve the morphisms $\Delta_j \hookrightarrow T_j \twoheadrightarrow \nabla_j$, see for example \cite{Stan}. The functor $(-)^{\circ}$ lifts to a duality $(-)^{\circ} \colon \gr (\L) \to \gr (\L)$ with $S_j \gsh{\gsh l}^\circ \simeq S_j \gsh{\gsh {-l}}$ for all $l \in \Z$, where $\gsh{\gsh l}$ denotes the $l$th graded shift for this grading.

Fix a height function $h$. We consider the following conditions on $h$.
\begin{itemize}
\item[(H1)] $[(\Delta_j)_{(l)} \colon S_i]=0$ whenever $h(i) \neq h(j)-l$.
\item[(H2)] For each $1 \leq j \leq r$, the minimal graded tilting coresolution of $\Delta_j$, $$0 \to \Delta_j \rightarrow T^{0,j} \rightarrow T^{1,j} \to \ldots \to T^{n,j} \to 0,$$ has the property that for all $0 \leq u \leq n$, the indecomposable direct summands of $T^{u,j}$ are of the form $T_i \gsh {\gsh {-u}}$ with $h(i)=h(j)-u$.
\end{itemize}

Condition (H2) is the same as condition (I) in \cite{Stan}. Via the duality $(-)^{\circ}$ it is also equivalent to condition (II) in that paper.

In Example \ref{schur}, the height function $h(i)=i$ for all $1 \leq i \leq 5$ satisfies conditions (H1) and (H2).

We have not assumed that $\L$ is Koszul, but this turns out to be a consequence of (H2).

\begin{prop}
If (H2) holds, then $\L$ is Koszul (in the classical sense).
\end{prop}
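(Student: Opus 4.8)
The plan is to show that $\L$, with the $\gsh{\gsh \cdot}$-grading, is classically Koszul by proving that every simple $S_j$ admits a linear minimal graded projective resolution; since $\L_{(0)} \cong k^{\times r}$ is semisimple and $\L$ is generated in degree $1$, this is precisely the condition $\Ext^i_{\Gr \L}(S_a, S_b \gsh{\gsh j}) = 0$ for $i \neq j$. Because $\L$ carries the graded duality $(-)^\circ$ with $S_j \gsh{\gsh l}^\circ \simeq S_j \gsh{\gsh{-l}}$, a linear projective resolution of each $S_j$ dualizes to a linear injective coresolution of each $S_j$, and likewise any statement about the $\Delta_j$ transports to the dual statement about the $\nabla_j$. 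Thus it is enough to control one of the two sides.

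First I would reduce Koszulity to the single claim that every standard module $\Delta_j$ has a linear projective resolution. Granting this, $(-)^\circ$ supplies linear injective coresolutions of all $\nabla_j$, and one obtains the vanishing of $\Ext^i_{\Gr \L}(S_a, S_b \gsh{\gsh j})$ off the diagonal by the usual mechanism for graded quasi-hereditary algebras: combining the linear resolutions of the $\Delta$'s and $\nabla$'s with the fundamental vanishing $\Ext^{>0}_\L(\Delta_i, \nabla_k) = 0$ from \cite{Icra} and the concentration of $\Hom_\L(\Delta_i, \nabla_k)$ in a single internal degree forces the relevant Ext groups to concentrate in cohomological degree equal to internal degree. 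So the whole proposition rests on converting condition (H2) into linear projective resolutions of the standards.

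The heart of the argument, and the step I expect to be the main obstacle, is exactly this deduction. The linear tilting coresolution $0 \to \Delta_j \to T^{0,j} \to \cdots \to T^{n,j} \to 0$ exhibits $\Delta_j$ in $\mathcal D(\Gr \L)$ as the total object of the complex $[T^{0,j} \to \cdots \to T^{n,j}]$; splicing a linear projective resolution of each summand $T_i \gsh{\gsh{-u}}$ of $T^{u,j}$ into the associated total complex produces a projective resolution of $\Delta_j$, and the normalization $h(i) = h(j) - u$ imposed by (H2) is precisely what aligns the internal degrees so that the $p$th term is generated in a single degree. Note that a linear complex of graded projectives that is a resolution is automatically minimal (no contractible summand can link consecutive terms generated in different degrees), so producing any linear resolution suffices.

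The genuinely delicate input is therefore that the indecomposable tilting modules $T_i$ themselves resolve linearly. Here the same-height summand $T^{0,j} = T_j$ is coupled to $\Delta_j$ through the short exact sequence $0 \to \Delta_j \to T_j \to C_j \to 0$, in which $C_j$ is filtered by standards of strictly smaller height (and so resolves linearly by induction), whence $\Delta_j$ and $T_j$ have the same resolution-linearity status. I would break this coupling by induction on the height function, playing the projective side against the injective side: the dual $\nabla$-flag $0 \to D_j \to T_j \to \nabla_j \to 0$ together with the self-duality $T_j{}^\circ \simeq T_j$ relates a linear injective coresolution of $T_j$ to a linear projective one, with the induction anchored at the extremal heights where the standards are projective. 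Checking that this bootstrap actually closes, rather than merely relating $\Delta_j$ and $T_j$ to each other at each fixed height, is the crux where the precise numerical shape of (H2) must be used in full.
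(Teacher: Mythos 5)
The paper's own proof is a two-line citation: condition (H2), together with its image under the duality $(-)^\circ$, says precisely that $\L$ is \emph{balanced} in the sense of \cite{Maz}, and balanced quasi-hereditary algebras are Koszul by \cite[Corollary 6]{Maz}. What you propose is a from-scratch reproof of that result, and your two steps correspond exactly to the two nontrivial ingredients of the cited literature: your first reduction (linear projective resolutions of all $\Delta_j$ plus linear injective coresolutions of all $\nabla_j$ imply Koszulity of $\L$) is the theorem of \'Agoston, Dlab and Luk\'acs that standard Koszul graded quasi-hereditary algebras are Koszul, which you invoke as ``the usual mechanism'' without proof; your second step (linear tilting coresolutions of the standards imply linear projective resolutions of the standards) is the substance of Mazorchuk's proof that balanced implies standard Koszul. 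Neither is a formality.

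The genuine gap is the one you flag yourself, and it does not close as described. Your splicing argument requires every indecomposable tilting module $T_i$ to admit a linear minimal graded projective resolution (up to shift), and this is both unproved and delicate: in the short exact sequence $0 \to \Delta_j \to T_j \to C_j \to 0$ the quotient $C_j$ is filtered by shifted standards $\Delta_s\gsh{\gsh{-l}}$ with $l \geq 1$ (as in Example \ref{schur}, where $T_2/\Delta_2 \simeq \Delta_1\gsh{\gsh{-1}}$), so a horseshoe resolution of $T_j$ assembled from linear resolutions of $\Delta_j$ (generated in internal degree $0$) and of the $\Delta_s\gsh{\gsh{-l}}$ (generated in internal degree $-l$) has terms generated in several internal degrees at once; linearity for $T_j$ therefore does not follow formally from linearity for the standards but needs cancellations that must be extracted from (H2). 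Moreover the proposed anchor of your height induction is wrong: the standard modules that are projective are those with $j$ maximal in the quasi-hereditary order, not those of extremal height --- in Example \ref{schur} the module $\Delta_1=S_1$ has minimal height and is not projective --- so the induction has no base at one end. Unless you supply the missing argument that this bootstrap closes (which is essentially the content of \cite{Maz}), the proof is incomplete; the efficient route is simply to observe that (H2) and its dual under $(-)^\circ$ make $\L$ balanced and to quote \cite[Corollary 6]{Maz}.
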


\begin{proof}
From (H2) and the dual statement under $(-)^\circ$ it follows that the algebra $\L$ is \emph{balanced} \cite{Maz}. Balanced quasi-hereditary algebras are Koszul \cite[Corollary 6]{Maz}.
\end{proof}

The existence of a height function satisfying (H1) and (H2) is more important than its actual values. The following lemma explains how values are related within the same block of the algebra.

\begin{lem}\label{h1}
If (H1) holds, then $e_i \L_{(1)} e_j \neq 0$ implies $|h(i) - h(j)|=1$. More precisely, suppose (H1) holds and $e_i \L_{(1)} e_j \neq 0$. Then
\begin{itemize}
\item[(a)] $h(i) - h(j)=1$ if and only if $j<i$.
\item[(b)] $h(j) - h(i)=1$ if and only if $i<j$.
\end{itemize}
\end{lem}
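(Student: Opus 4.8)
The plan is to analyze what the arrow $e_i\L_{(1)}e_j \neq 0$ tells us about composition factors of standard modules, and to feed this into condition (H1). The key observation is that $e_i \L_{(1)} e_j \neq 0$ means there is an arrow from $j$ to $i$ in the quiver (using the convention that $e_i \L e_j$ corresponds to maps $P_i \to P_j$, equivalently paths from $j$ to $i$), and since $\L$ is quasi-hereditary with $\L_{(0)} \cong k^{\times r}$, each projective $P_i$ has top $S_i$ and its degree-one part $\operatorname{rad} P_i / \operatorname{rad}^2 P_i$ records exactly these arrows. I would first translate the arrow condition into a statement about $S_i$ appearing as a degree-shifted composition factor of some standard module, then use (H1) to extract the constraint $|h(i)-h(j)|=1$, and finally use the ordering-sensitive definition of standard modules to pin down the sign.

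\emph{Step 1: arrows and standard modules.} For a quasi-hereditary algebra, $\Delta_j$ is the largest quotient of $P_j$ with no composition factor $S_k$ for $k>j$, so its top is $S_j$ and $(\Delta_j)_{(0)} = S_j$. The arrow $e_i \L_{(1)} e_j \neq 0$ gives a nonzero map contributing to degree one of $P_j$; I would argue that either this arrow survives in $\Delta_j$ (contributing $S_i$ in degree one of $\Delta_j$, which forces $i<j$ since $\Delta_j$ has no factors above $j$), or it survives in $\Delta_i$ by the dual consideration via $(-)^\circ$. Concretely, because the algebra has a duality fixing the simples, an arrow between vertices $i$ and $j$ comes in a dual pair, and exactly one direction lands inside a standard module as a degree-one radical factor. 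The case $j<i$ puts $S_i$ into $(\Delta_j)_{(1)}$ is impossible (that would be a factor above $j$); instead when $j<i$ the arrow contributes $S_j$ to $(\Delta_i)_{(1)}$, so $[(\Delta_i)_{(1)} : S_j] \neq 0$.

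\emph{Step 2: applying (H1).} Suppose $j<i$. Then $[(\Delta_i)_{(1)} : S_j] \neq 0$, and (H1) with the roles $j \leftrightarrow i$ and $l=1$ forces $h(j) = h(i) - 1$, i.e. $h(i)-h(j)=1$. Conversely, if $i<j$, the dual argument via $(-)^\circ$ (which sends the arrow to its paired arrow and reverses the inequality) gives $[(\Delta_j)_{(1)} : S_i] \neq 0$, and (H1) yields $h(i) = h(j)-1$, i.e. $h(j)-h(i)=1$. Since $i \neq j$ (a degree-one arrow cannot be a loop here, as $\L_{(0)}$ is semisimple), exactly one of $i<j$, $j<i$ holds, giving both the biconditionals in (a) and (b) and in particular $|h(i)-h(j)|=1$.

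\emph{Main obstacle.} The delicate point I expect to be hardest is Step 1: justifying rigorously that a degree-one arrow $e_i\L_{(1)}e_j$ always manifests as a degree-one radical composition factor of \emph{one} of the two standard modules $\Delta_i, \Delta_j$, with the direction dictated by the ordering. This requires care about the definition of $\Delta_j$ as the maximal quotient killing high factors: I must verify that the arrow is not truncated away when passing from $P_j$ to $\Delta_j$ (when $i<j$) and correctly invoke the duality $(-)^\circ$, under which $S_j^\circ \simeq S_j$, to transfer the other case. Once this structural fact is secured, the homological content is entirely carried by (H1), and the sign bookkeeping is routine.
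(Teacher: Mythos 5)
Your argument is, in substance, the paper's own proof: when $i<j$ the truncation $P_j\twoheadrightarrow\Delta_j$ removes exactly the composition factors $S_k$ with $k>j$, so in degree one $(\Delta_j)_{(1)}=\bigoplus_{k\le j}e_k\L_{(1)}e_j$ and the arrow gives $[(\Delta_j)_{(1)}\colon S_i]\neq 0$; then (H1) with $l=1$ forces $h(i)=h(j)-1$; and the case $j<i$ is handled by passing to the dual arrow $e_j\L_{(1)}e_i\neq 0$ via $(-)^\circ$ and repeating. This is precisely the paper's (three-line) proof, so the ``main obstacle'' you flag in Step 1 is not actually an obstacle --- the survival of the arrow in the standard module of the larger index is immediate from the description of $\ker(P_j\twoheadrightarrow\Delta_j)$ as the trace of $\bigoplus_{k>j}P_k$.

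The one genuine error is your reason for excluding $i=j$. Semisimplicity of $\L_{(0)}$ rules out arrows in degree zero but says nothing about a degree-one loop: $e_i\L_{(1)}e_i\neq 0$ is perfectly compatible with $\L_{(0)}\cong k^{\times r}$ (e.g.\ $k[x]/(x^2)$ with $x$ in degree one has semisimple degree-zero part and a loop). The paper instead invokes the quasi-hereditary axiom that $\End_\L(\Delta_i)$ is a division ring: a degree-one loop would survive the truncation $P_i\twoheadrightarrow\Delta_i$, since only factors $S_k$ with $k>i$ are removed, forcing $[\Delta_i\colon S_i]\geq 2$ and hence a nonzero non-invertible endomorphism of $\Delta_i$. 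Replace your parenthetical justification with this and the proof is complete.
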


\begin{proof}
Suppose (H1) holds and $e_i \L_1 e_j \neq 0$. If $i<j$, then $[(\Delta_j)_{(1)} \colon S_i] \neq 0$ and $h(i)=h(j)-1$. If $i>j$, by using the duality $(-)^{\circ}$ we get that $e_j \L_1 e_i \neq 0$, and by repeating the argument we get $h(j)=h(i)-1$. We cannot have $i=j$, since $\End_\L(\Delta_i)$ is a division ring.
\end{proof}

In order to obtain Koszulity with respect to $\Delta$, we follow the same strategy as in Example \ref{schur}, and regrade our algebra $\L$. Given a height function $h$ satisfying (H1), we define what we call the \emph{$\Delta$-grading} on $\L$ in the following way. We assign degrees to the spaces $e_i \L_{(l)} e_j$, $l \geq 0$, $1 \leq i,j \leq r$ whenever $e_i \L_{(l)} e_j \neq 0$. The idempotents should be in degree $0$, so we put $\deg_{\Delta}(e_i \L_{(0)} e_i)=0$ for all $1 \leq i \leq r$. Obviously, we have $e_i \L_{(0)} e_j=0$ whenever $i \neq j$. By assumption the algebra $\L$ is generated by $\L_{(1)}$. With Lemma \ref{h1} in mind, we define the grading on generators by
$$\deg_{\Delta}(e_i \L_{(1)} e_j)=\left \{
\begin{array}{c c}
1 & \text { if } h(i)-h(j)=1\\
0 & \text { if } h(j)-h(i)=1
\end{array}
\right .$$
for all $1 \leq i,j \leq r$.
This can most conveniently be expressed as $$\deg_{\Delta}(e_i \L_{(1)} e_j)=\frac {1+h(i)-h(j)} 2.$$
Hence, for all $l \geq 0$, $1 \leq i,j \leq r$ such that $e_i \L_{(l)} e_j \neq 0$, we get the formula $$\deg_{\Delta}(e_i \L_{(l)} e_j)=\frac {l+h(i)-h(j)} 2 \geq 0.$$ The dependence of the $\Delta$-grading on the height function is only apparent, as the next lemma shows.

\begin{lem}
As long as (H1) holds, the $\Delta$-grading does not depend on the particular choice of height function $h$.
\end{lem}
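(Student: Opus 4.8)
The plan is to reduce the claim to a statement purely about differences of height values on the support of $\L$. Since the $\Delta$-degree assigned to a nonzero space $e_i \L_{(l)} e_j$ is $\frac{l + h(i) - h(j)}{2}$, two height functions $h$ and $h'$ (both satisfying (H1)) induce the same $\Delta$-grading if and only if
\[
h(i) - h(j) = h'(i) - h'(j) \quad \text{whenever } e_i \L_{(l)} e_j \neq 0 \text{ for some } l \geq 0.
\]
So the whole task is to show that, on pairs $(i,j)$ connected by the algebra, the difference $h(i)-h(j)$ does not see the choice of $h$.

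First I would settle the case of generators, i.e.\ $l=1$. By Lemma \ref{h1}, if $e_i \L_{(1)} e_j \neq 0$ then $h(i)-h(j)$ equals $+1$ when $j<i$ and $-1$ when $i<j$, while $i=j$ cannot occur. This value is dictated by the fixed ordering $<$ alone and is therefore the same for every height function obeying (H1); in particular $h(i)-h(j)=h'(i)-h'(j)$ for every arrow. Next I would propagate to arbitrary $l$ using that $\L$ is generated in degree $1$, so $e_i \L_{(l)} e_j = e_i (\L_{(1)})^l e_j$. For $l=0$ the hypothesis $\L_{(0)}\cong k^{\times r}$ forces $i=j$ and the difference is $0$. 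For $l\geq 1$, if the whole space $e_i \L_{(l)} e_j$ is nonzero then at least one summand in the expansion over intermediate idempotents is nonzero, which yields vertices $i=v_0,v_1,\dots,v_l=j$ with $e_{v_{k-1}} \L_{(1)} e_{v_k}\neq 0$ for each $k$. Telescoping gives
\[
h(i)-h(j)=\sum_{k=1}^{l}\bigl(h(v_{k-1})-h(v_k)\bigr),
\]
and applying the arrow case term by term shows that each summand coincides with $h'(v_{k-1})-h'(v_k)$; summing, $h(i)-h(j)=h'(i)-h'(j)$, as required.

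The argument is essentially a connectivity-and-telescoping computation, so I do not expect a serious obstacle. The one point that deserves care is the reduction to genuine arrow paths: one must verify that nonvanishing of the full space $e_i \L_{(l)} e_j$ really does guarantee a choice of intermediate idempotents along which every one-step space $e_{v_{k-1}} \L_{(1)} e_{v_k}$ is nonzero, rather than merely a cancellation of products. This is exactly where the hypothesis that $\L$ is generated in degree $1$ is used, and once it is invoked the telescoping is forced since $h(i)-h(j)$ is path-independent by construction.
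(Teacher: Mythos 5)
Your proof is correct and uses essentially the same argument as the paper: Lemma \ref{h1} pins down the one-step differences $h(i)-h(j)=\pm 1$ from the ordering alone, and these are then propagated along the quiver. The paper phrases the propagation as ``$h-h'$ is constant on each indecomposable block, by repeated use of Lemma \ref{h1},'' while you telescope along a chain of nonzero degree-one spaces extracted from $e_i\L_{(l)}e_j\neq 0$ via generation in degree $1$; this is the same computation, just organized around pairs in the support rather than around connectivity of the block.
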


\begin{proof}
For simplicity assume that $\L$ is indecomposable as an algebra. Let $h$ and $h'$ be two height functions satisfying (H1), and let $d=h(1)-h'(1)$. By repeated use of Lemma \ref{h1}, we get that $d=h(i)-h'(i)$ for all $1\leq i \leq r$. Then $\frac {l+h(i)-h(j)} 2= \frac {l+h'(i)-h'(j)} 2$ for all $l \geq 0$, $1 \leq i,j \leq r$. The general case follows.
\end{proof}

The degree zero part of $\L$ under the $\Delta$-grading is described in the following proposition.

\begin{prop}\label{null}
If (H1) holds and $\L$ is given the $\Delta$-grading, then $\L_0 \simeq \Delta$ as graded $\L$-modules.
\end{prop}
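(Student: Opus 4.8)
The plan is to prove the statement one indecomposable summand at a time. As a left $\L$-module the degree-zero part splits as $\L_0=\bigoplus_{j=1}^r \L_0 e_j$, and since the positive $\Delta$-degree part $\L_{\geq 1}$ is a two-sided ideal with $\L_0=\L/\L_{\geq 1}$, each summand is $\L_0 e_j=P_j/(P_j)_{\geq 1}$, the cokernel of the inclusion of the positive $\Delta$-degree part of the projective $P_j$ (using that $\L_{\geq 1}e_j=(P_j)_{\geq 1}$). Writing $K_j=\ker(P_j \twoheadrightarrow \Delta_j)$, it therefore suffices to prove $K_j=(P_j)_{\geq 1}$ for every $j$; summing over $j$ then gives $\L_0 \simeq \bigoplus_j \Delta_j=\Delta$, and since both modules are concentrated in $\Delta$-degree $0$ the isomorphism is automatically one of graded modules.

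The combinatorial heart of the argument is a description of the $\Delta$-degree in terms of paths. Because $\L$ is generated in degree $1$, any homogeneous element of $e_i \L_{(l)} e_j$ is a sum of products $\lambda_l \cdots \lambda_1$ with $\lambda_s \in e_{a_s}\L_{(1)}e_{a_{s-1}}$, $a_0=j$, $a_l=i$. By the defining formula the $\Delta$-degree of such a product is $\sum_s \tfrac{1+h(a_s)-h(a_{s-1})}{2}$, and by Lemma \ref{h1} each term equals $1$ when $a_{s-1}<a_s$ and $0$ when $a_{s-1}>a_s$ (the case $a_{s-1}=a_s$ being excluded, as $\End_\L(\Delta_i)$ is a division ring). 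Thus the $\Delta$-degree counts the ascents of the index sequence $a_0,\ldots,a_l$, so a product of $\Delta$-degree $0$ uses only descents and $e_i \L_0 e_j \neq 0$ forces $i \leq j$. Hence $(P_j)_0=P_j/(P_j)_{\geq 1}$ has composition factors only among the $S_i$ with $i \leq j$, and by the maximality in the definition of $\Delta_j$ this quotient factors through $\Delta_j$; that is, $K_j \subseteq (P_j)_{\geq 1}$.

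For the reverse inclusion I would invoke (H1). A composition factor $S_i$ of the graded lift of $\Delta_j$ occurs in original degree $l$ only when $h(i)=h(j)-l$, so it sits in $\Delta$-degree $\tfrac{l+h(i)-h(j)}{2}=0$; hence $\Delta_j$ is concentrated in $\Delta$-degree $0$. Since the canonical surjection $P_j \to \Delta_j$ is homogeneous for the original grading and the $\Delta$-grading is a function of the original degree together with the source and target idempotents, the surjection is also homogeneous for the $\Delta$-grading, and it therefore annihilates $(P_j)_{\geq 1}$. This gives $(P_j)_{\geq 1}\subseteq K_j$, and combining the two inclusions yields $K_j=(P_j)_{\geq 1}$ and hence $\L_0 e_j \simeq \Delta_j$.

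I expect the main obstacle to be the ascent count of the second paragraph: translating Lemma \ref{h1}, which controls only single degree-one generators, into a statement about the composition factors of the degree-zero part of $P_j$ along arbitrarily long products, and checking that descent-only products really do terminate at indices $i\le j$ so that the maximality property defining $\Delta_j$ can be applied. The (H1) half is comparatively routine once the correct $\Delta$-grading bookkeeping on the surjection $P_j \to \Delta_j$ is in place.
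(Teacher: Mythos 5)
Your proof is correct and follows essentially the same route as the paper: both reduce to showing that the kernel of $P_j \twoheadrightarrow \Delta_j$ equals $(\L_{\geq 1})e_j$, use (H1) to see that positive-$\Delta$-degree elements die in $\Delta_j$, and use Lemma \ref{h1} together with an induction over products of degree-one generators (your ascent count) to see that $\Delta$-degree zero forces $i \leq j$, whence the maximality of $\Delta_j$ gives the other inclusion. The "main obstacle" you flag is exactly the short induction the paper carries out, and your ascent-counting formulation already contains it.
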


\begin{proof}
It is sufficient to prove that $\L_0 e_j \simeq \Delta_j$ for any given $1 \leq j \leq r$. Both $\L_0 e_j$ and $\Delta_j$ are quotients of $P_j=\L e_j$. Let $K_j$ denote the kernel of $P_j \twoheadrightarrow \Delta_j$. Consider an element $e_i \lambda_l e_j \in (\L_{\geq 1}) e_j$ and suppose $\lambda_l \in \L_{(l)}$. Then $h(i)>h(j)-l$, so $e_i \lambda_l e_j \in K_j$ by (H1). So $\Delta_j$ is a quotient of $\L_0 e_j$.

Let $e_i \lambda_l e_j \in e_i \L_{(l)} e_j$ with $l \geq 1$. If $l=1$ and $e_i \lambda_1 e_j \neq 0$, then $\deg_{\Delta}(e_i \lambda_1 e_j)=0$ implies $i<j$. Since the generators of $\L$ are in $\L_{(1)}$, by induction on $l \geq 1$ it follows that $\deg_{\Delta}(e_i \lambda_l e_j)=0$ implies $i<j$. The module $K_j$ is generated by elements of the form $e_i \lambda_l e_j$ with $\lambda_l \in \L_{(l)}$, $l \geq 1$ and $i>j$. Whenever $i>j$, we have $e_i \lambda_l e_j \in (\L_{\geq 1}) e_j$. Therefore $K_j=(\L_{\geq 1}) e_j$ and $\L_0 e_j \simeq \Delta_j$.
\end{proof}

As a consequence we have established that $\Delta$ is a tilting $\L_0$-module, which is one of the conditions for $\L$ being Koszul with respect to $\Delta$. The remaining crucial step is the vanishing result in the following proposition. When (H1) holds, the minimal graded tilting coresolution of $\Delta$ in the ordinary grading,
$$0 \to \Delta \to T^0 \to T^1 \to \ldots \to T^n \to 0,$$
can be regraded according to the rule $$\deg_\Delta (e_iT^{u,j}_{(l)})=\frac{l+h(i)-h(j)} 2,$$ and we obtain the minimal graded tilting coresolution of $\L_0 \simeq \Delta$ in the $\Delta$-grading.

\begin{prop}\label{vanish}
If (H1) and (H2) hold and $\L$ is given the $\Delta$-grading, then $\Hom_{\Gr \L} (\Delta, T^u \gsh v)=0$ whenever $u \neq v$.
\end{prop}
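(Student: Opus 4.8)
The plan is to reduce the statement to a single indecomposable tilting summand and a concentration result for graded good-filtration multiplicities, which can then be controlled by (H1) and (H2).

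First I would record the two facts that make the bookkeeping work. By Proposition \ref{null} the module $\Delta=\L_0$ is concentrated in $\Delta$-degree $0$, and by (H2) together with the regrading rule the term $T^u=\bigoplus_j T^{u,j}$ decomposes, in the ordinary grading, as a sum of summands $T_p\gsh{\gsh{-u}}$ with $h(p)=h(j)-u$. Since $\Delta$ sits in a single $\Delta$-degree, any nonzero graded homomorphism $\Delta_{j'}\to T^u\gsh v$ has image concentrated in $\Delta$-degree $-v$ (it is generated in that degree and killed by $\L_{\geq 1}$), so its socle is a simple submodule of $T^u$ living in $\Delta$-degree $-v$. Thus it suffices to control, for each indecomposable summand $T_p$, the $\Delta$-degrees at which the standard subquotients $\Delta_{j'}$ occur.

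Next I would pass to a cleaner homological formulation. Using the duality $(-)^\circ$, which fixes each $T_p$ and sends $\Delta_{j'}$ to $\nabla_{j'}$ while reversing the ordinary grading, one obtains $\Hom_{\Gr \L}(\Delta_{j'},T_p\gsh{\gsh w})\cong\Hom_{\Gr \L}(T_p,\nabla_{j'}\gsh{\gsh w})$. Since $T_p$ has a $\Delta$-filtration and $\Ext^{>0}_\L(\Delta,\nabla)=0$, the right-hand space has dimension equal to the graded filtration multiplicity $(T_p:\Delta_{j'}\gsh{\gsh w})$. The whole proposition then follows from the concentration statement
\[
(T_p:\Delta_{j'}\gsh{\gsh w})\neq 0 \ \Longrightarrow\ w=h(j')-h(p),
\]
because, tracing the image of the generator of $\Delta_{j'}$ through the regrading rule and using $h(p)=h(j)-u$, a homomorphism detected at this ordinary degree lands in $\Delta$-degree $-u$, i.e. it is exactly a $\Delta$-graded map into $T^u\gsh v$ with $v=u$; all other ordinary degrees being absent, no map with $v\neq u$ survives.

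Finally I would establish the concentration statement by induction on $h(p)$ along the tilting coresolutions supplied by (H2). For $h(p)$ minimal one has $T_p=\Delta_p=\nabla_p$ and the claim is immediate. For the inductive step I would use the short exact sequence $0\to\Delta_p\to T_p\to Z\to 0$ coming from the first map $T^{0,p}=T_p\to T^{1,p}=\bigoplus_{h(q)=h(p)-1}T_q\gsh{\gsh{-1}}$ of the coresolution: here $\Delta_p$ contributes only the factor $\Delta_p\gsh{\gsh 0}$, while $Z$ embeds in $T^{1,p}$, whose multiplicities are known by induction and carry the shift $\gsh{\gsh{-1}}$ dictated by (H2). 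The main obstacle lies precisely here, in ruling out that a given $\Delta_{j'}$ spreads across several ordinary degrees when passing from $Z$ to $T_p$. This is where (H2) is essential — equivalently, the balancedness of $\L$ it forces, together with the classical Koszulity already deduced: (H1) alone constrains the internal degrees of each $\Delta_{j'}$ but not the degrees at which the standard factors are glued inside $T_p$, whereas Koszulity makes radical layers coincide with degree, so that the height bookkeeping inherited from (H2) leaves room for only the single shift $w=h(j')-h(p)$.
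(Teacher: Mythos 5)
Your overall strategy is sound and close to the paper's in spirit: both arguments come down to locating, inside each indecomposable tilting summand $T_p$ occurring in $T^{u,j}$, the ordinary degrees at which the standard (equivalently, via $(-)^\circ$, costandard) subquotients sit, and then converting that position into a $\Delta$-degree via the regrading formula. The paper simply imports this positional fact from \cite[Proposition 3.1(ii)]{Stan}; you instead try to derive it, in the form of the concentration statement $(T_p:\Delta_{j'}\gsh{\gsh w})\neq 0\Rightarrow w=h(j')-h(p)$, by induction on $h(p)$ along the coresolutions supplied by (H2). The reductions you perform around that point (the duality step, the identity $\dim_k\Hom_{\Gr\L}(T_p,\nabla_{j'}\gsh{\gsh w})=(T_p:\Delta_{j'}\gsh{\gsh w})$, and the bookkeeping showing that concentration at $w=h(j')-h(p)$ together with $h(p)=h(j)-u$ forces $v=u$) are all correct.

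The gap is in the inductive step, which is where the whole content lies and which you do not actually prove. You identify a ``main obstacle'' (a standard factor ``spreading across several ordinary degrees when passing from $Z$ to $T_p$'') and then dispose of it by appealing to balancedness and to Koszulity ``making radical layers coincide with degree''; that is not an argument, and the obstacle you name is not the real issue. What the step needs is the additivity of the graded filtration multiplicity $(-:\Delta_{j'}\gsh{\gsh w})=\dim_k\Hom_{\Gr\L}(-,\nabla_{j'}\gsh{\gsh w})$ on short exact sequences of $\Delta$-filtered graded modules, which holds because $\Ext^1_{\Gr\L}(M,\nabla_{j'}\gsh{\gsh w})=0$ for $\Delta$-filtered $M$. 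Applied to $0\to\Delta_p\to T_p\to Z\to 0$ and to $0\to Z\to T^{1,p}\to Z'\to 0$ (where $Z$ and $Z'$ are $\Delta$-filtered, being cosyzygies in a tilting coresolution of $\Delta_p$), it gives $(T_p:\Delta_{j'}\gsh{\gsh w})=\delta_{(j',w),(p,0)}+(Z:\Delta_{j'}\gsh{\gsh w})$ and $(Z:\Delta_{j'}\gsh{\gsh w})\leq(T^{1,p}:\Delta_{j'}\gsh{\gsh w})$; since (H2) makes $T^{1,p}$ a direct sum of modules $T_q\gsh{\gsh{-1}}$ with $h(q)=h(p)-1$, the induction hypothesis concentrates the right-hand side at $w=h(j')-h(q)-1=h(j')-h(p)$, and the claim follows. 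With that replacement your proof closes, with no appeal to Koszulity or balancedness, and yields a self-contained alternative to the paper's citation of \cite{Stan}.
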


\begin{proof}
Assume $\Hom_{\Gr \L} (\Delta, T^u \gsh v) \neq 0$. Choose $i$ and $j$ such that $$\Hom_{\Gr \L} (\Delta_i, T^{u,j} \gsh v) \neq 0.$$ By (H2), the indecomposable direct summands of $T^{u,j}$ are of the form $T_s \gsh {\gsh{-u}}$ with $h(s)=h(j)-u$. An ungraded morphism from $\Delta_i$ to $T_s$ has to factor through a subfactor of $T_s$ of the form $\nabla_i$. According to \cite[Proposition 3.1.(ii)]{Stan}, the simple socle of $\nabla_i$ as a subfactor of $T_s$ is positioned in (ordinary) degree $h(s)-h(i)$. As a subfactor of $T_s \gsh {\gsh{-u}}$ it is positioned in degree $l=h(s)-h(i)-u$. Since $h(s)=h(j)-u$, we get $$\deg_\Delta (\soc \nabla_i)=\frac{l+h(i)-h(j)} 2=-u.$$ Hence, if $\Hom_{\Gr \L} (\Delta_i, T_s \gsh {\gsh{-u}} \gsh v) \neq 0$, then $u=v$, which proves the proposition.
\end{proof}

A quasi-hereditary algebra with duality is called \emph{multiplicity free} if $(P_i \colon \Delta_j)=[\Delta_j \colon S_i] \leq 1$ for all $1 \leq i,j \leq r$. The following example shows that (H1) and (H2) can hold even if $\L$ is not multiplicity free.

\begin{example}
Let $\L$ be the path algebra  $\L=kQ/I$, where $Q$ is the quiver
$$\xymatrix{1 \ar@/^0.8pc/@<1ex>[r]_\beta \ar@/^0.8pc/@<2ex>[r]^\alpha&
2  \ar@/^0.8pc/@<1ex>[l]_{\beta^\ast} \ar@/^0.8pc/@<2ex>[l]^{\alpha^\ast}}$$
and $I=\langle \rho \rangle$ is the ideal generated by the set of relations $\langle \rho \rangle=\{\alpha \alpha^\ast, \beta \alpha^\ast,\alpha \beta^\ast, \beta \beta^\ast \}$. The indecomposable standard modules are
$$\begin{array}{ccccc}\Delta_1 \colon & \xymatrix{\mathtt{S_1}}, &&
\Delta_2 \colon  & \xymatrix@!=2pt{& \mathtt{S_2} \ar@{-}[dl] \ar@{-}[dr] &\\\mathtt{S_1} && \mathtt{S_1}}.\end{array}$$
The characteristic tilting module $\mathfrak T_c=T_1 \oplus T_2$ has direct summands
$$\begin{array}{ccccc}T_1 \colon & \xymatrix{\mathtt{S_1}}, &&
T_2 \colon  & \xymatrix@!=2pt{\mathtt{S_1} \ar@{-}[dr] && \mathtt{S_1} \ar@{-}[dl]\\ & \mathtt{S_2} \ar@{-}[dl] \ar@{-}[dr] &\\\mathtt{S_1} && \mathtt{S_1}}.\end{array}$$

Define a height function by $h(1)=1$ and $h(2)=2$. Then $h$ satisfies (H1). The standard modules have graded tilting coresolutions $0 \to \Delta_1 \to T_1 \to 0$ and $$0 \to \Delta_2 \to T_2 \to T_1 \gsh{\gsh {-1}} \oplus T_1 \gsh{\gsh {-1}} \to 0,$$ so condition (H2) is also satisfied.
\end{example}

\subsection{Koszulity with respect to $\Delta$}

\begin{thm}\label{hoved}
Let $\L$ be a graded quasi-hereditary algebra with duality. Let $h$ be a height function satisfying (H1) and (H2). Regrade $\L$ according to the $\Delta$-grading. Then $\L$ is a Koszul algebra with respect to $\L_0 \simeq \Delta$.
\end{thm}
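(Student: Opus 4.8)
The plan is to apply Definition \ref{nydef} with $T = \L_0 \simeq \Delta$, the isomorphism coming from Proposition \ref{null} (which uses (H1)). Since that definition presupposes $\gldim \L_0 < \infty$, I would first check this. By Proposition \ref{null} the indecomposable projective $\L_0 e_j$ is $\Delta_j$, and the degree formula $\deg_\Delta(e_i \L_{(l)} e_j) = \frac{l + h(i) - h(j)}{2}$ forces $h(i) < h(j)$ whenever $e_i \L_0 e_j \neq 0$ with $i \neq j$. Thus the Gabriel quiver of $\L_0$ has arrows only between vertices of strictly decreasing height, so it has no oriented cycles, $\L_0$ is a directed algebra, and in particular $\gldim \L_0 < \infty$.

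Condition (i) of Definition \ref{nydef} is then immediate: $T = \L_0 = \bigoplus_j \L_0 e_j$ is the regular module, which is a basic tilting $\L_0$-module (it is projective, hence self-orthogonal, and admits the trivial coresolution $0 \to \L_0 \to \L_0 \to 0$). All the substance is therefore in condition (ii), namely the graded self-orthogonality $\Ext^i_{\Gr \L}(\Delta, \Delta \gsh j) = 0$ for $i \neq j$.

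To prove (ii) I would compute $\Ext^*_{\Gr \L}(\Delta, \Delta \gsh j)$ from the graded tilting coresolution $0 \to \Delta \to T^0 \to \ldots \to T^n \to 0$ obtained by regrading the ordinary coresolution into the $\Delta$-grading, as recorded just before Proposition \ref{vanish}. Each $T^u \in \add \mathfrak T_c$ is $\nabla$-filtered and $\Ext^{>0}_\L(\Delta, \nabla) = 0$, so $\Ext^{>0}_\L(\Delta, T^u) = 0$; since $\Delta$ is finitely generated over the finite-dimensional algebra $\L$, Lemma \ref{ugrad}(b) upgrades this to $\Ext^{>0}_{\Gr \L}(\Delta, T^u \gsh j) = 0$ for all $j$. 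Hence the coresolution consists of $\Ext^{>0}_{\Gr \L}(\Delta, -)$-acyclic objects and computes $\Ext^i_{\Gr \L}(\Delta, \Delta \gsh j)$ as the $i$th cohomology of the complex $\Hom_{\Gr \L}(\Delta, T^\bullet \gsh j)$. By Proposition \ref{vanish} (which uses (H2)) the $u$th term $\Hom_{\Gr \L}(\Delta, T^u \gsh j)$ vanishes unless $u = j$, so this complex is concentrated in cohomological degree $j$; therefore $\Ext^i_{\Gr \L}(\Delta, \Delta \gsh j) = 0$ for every $i \neq j$, which is exactly (ii).

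The main obstacle is precisely this last step, and within it the bookkeeping that (a) the regraded coresolution really is a graded coresolution of $\L_0 \simeq \Delta$ in the $\Delta$-grading, and (b) the $\Ext$-acyclicity of the $T^u$ survives the passage to graded $\Ext$ via Lemma \ref{ugrad}(b). Once these are secured, Proposition \ref{vanish} collapses the computing complex to a single cohomological degree, and the self-orthogonality, hence the $\Delta$-Koszulity, follows at once.
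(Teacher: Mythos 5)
Your proposal is correct and follows essentially the same route as the paper: both reduce condition (ii) to Proposition \ref{vanish} applied to the regraded tilting coresolution, your ``acyclic resolution computes $\Ext$'' packaging being just a repackaging of the paper's explicit dimension-shifting via the kernels $K^i$ and the surjections $\Hom_{\Gr \L}(\Delta,K^i\gsh j)\twoheadrightarrow \Ext^1_{\Gr \L}(\Delta,K^{i-1}\gsh j)$. The only cosmetic difference is that you obtain $\gldim \L_0<\infty$ from directedness of the quiver of $\L_0$, while the paper simply uses $\gldim \L_0\leq\gldim \L<\infty$; both are fine.
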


\begin{proof}
We have $\gldim \L_0 \leq \gldim \L < \infty$, and $\L_0 \simeq \Delta$ is a tilting $\L_0$-module. We only need to show that $\Ext^i_{\Gr \L}(\Delta,\Delta \gsh j)=0$ whenever $i \neq j$.

Let $0 \to \Delta \rightarrow T^0 \xrightarrow {f_0} T^1 \xrightarrow {f_1} T^2 \to \ldots \to T^n \to 0$ be a minimal graded tilting coresolution of $\Delta$. Let $K^i=\ker f_i$ for $i \geq 0$. For any $i>0$ and $j \in \Z$, since $\Ext^i_{\Gr \L}(\Delta,T \gsh j)=0$, we get from long-exact sequences the dimension shift formula $$\Ext^i_{\Gr \L}(\Delta,\Delta \gsh j) \simeq \Ext^1_{\Gr \L}(\Delta,K^{i-1} \gsh j).$$ There are also surjections $$\Hom_{\Gr \L}(\Delta,K^i \gsh j) \to \Ext^1_{\Gr \L}(\Delta,K^{i-1} \gsh j) \to 0.$$ Since $K^i$ is a submodule of $T^i$ and $\Hom_{\Gr \L} (\Delta, T^i \gsh j)=0$ whenever $i \neq j$ by Proposition \ref{vanish}, we have $\Hom_{\Gr \L} (\Delta, K^i \gsh j)=0$ whenever $i \neq j$. So $\Ext^i_{\Gr \L}(\Delta,\Delta \gsh j) \simeq \Ext^1_{\Gr \L}(\Delta,K^{i-1} \gsh j)=0$ whenever $i \neq j$.
\end{proof}

If $\L$ and $h$ are as in the above theorem, then in particular $\Delta$ is a graded self-orthogonal $\L$-module. As usual, let $\G=[\bigoplus_{i \geq 0} \Ext_{\L}^i(\Delta,\Delta)]^{\op}$. We construct the functor $G_{\Delta} \colon \mathcal D\Gr \L \to \mathcal D\Gr \G$ in the usual way described in section \ref{lift}. The indecomposable graded projective $\G$-modules concentrated in degree zero are of the form $G_{\Delta}(\Delta_i)$, $1 \leq i \leq r$.

\begin{prop}
Let $\L$ and $h$ be as in Theorem \ref{hoved}. For any $1 \leq i \leq r$,
$$G_{\Delta}(\nabla_i) \simeq {}_\G S_i,$$
where ${}_\G S_i$ is the simple top of the indecomposable graded projective $\G$-module $G_{\Delta}(\Delta_i)$.
\end{prop}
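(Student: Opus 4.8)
The plan is to compute the cohomology of $G_\Delta(\nabla_i)$ directly from Proposition \ref{rhom}(e), observe that it is one-dimensional and concentrated in cohomological degree $0$ and internal degree $0$ (so that $G_\Delta(\nabla_i)$ is a simple graded $\G$-module), and then identify which simple it is by pushing the canonical map $\Delta_i\to\nabla_i$ through $G_\Delta$. For the homological input: since $\L$ is finite dimensional, every module has a finitely generated projective resolution, so Lemma \ref{ugrad}(b) gives $\bigoplus_q\Ext^n_{\Gr\L}(\Delta,\nabla_i\gsh q)\simeq\Ext^n_\L(\Delta,\nabla_i)$. The quasi-hereditary vanishing recalled in this section says $\Ext^n_\L(\Delta_a,\nabla_i)=0$ for $n>0$, $\Hom_\L(\Delta_a,\nabla_i)=0$ for $a\neq i$, and $\Hom_\L(\Delta_i,\nabla_i)\simeq\End(S_i)=k$; hence $\Ext^n_\L(\Delta,\nabla_i)$ is $k$ for $n=0$ and $0$ for $n>0$. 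By Proposition \ref{rhom}(e) we have $(H^pG_\Delta(\nabla_i))_q\simeq\Ext^{p+q}_{\Gr\L}(\Delta,\nabla_i\gsh q)$, so the cohomology vanishes unless $p+q=0$, and the only surviving contribution is the single copy of $\Hom_\L(\Delta,\nabla_i)=k$.

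The one grading-sensitive point is that this surviving class sits in internal degree $q=0$. For this I would show that in the $\Delta$-grading the socle $S_i$ of $\nabla_i$ lies in degree $0$. The nonzero composite $\Delta_i\hookrightarrow T_i\twoheadrightarrow\nabla_i$ is degree-preserving, since the graded lifts of $\Delta_i$, $T_i$ and $\nabla_i$ are chosen compatibly and $T_i$ and $\nabla_i$ are regraded with respect to the same reference height $h(i)$; its image is precisely $\soc\nabla_i\simeq S_i$, and as $\Delta_i$ is concentrated in degree $0$ by Proposition \ref{null}, so is this image. Equivalently, a composition factor $S_j$ of $\nabla_i$ lies in $\Delta$-degree $h(j)-h(i)$, so $\soc\nabla_i=S_i$ is in degree $0$. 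Consequently $\Hom_{\Gr\L}(\Delta,\nabla_i\gsh q)$ is nonzero only for $q=0$, and therefore $(H^pG_\Delta(\nabla_i))_q$ is nonzero only for $(p,q)=(0,0)$, where it equals $k$. Thus $G_\Delta(\nabla_i)$ is a one-dimensional graded $\G$-module concentrated in degree $0$, in particular a simple $\G$-module.

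It remains to match this simple module with ${}_\G S_i=\top G_\Delta(\Delta_i)$. Since $\Delta_i$ is a direct summand of $T=\Delta$, Proposition \ref{rhom}(b) shows $\phi_{\Delta_i}\colon F_\Delta G_\Delta(\Delta_i)\to\Delta_i$ is an isomorphism, i.e. $\Delta_i\in\mathcal T$; combined with the adjunction $F_\Delta\dashv G_\Delta$ and naturality of the counit, this makes $G_\Delta\colon\Hom_{\mathcal D\Gr\L}(\Delta_i,N)\to\Hom_{\mathcal D\Gr\G}(G_\Delta\Delta_i,G_\Delta N)$ bijective for every $N$. Applying this to the generator $\iota_i$ of $\Hom_{\Gr\L}(\Delta_i,\nabla_i)=k$ shows $G_\Delta(\iota_i)\neq0$. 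As $G_\Delta(\Delta_i)$ is the indecomposable graded projective $\G$-module $P_i^\G$, and both $P_i^\G$ and $G_\Delta(\nabla_i)$ are stalk complexes in cohomological degree $0$, the map $G_\Delta(\iota_i)$ is a nonzero homomorphism of graded $\G$-modules $P_i^\G\to G_\Delta(\nabla_i)$. A nonzero map from the projective cover $P_i^\G$ of ${}_\G S_i$ onto a simple module forces that simple to be ${}_\G S_i$, giving $G_\Delta(\nabla_i)\simeq{}_\G S_i$.

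The step I expect to be the main obstacle is the internal-degree bookkeeping in the second paragraph: verifying that in the $\Delta$-grading the socle of $\nabla_i$ genuinely lands in degree $0$ (equivalently, that $\iota_i$ is degree-preserving), since this is exactly what places $G_\Delta(\nabla_i)$ in cohomological degree $0$ rather than in a nonzero shift. Once that normalization is pinned down, the remaining ingredients — the quasi-hereditary $\Ext$-vanishing and the faithfulness of $G_\Delta$ on morphisms out of $\mathcal T$ — are routine.
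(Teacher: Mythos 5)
Your argument is correct and follows essentially the same route as the paper: compute the cohomology of $G_\Delta(\nabla_i)$ via Proposition \ref{rhom}(e) together with the quasi-hereditary $\Ext$-vanishing to see that it is one-dimensional and concentrated in bidegree $(0,0)$, then identify the resulting simple module using the adjunction. The only cosmetic difference is in the last step, where you exhibit a nonzero map from $G_\Delta(\Delta_i)$ while the paper instead rules out nonzero maps from $G_\Delta(\Delta_s)$ for $s \neq i$; your explicit check that $\soc \nabla_i$ sits in $\Delta$-degree $0$ spells out a point the paper asserts without comment.
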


\begin{proof}
We have $(H^l G_{\Delta}(\nabla_i))_j \simeq \Ext^{l+j}_{\Gr \L}(\Delta,\nabla_i \gsh j)=0$ whenever $j \neq 0$ or $l \neq 0$. So $G_{\Delta}(\nabla_i) \simeq (H^0 G_{\Delta}(\nabla_i))_0 \simeq \Hom_{\Gr \L}(\Delta,\nabla_i) \simeq \Hom_{\Gr \L}(\Delta_i,\nabla_i)$, which is $1$-dimensional as a $k$-vector space. If $i \neq s$, then $$\Hom_{\mathcal D\Gr \G}(G_{\Delta}(\Delta_s),G_{\Delta}(\nabla_i)) \simeq \Hom_{\mathcal D\Gr \L}(\Delta_s,\nabla_i)=0,$$ so we must have $G_{\Delta}(\nabla_i) \simeq \top G_{\Delta}(\Delta_i)$.
\end{proof}

When we know that $\L$ is Koszul with respect to $\Delta$, we can apply all the $T$-Koszul machinery developed earlier in this paper and get a surprisingly clear statement of the duality theory. According to Theorem \ref{kosdua}, the $\Delta$-grading and the $\Ext$-grading coincide, so the results can be stated without reference to the $\Delta$-grading.

\begin{cor}\label{gqh}
Let $\L$ be a graded quasi-hereditary algebra with duality admitting a height function $h$ satisfying (H1) and (H2). Let $$\G=[\bigoplus_{i \geq 0} \Ext_{\L}^i(\Delta,\Delta)]^{\op}.$$ Then $$\L \cong [\bigoplus_{i \geq 0} \Ext_{\G}^i(D\Delta,D\Delta)]^{\op}$$ as ungraded algebras. Furthermore, if $\L$ and $\G$ are given the $\Ext$-grading, then there is an equivalence of triangulated categories $\mathcal D^b(\gr \L) \to \mathcal D^b(\gr \G)$ which restricts to an equivalence $\mathcal F_{\gr \L}(\Delta) \to \mathcal L^b(\G).$
\end{cor}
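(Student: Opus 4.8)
The plan is to recognize the corollary as the specialization of the general $T$-Koszul duality theorems to the pair $(\L,\Delta)$, once Theorem \ref{hoved} has placed us in the Koszul situation. First I would invoke Theorem \ref{hoved}: after passing to the $\Delta$-grading, $\L$ is a Koszul algebra with respect to $\L_0 \simeq \Delta$, so $\Delta$ is simultaneously a tilting $\L_0$-module and a graded self-orthogonal $\L$-module. By Corollary \ref{iso} the ungraded extension algebra $\G=[\bigoplus_{i \geq 0}\Ext_\L^i(\Delta,\Delta)]^{\op}$, endowed with the $\Ext$-grading, agrees with the graded extension algebra $[\bigoplus_{i \geq 0}\Ext^i_{\Gr \L}(\Delta,\Delta \gsh i)]^{\op}$. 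Theorem \ref{kosdua}(b), applied with $T=\Delta$, then yields a graded isomorphism $\L \cong [\bigoplus_{i \geq 0}\Ext_\G^i(D\Delta,D\Delta)]^{\op}$; forgetting the grading gives the first assertion, and the graded statement shows that the $\Delta$-grading on $\L$ is precisely the $\Ext$-grading coming from $\G$.

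For the derived equivalence I would verify the three hypotheses of Theorem \ref{derequiv} for the pair $(\L,\G)$. Since every quasi-hereditary algebra has finite global dimension, $\gldim \L < \infty$, and because $\L$ is finite-dimensional only finitely many groups $\Ext_\L^i(\Delta,\Delta)$ are nonzero, each of finite dimension; hence $\G$ is finite-dimensional, in particular artinian and noetherian, while $\L$ is trivially artinian. The remaining hypothesis, $\gldim \G < \infty$, is the one that requires genuine work and is the main obstacle.

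To establish $\gldim \G < \infty$ I would resolve the simple $\G$-modules through $G_\Delta$. By the proposition preceding the corollary, the simple $\G$-modules are exactly ${}_\G S_i \simeq G_\Delta(\nabla_i)$ for $1 \leq i \leq r$. As $\gldim \L < \infty$, each $\nabla_i$ admits a finite graded projective $\L$-resolution $0 \to P^{-s} \to \cdots \to P^0 \to \nabla_i \to 0$, and every projective $\L$-module carries a graded $\Delta$-filtration by the quasi-hereditary structure, so each $P^{-u}$ lies in $\mathcal F_{\gr \L}(\Delta)$. Theorem \ref{delta} sends $\mathcal F_{\gr \L}(\Delta)$ into $\mathcal L^b(\G)$, whence each $G_\Delta(P^{-u})$ is a bounded complex of projective $\G$-modules. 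Since $G_\Delta$ is triangulated and the resolution is finite, $G_\Delta(\nabla_i) \simeq {}_\G S_i$ is realized as a bounded complex of projective $\G$-modules, so $\pd_\G {}_\G S_i < \infty$. As there are only finitely many simples, this gives $\gldim \G < \infty$.

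With all hypotheses in place, Theorem \ref{derequiv} provides the equivalence of triangulated categories $G_\Delta^b \colon \mathcal D^b(\gr \L) \to \mathcal D^b(\gr \G)$, and Theorem \ref{delta} is exactly the assertion that it restricts to an equivalence $\mathcal F_{\gr \L}(\Delta) \to \mathcal L^b(\G)$, completing the proof.
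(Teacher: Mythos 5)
Your proposal is correct and follows the paper's overall architecture (Theorem \ref{hoved} to enter the $T$-Koszul setting, then Theorems \ref{kosdua}, \ref{derequiv} and \ref{delta} to conclude), but you handle the one step requiring genuine work, namely $\gldim \G < \infty$, by a different argument. The paper computes directly with simples: using $G_\Delta(\nabla_i) \simeq {}_\G S_i$ and the adjunction it identifies $\Ext^i_{\Gr \G}(S_u, S_v \gsh j)$ with $\Hom_{\mathcal D \Gr \L}(\nabla_u, \nabla_v \gsh{-j}[i-j])$, which vanishes for all but finitely many $(i,j)$ because $\L$ is finite dimensional of finite global dimension; this is short and uses nothing beyond Proposition \ref{fincog} and the adjunction. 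You instead realize each ${}_\G S_i \simeq G_\Delta(\nabla_i)$ as a perfect complex by pushing a finite projective $\L$-resolution of $\nabla_i$ through $G_\Delta$, using that projectives lie in $\mathcal F_{\gr \L}(\Delta)$ and Theorem \ref{delta}. This works, and it has the merit of exhibiting explicit finite projective $\G$-resolutions of the simples rather than just a vanishing statement; but it rests on an assertion you state without proof, namely that the graded projective $\L$-modules, in the $\Delta$-grading, belong to $\mathcal F_{\gr \L}(\Delta)$. That requires knowing that the $\Delta$-filtrations of the $P_i$ admit graded lifts and that these remain filtrations by graded shifts of summands of $\Delta$ after the regrading $\deg_\Delta(e_i \L_{(l)} e_j) = (l + h(i) - h(j))/2$; the compatibility does hold (a submodule homogeneous for the original grading is homogeneous for the bigrading by idempotent and degree, hence for the $\Delta$-grading, and condition (H1) forces each subfactor $\Delta_{j_s}$ to be concentrated in a single $\Delta$-degree), but you should supply this verification, since it is exactly the kind of regrading subtlety the paper is careful about in Propositions \ref{null} and \ref{vanish}. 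The paper's route avoids the issue entirely, which is presumably why it was chosen.
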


\begin{proof}
According to Theorem \ref{hoved}, the algebra $\L$ with the $\Delta$-grading is Koszul with respect to $\Delta$. Let $S_u$ and $S_v$ be two simple $\G$-modules. Since $\dim_k \L < \infty$ and $\gldim \L < \infty$, the groups
\begin{eqnarray*}
\Ext_{\Gr \G}^i(S_u,S_v \gsh j)&=&\Hom_{\mathcal D \Gr \G}(S_u,S_v \gsh j[i])\\
&\simeq& \Hom_{\mathcal D \Gr \L}(\nabla_u,\nabla_v \gsh {-j}[i-j])
\end{eqnarray*}
are non-zero only for finitely many values of $i$ and $j$. So $\gldim \G < \infty$. Since $\Delta \in \mathcal D^b(\gr \L)$, the category $\mathcal F_{\gr \L}(\Delta)$ is a subcategory of $\mathcal D^b(\gr \L)$. The rest follows from Theorems \ref{kosdua}, \ref{derequiv} and \ref{delta}.
\end{proof}

This corollary should be compared with Theorem 4.1 in \cite{Stan}. We believe our treatment of Koszul duality is more uniform and straightforward. It should be noted however that we assume the presence of a duality functor$(-)^\circ$, an assumption not made in \cite{Stan}. When $\L$ is a graded quasi-hereditary algebra with duality, then the conditions (I)--(IV) in \cite{Stan} imply (H1) and (H2).

It is worth writing out the statement for an important class of algebras considered in \cite{Stan}, namely the algebras with module categories equivalent to blocks of the BGG category $\mathcal O$. Category $\mathcal O$ is the category of certain representations of a semi-simple Lie algebra (\cite{Hum} is a textbook reference). The algebras arising in this way are known to be quasi-hereditary with duality. In this context the standard modules are the same as the so-called Verma modules. The height function can be defined with the help of the Weyl group. If such an algebra is multiplicity free, the conditions (H1) and (H2) are satisfied \cite[proof of Theorem 5.1]{Stan}.

\begin{cor}\label{bgg}
Let $A_\lambda$ be a basic algebra corresponding to a block $\mathcal O_\lambda$ of category $\mathcal O$ for a semi-simple Lie algebra.  Let $\Delta$ denote the direct sum of all Verma modules in $\mathcal O_\lambda$. Let $E_\lambda=[\bigoplus_{i \geq 0} \Ext_{A_\lambda}^i(\Delta,\Delta)]^{\op}$. Suppose $A_\lambda$ is multiplicity free. Then $A_\lambda \cong [\bigoplus_{i \geq 0} \Ext_{E_\lambda}^i(D \Delta,D \Delta)]^{\op}$ as ungraded algebras. Furthermore, if $A_\lambda$ and $E_\lambda$ are given the $\Ext$-grading, then there is an equivalence of triangulated categories $\mathcal D^b (\gr A_\lambda) \rightarrow \mathcal D^b (\gr E_\lambda)$ which restricts to an equivalence $\mathcal F_{\gr A_\lambda}(\Delta) \to \mathcal L^b(E_\lambda)$.
\end{cor}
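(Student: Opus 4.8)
The plan is to recognize Corollary \ref{bgg} as a direct specialization of Corollary \ref{gqh}. Once we know that the basic algebra $A_\lambda$ of a block $\mathcal O_\lambda$ is a graded quasi-hereditary algebra with duality admitting a height function $h$ satisfying (H1) and (H2), and that the standard modules for this quasi-hereditary structure are exactly the Verma modules, then all three assertions in the statement — the ungraded isomorphism $A_\lambda \cong [\bigoplus_{i \geq 0} \Ext_{E_\lambda}^i(D\Delta,D\Delta)]^{\op}$, the triangulated equivalence $\mathcal D^b(\gr A_\lambda) \to \mathcal D^b(\gr E_\lambda)$ for the $\Ext$-grading, and its restriction to $\mathcal F_{\gr A_\lambda}(\Delta) \to \mathcal L^b(E_\lambda)$ — are read off verbatim from Corollary \ref{gqh}. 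Thus the entire task reduces to verifying the hypotheses of that corollary for $A_\lambda$.

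First I would assemble the structural input. That a block of $\mathcal O$ is quasi-hereditary with the Verma modules as its standard modules is classical (see \cite{Hum}), and the contravariant duality on $\mathcal O$ fixing the simple modules supplies the duality functor $(-)^\circ$, so $A_\lambda$ is a quasi-hereditary algebra with duality in the sense of \cite{Bgg}. To place ourselves in the graded framework of section 5 I would invoke the known Koszulity of blocks of $\mathcal O$, which equips $A_\lambda$ with a positive grading $A_\lambda=\bigoplus_{i \geq 0}(A_\lambda)_{(i)}$ with $(A_\lambda)_{(0)} \cong k^{\times r}$, generated in degree $1$, and with respect to which $(-)^\circ$ lifts to a graded duality behaving on simples as required. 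The height function $h$ is then defined combinatorially via the length function on the relevant orbit of the Weyl group, encoding the position of each Verma module in the Bruhat order.

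The only genuinely nontrivial step — and the main obstacle — is checking conditions (H1) and (H2) for this $h$. Condition (H1) demands that the graded composition factors of each $\Delta_j$ lie in the degrees prescribed by $h$, while (H2) demands that the minimal graded tilting coresolution of $\Delta_j$ be assembled from summands $T_i \gsh{\gsh{-u}}$ with $h(i)=h(j)-u$. Here the multiplicity-free hypothesis is precisely what forces the graded Verma and tilting filtration data to be governed tightly enough by the Weyl group combinatorics for both conditions to hold. Rather than reprove this from scratch, I would appeal to the verification in \cite[proof of Theorem 5.1]{Stan}, where conditions (I)--(IV) are checked in the multiplicity-free case; as observed after Corollary \ref{gqh}, for a graded quasi-hereditary algebra with duality these conditions (I)--(IV) imply (H1) and (H2). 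With (H1) and (H2) established, Corollary \ref{gqh} applies directly, the $\Delta$-grading and $\Ext$-grading coinciding by Theorem \ref{kosdua}, and all the claimed statements follow.
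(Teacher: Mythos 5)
Your proposal is correct and matches the paper's own treatment: the corollary is deduced as a direct specialization of Corollary \ref{gqh}, with the quasi-hereditary structure, duality, grading, and Weyl-group height function supplied by standard facts about blocks of category $\mathcal O$, and with (H1) and (H2) in the multiplicity-free case taken from the proof of Theorem 5.1 in \cite{Stan}. Nothing essential differs from the paper's argument.
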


\end{document}